\theoremstyle{plain}
\newtheorem{theorem}{Theorem}[section]
\newtheorem{proposition}{Proposition}[section]
\newtheorem{corollary}{Corollary}[section]
\newtheorem{lemma}{Lemma}[section]
\theoremstyle{definition}
\newtheorem{remark}{Remark}[section]
\numberwithin{equation}{section}
\newcommand{\argmax}{\mathop{\mathrm{argmax}}}
\newcommand{\bbP}{\mathbb{P}}
\newcommand{\bbR}{\mathbb{R}}
\begin{document}

\title{Maximum likelihood estimators based on the block maxima method}
\author{ Cl\'ement Dombry\\
Univ. Bourgogne Franche-Comt\'e,\\ Laboratoire de Math\'ematiques de Besan\c con,\\
 UMR CNRS 6623, 16 route de Gray,
25030 Besan\c con cedex, France.\\
 Email: clement.dombry@univ-fcomte.fr\\[.5cm]\mbox{ } and \\[.5cm]
Ana Ferreira$^*$\\
Instituto Superior T\'ecnico,\\ Universidade de Lisboa,\\ Av. Rovisco Pais 1049-001 Lisboa, Portugal.\\ Email: anafh@tecnico.ulisboa.pt}
\maketitle
\abstract{The extreme value index is a fundamental parameter in univariate Extreme Value Theory (EVT). It captures the tail behavior of a distribution and is central in the extrapolation beyond observed data. Among other semi-parametric methods (such as the popular Hill's estimator), the Block Maxima (BM) and Peaks-Over-Threshold (POT) methods are widely used for assessing the extreme value index and related normalizing constants. We provide asymptotic theory for the maximum likelihood estimators (MLE) based on the BM method. Our main result is the asymptotic normality of the MLE with a non-trivial  bias depending on the extreme value index and on the so-called second order parameter. Our approach combines asymptotic expansions of the likelihood process and of the empirical quantile process of block maxima.
The results permit to complete the comparison of most common semi-parametric estimators in EVT (MLE and probability weighted moment estimators based on the POT or BM methods) through their asymptotic variances, biases and optimal mean square errors.}

\vspace{0.5cm} \noindent {\bf Key words}: asymptotic
normality, block maxima method, extreme value index,
maximum likelihood estimator, peaks-over-threshold method, probability weighted moment estimator.

\noindent
{\bf AMS Subject classification}: 62G32, 62G20, 62G30.

\section{Introduction}

The Block Maxima (BM) method, also known as Annual Maxima, after
Gumbel \cite{G58}  is a fundamental method in Extreme Value Theory and has been
widely used. The method is justified under the Maximum Domain of Attraction (MDA) condition:
for an independent and identically distributed (i.i.d.) sample with distribution function  $F$, 
if the linearly normalized partial maxima converges in distribution, then the limit must be a 
Generalized Extreme Value (GEV) distribution. 
In practice, one rarely exactly knows $F$ but the MDA condition holds for most common 
continuous distributions. 

In the BM method, the initial sample is divided into blocks of the same size  
and the MDA condition ensures that the block maxima are approximately GEV distributed. 
The method is commonly used in hydrology and other environmental applications or in insurance and finance when analysing extremes - see e.g. the monographs by Embrechts et al. \cite{EKM97}, Coles \cite{C01}, Beirlant et al. \cite{BGST04}, de Haan and Ferreira \cite{dHF06} and references therein.

The GEV is a three parameter distribution, with the usual
location and scale parameters, and the extreme value index being the
main parameter as it characterizes the heaviness of the tail. Several estimation methods have
been proposed, including the classical maximum likelihood (ML) and
probability weighted moments (PWM) estimators (Hosking \textit{et al.} \cite{HWW85}). 
The asymptotic study of these estimators has been established for a sample from the GEV distribution and asymptotic normality holds 
with null bias and explicit variance (Prescott and Walden \cite{PW80}, Hosking \textit{et al.} \cite{HWW85}, 
Smith \cite{S85}, B\"ucher and Segers \cite{BS16b}). The theory is made quite difficult and technical by the fact that the support of 
the GEV is varying with respect to its parameters. Regularity in quadratic mean of the GEV model has been proven only recently by B\"ucher and Segers \cite{BS16b} and we provide here a different and somewhat simpler proof (cf. Proposition~\ref{prop:DQM}).

However, in applications, the sample block maxima are only approximately GEV so that the
classical parametric theory suffers from model misspecification. 
In this paper, we intend to fill this gap for ML estimators (MLE), by showing asymptotic normality under 
a flexible second order condition (a refinement of the MDA condition). 
Depending on the asymptotic block size, a non trivial bias may appear in the limit for which we provide an exact expression. 
Recently Ferreira and de Haan \cite{FdH15} showed asymptotic normality of the PWM estimators under the same
conditions. They derived a uniform expansion for the empirical quantile of block maxima that is a crucial tool in our approach as well.
Indeed, the MLE can be seen as a maximizer of the so-called likelihood process. Expressing the likelihood process in terms of this empirical quantile process, 
we are able to derive an expansion of the likelihood process that implies the asymptotic normality of the MLE. This derivation is again made quite technical by the fact that the support of the GEV is varying. Note that the asymptotic normality for the MLE of a Fr\'echet distribution based on the block maxima of a stationary heavy-tailed time series has been obtained by B\"ucher and Segers \cite{BS16c}. There the issue of parameter dependent supports is avoided but time dependence has to be dealt with. Besides, the ideas underlying their proof are quite different.

The asymptotic normality result in the present paper brings novel results to the theoretical comparison of the main semi-parametric estimation procedures in EVT.
On the one hand it permits to compare BM and Peaks-over-Threshold (POT) methods (see e.g. Balkema and de Haan \cite{BdH74}, Pickands \cite{P75}), the latter being another fundamental method in EVT and concurrent with BM. We discuss and compare the four different approaches -- MLE/PWM estimators in the BM/POT approaches -- based on exact theoretical formulas for asymptotic variances, biases and optimal mean square errors depending on the extreme value index and the second order parameter. It turns out that MLE under BM has minimal asymptotic variance among all combinations MLE/PWM and BM/POT but, on the other hand it has some significant asymptotic bias. When analysing the asymptotic optimal mean square error that balances variance and bias, the most efficient combination turns out to be MLE under POT (e.g. Drees, Ferreira and de Haan 2004). It turns out that the optimal sample size is larger for POT-MLE than for BM-MLE, giving a theoretical justification to the heuristic that  POT allows for a better use of the data than BM.

The outline of the paper is as follows: In Section 2 we present the main theoretical conditions and results including Theorem \ref{theo2} giving the asymptotic normality of the MLE. In Section 3 we present a comparative study of asymptotic variances and biases, optimal asymptotic mean square errors and optimal samples sizes among all combinations MLE/PWM and BM/POT. In Section 4 we state additional theoretical statements, including the local asymptotic normality of MLE under the fully parametric GEV model, and provide all the proofs. Finally, Appendix A gathers some formulas for the information matrix and for the bias of BM-MLE and  Appendix B provides useful bounds for the derivatives of the likelihood function that are necessary for the main proofs.

\section{Asymptotic behaviour of MLE}

\subsection{Framework and notations}
The GEV distribution with index $\gamma$ is defined by
\[
G_\gamma(x)=\exp\left(-(1+\gamma x)^{-1/\gamma}\right),\quad 1+\gamma x>0,
\]
 and the corresponding log-likelihood  by
\begin{equation}\label{eq:defggamma}
g_\gamma(x)=\left\{ \begin{array}{l}-(1+1/\gamma)\log(1+\gamma
x)-(1+\gamma x)^{-1/\gamma}\quad \mbox{if}\ 1+\gamma x>0 \\ -\infty
\quad\mbox{otherwise}.\end{array}\right.
\end{equation}
For $\gamma=0$, the formula is interpreted as  $g_0(x)=- x-\exp(-x)$.
The three parameter model with index  $\gamma$, location $\mu$ and scale $\sigma>0$ is defined by the log-likelihood
\begin{equation}\label{eq:defGEV}
 \ell(\theta,x)=g_\gamma\left(\frac{x-\mu}{\sigma}\right)-\log\sigma,\quad \theta=(\gamma,\mu,\sigma).
\end{equation}

A distribution $F$ is said to belong to the max-domain of attraction of the extreme value distribution $G_{\gamma_0}$, denoted by $F\in D(G_{\gamma_0})$, if there exist normalizing sequences $a_m>0$ and $b_m$
such that
\[
\lim_{m\to+\infty} F^m(a_mx+b_m)=G_{\gamma_0}(x),\quad \mbox{for all } x\in\mathbb{R}.
\]
The main aim of the BM method is to estimate the extreme value index $\gamma$ as well as the normalizing constants $a_m$ and $b_m$. The set-up is the following.
Consider  independent and identically distributed (i.i.d.)  random variables $(X_i)_{i\geq 1}$ with common distribution function $F\in D(G_{\gamma_0})$. Divide
the sequence $(X_i)_{i\geq 1}$  into blocks of length $m\geq 1$ and define the $k$-th block maximum by
\begin{equation}\label{eq:def_bmax}
M_{k,m}=\max_{(k-1)m<i\leq km}X_{i},\quad k\geq 1.
\end{equation}
For each $m\geq 1$, the variables $(M_{k,m})_{k\geq 1}$ are i.i.d.
with distribution function $F^m$ and by the max-domain of attraction
condition
\begin{equation}\label{eq:convBM}
\frac{M_{k,m}-b_m}{a_m}\stackrel{d}\longrightarrow G_{\gamma_0}\quad
\mbox{as}\ m\to +\infty.
\end{equation}
This suggests that the distribution of
$M_{k,m}$ is \emph{approximately} a GEV distribution with parameters
$(\gamma_0,b_m,a_m)$. The method consists in pretending that the
sample follows \emph{exactly} the GEV distribution and in maximizing the
GEV log-likelihood so as to compute the MLE. 
A particular feature of the method is that the model is
clearly misspecified since the GEV distribution appears as the limit
distribution of the block maxima as the block size $m$ tends to
$+\infty$ while in  practice we have to use a finite block size. As seen afterwards,  we quantify the misspecification thanks to the so-called second order condition that implies an asymptotic expansion of the empirical quantile process with a non trivial bias term. When plugging this expansion in the ML equations, we obtain a bias term for the likelihood process as well as for the MLE.

The (misspecified) log-likelihood of the $k$-sample  $(M_{1,m},\ldots,M_{k,m})$ is
\begin{equation}\label{eq:def_llp}
L_{k,m}(\theta)=\sum_{i=1}^k  \ell(\theta,M_{i,m}),\quad \theta=(\gamma,\mu,\sigma)\in \Theta=\bbR\times\bbR\times(0,+\infty).
\end{equation}
We say that an estimator $\widehat\theta_n=(\widehat\gamma_n,\widehat\mu_n,\widehat\sigma_n)$ is a MLE if it solves the score equations
\[
\left\{
 \begin{array}{lll}
  \frac{\partial}{\partial \gamma}L_{k,m}(\gamma,\mu,\sigma)&=&0\\
\frac{\partial}{\partial \mu}L_{k,m}(\gamma,\mu,\sigma)&=&0\\
\frac{\partial}{\partial \sigma}L_{k,m}(\gamma,\mu,\sigma)&=&0,\\
  \end{array}
 \right.
\]
which we write shortly in vectorial notation
\begin{equation}\label{eq:score}
\frac{\partial L_{k,m}}{\partial\theta} (\theta)=0.
\end{equation}
A main purpose of this paper is to study the existence and asymptotic normality of the MLE under the following conditions:
\begin{itemize}
	\item First order condition:
\[
F\in D(G_{\gamma_0})\quad \mbox{with } \gamma_0>-\frac{1}{2}.
\]
Note that the also called first order condition \eqref{eq:convBM} is equivalent
to
\[
 \lim_{m\to\infty} \frac{V(mx)-V(m)}{a_m}=\frac{x^{\gamma_0}-1}{\gamma_0},\quad x>0,
\]
with  $V=(-1/\log F)^{\leftarrow}$. W.l.g., we can take $b_m=V(m)$ in Equation~\ref{eq:convBM}, what we shall assume in the following.
\item Second order condition:  for some positive function $a$ and some positive or negative function $A$ with $\lim_{t\to\infty} A(t)=0$,
\begin{equation}\label{eq:2order}
\lim_{t\to\infty}\frac{\frac{V(tx)-V(t)}{a(t)}-\frac{x^{\gamma_0}-1}{\gamma_0}}{A(t)}=\int_1^x
s^{\gamma_0-1}\int_1^s u^{\rho-1}duds=H_{\gamma_0,\rho}(x),\quad x>0,
\end{equation}
with $\gamma_0>-\frac{1}{2}$. Note that necessarily
$\rho\leq 0$ and $|A|$ is regularly varying with index $\rho$.
\item Asymptotic growth for the number $k$ of blocks and the block size $m$: 
\begin{equation}\label{eq:blocksize}
 k=k_n\to \infty,\quad m=m_n\to\infty\ \mbox{and}\ \sqrt{k}A(m)\to\lambda\in\mathbb{R},\quad \mbox{as } n\to\infty.
\end{equation}
\end{itemize}

\subsection{Main results}\label{sec:main}
Before considering the MLE, we focus on the
asymptotic properties of the likelihood and score processes. For the
purpose of asymptotic we introduce the \emph{local parameter}
$h=(h_1,h_2,h_3)\in\bbR^3$:
\begin{equation}\label{def:gmusigmah}
\left\{
\begin{array}{lll}
h_1&=&\sqrt k\left(\gamma-\gamma_0\right)\\
h_2&=&\sqrt k(\mu-b_m)/a_m\\
h_3&=&\sqrt k\left(\sigma/a_m-1\right) \\
\end{array}
\right.
\quad \Leftrightarrow\quad 
 \left\{
 \begin{array}{lll}
 \gamma&=&\gamma_0+ h_1/\sqrt{k}\\
 \mu&=&b_m+a_mh_2/\sqrt{k}\\
 \sigma&=& a_m(1+h_3/\sqrt{k}). 
 \end{array}
 \right.
\end{equation}
Set $\theta_0=\left(\gamma_0,0,1\right)$. The  \emph{local log-likelihood process} at $\theta_0$ is given by
\begin{eqnarray}
\widetilde L_{k,m}(h)&=&L_{k,m}\left(\gamma_0+ \frac{h_1}{\sqrt{k}},b_m+ a_m \frac{h_2}{\sqrt{k}},  a_m+a_m\frac{h_3}{\sqrt{k}}\right)\nonumber\\
&=&\sum_{i=1}^k  \ell\left(\theta_0+\frac{h}{\sqrt{k}},\frac{M_{i,m}-b_m}{a_m}\right)-k\log(a_m),\label{eq:def_lllp}
\end{eqnarray}
and, the \emph{local score process} by
\begin{eqnarray}
\frac{\partial \widetilde L_{k,m}}{\partial h}(h)&=&\frac{1}{\sqrt{k}}\sum_{i=1}^k \frac{\partial \ell}{\partial \theta} \left(\theta_0+\frac{h}{\sqrt{k}},\frac{M_{i,m}-b_m}{a_m}\right)\nonumber\\
&=&\frac{1}{\sqrt{k}}\frac{\partial L_{k,m}}{\partial
\theta}(\theta).  \label{eq:def_lsp}
\end{eqnarray}
Clearly, the score equation \eqref{eq:score} rewrites in this new variable as
$\frac{\partial \widetilde L_{k,m}}{\partial h}(h)=0$. 

In the following, $Q_{\gamma_0}$ denote  the quantile function of the extreme
value distribution $G_{\gamma_0}$, i.e.
\begin{equation}\label{eq:def_qp}
Q_{\gamma_0}(s)=\frac{(-\log s)^{-\gamma_0}-1}{\gamma_0},\quad
s\in(0,1).
\end{equation}

\begin{proposition}\label{prop:concave}
Assume conditions \eqref{eq:2order} and \eqref{eq:blocksize}. Let $r=r_n\to\infty$ be a sequence of positive numbers verifying, as $n\to\infty$,
\begin{equation}\label{eq:condition_rn}
r_n=O(k_n^\delta)\qquad  \mbox{with}\quad 0<\delta<\min(1/2,\gamma_0+1/2).
\end{equation}
Let $H_n\subset\mathbb{R}^3$ be the ball of center $0$ and radius $r_n$.
Then, uniformly for $h\in H_n$,
\begin{equation}\label{eq:theo1.3}
\frac{\partial^2 \widetilde L_{k,m}}{\partial h\partial h^T} (h)=-I_{\theta_0}+o_P(1)
\end{equation}
with $I_{\theta_0}$ the Fisher information matrix
\begin{equation}\label{eq:info}
I_{\theta_0}=-\int_0^1 \frac{\partial^2 \ell}{\partial \theta \partial\theta^T}\left(\theta_0,Q_{\gamma_0}(s)\right)ds.
\end{equation}
As a consequence,  the local log-likelihood process $\widetilde L_{k,m}$ is strictly concave on $H_n$ with high probability.
\end{proposition}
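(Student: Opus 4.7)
The plan is to establish \eqref{eq:theo1.3} as a uniform weak law of large numbers for the Hessian on $H_n$ and then deduce strict concavity from the positive definiteness of $I_{\theta_0}$. Set $Y_{i,m}=(M_{i,m}-b_m)/a_m$. Differentiating \eqref{eq:def_lllp} twice yields
\[
\frac{\partial^2 \widetilde L_{k,m}}{\partial h\,\partial h^T}(h) = \frac{1}{k}\sum_{i=1}^k \frac{\partial^2 \ell}{\partial \theta\,\partial \theta^T}\!\Bigl(\theta_0+\frac{h}{\sqrt k},\,Y_{i,m}\Bigr),
\]
so the natural decomposition is $\frac{\partial^2 \widetilde L_{k,m}}{\partial h\,\partial h^T}(h)+I_{\theta_0}=E_n+R_n(h)$, with pointwise error $E_n:=\frac{\partial^2 \widetilde L_{k,m}}{\partial h\,\partial h^T}(0)+I_{\theta_0}$ and uniform remainder $R_n(h):=\frac{\partial^2 \widetilde L_{k,m}}{\partial h\,\partial h^T}(h)-\frac{\partial^2 \widetilde L_{k,m}}{\partial h\,\partial h^T}(0)$. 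I would control these two pieces separately.

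For $E_n$, the sample $(Y_{i,m})_{i\le k}$ is i.i.d.\ with distribution converging weakly to $G_{\gamma_0}$. The second order condition \eqref{eq:2order} combined with the probability integral transform, exactly as used in the empirical quantile expansion of Ferreira and de Haan~\cite{FdH15}, couples this sample with an i.i.d.\ $G_{\gamma_0}$-sample $(Z_i)_{i\le k}$ at a uniform rate $A(m)=O(k^{-1/2})$. Using the mixed-derivative bounds of Appendix~B, a Taylor expansion of $\partial^2\ell/\partial\theta\partial\theta^T$ in its second argument then gives
\[
\frac{1}{k}\sum_{i=1}^k \frac{\partial^2\ell}{\partial\theta\,\partial\theta^T}(\theta_0,Y_{i,m}) = \frac{1}{k}\sum_{i=1}^k \frac{\partial^2\ell}{\partial\theta\,\partial\theta^T}(\theta_0,Z_i) + o_P(1),
\]
and the classical law of large numbers applied to $(Z_i)$ yields $E_n=o_P(1)$; the restriction $\gamma_0>-1/2$ is exactly what makes the Fisher information integral \eqref{eq:info} finite. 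For $R_n(h)$, a first-order Taylor expansion in $\theta$ at $\theta_0$ gives
\[
\sup_{h\in H_n}\|R_n(h)\| \le \frac{r_n}{\sqrt k}\cdot \frac{1}{k}\sum_{i=1}^k \Phi(Y_{i,m}),
\]
where $\Phi$ is an envelope for the third derivatives in $\theta$ of $\ell$ on a shrinking neighbourhood of $\theta_0$, as provided by Appendix~B. Since $E_{G_{\gamma_0}}[\Phi(Z)]<\infty$ under $\gamma_0>-1/2$, the same coupling yields $k^{-1}\sum_i\Phi(Y_{i,m})=O_P(1)$, whence $\sup_{h\in H_n}\|R_n(h)\|=O_P(r_n/\sqrt k)=o_P(1)$. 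Combined with $E_n=o_P(1)$ this proves \eqref{eq:theo1.3}.

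The main obstacle, and the source of the technical restriction $\delta<\gamma_0+1/2$, is that the admissible region of $g_\gamma$ depends on $\gamma$: the Taylor expansions above only make sense on the event that, for every $h\in H_n$, the whole sample $Y_{1,m},\dots,Y_{k,m}$ lies in $\{1+\gamma(y-\mu)/\sigma>0\}$. When $\gamma_0<0$, the intersection of these regions over $\|h\|\le r_n$ takes the form $\{y<-1/\gamma_0-\eta_n\}$ with $\eta_n$ of order $r_n/\sqrt k=k^{\delta-1/2}$; a standard EVT estimate yields $P(\max_{i\le k}Y_{i,m}\ge -1/\gamma_0-\eta_n)=O(k\eta_n^{-1/\gamma_0})=O(k^{1+(\delta-1/2)/\gamma_0})$, which tends to zero precisely when $\delta<\gamma_0+1/2$. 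The corresponding lower-boundary event when $\gamma_0>0$ has exponentially small probability and does not constrain $\delta$ beyond $\delta<1/2$. On this \emph{good event} the previous expansions are uniformly valid. Strict concavity is then immediate: by the quadratic mean differentiability of the GEV family (Proposition~\ref{prop:DQM}), $I_{\theta_0}$ is positive definite, $-I_{\theta_0}$ is negative definite, and the uniform convergence \eqref{eq:theo1.3} forces the Hessian to be negative definite on $H_n$ with probability tending to one.
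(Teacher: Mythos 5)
Your overall architecture (empirical average of the Hessian over the block maxima, a pointwise term $E_n$ plus a Lipschitz-in-$h$ remainder $R_n$, and a ``good event'' taking care of the parameter-dependent support) is a reasonable alternative to the paper's route, which instead rewrites the Hessian as $\int_0^1\frac{\partial^2\ell}{\partial\theta\partial\theta^T}(\theta_0+h/\sqrt k,Q_{k,m}(s))\,ds$, splits off the boundary ranges $[0,1/k]$ and $[(k-1)/k,1]$, and controls the middle range through the weighted quantile expansion \eqref{eq:prop1} and the uniform bounds of Propositions~\ref{prop2} and~\ref{prop3}. However, your argument has a genuine gap at its central step: the claim that $E_{G_{\gamma_0}}[\Phi(Z)]<\infty$ for an envelope $\Phi$ of the third $\theta$-derivatives whenever $\gamma_0>-1/2$ is false. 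Writing $z=(1+\gamma_0 x)^{-1/\gamma_0}$ (which is standard exponential under $G_{\gamma_0}$), the third derivatives genuinely contain terms of order $z^{3\gamma_0}$ (e.g.\ $\partial^3\ell/\partial\mu^3=2\gamma_0^2(1+\gamma_0)z^{3\gamma_0}\sigma^{-3}+\dots$), and $E[z^{3\gamma_0}]=\Gamma(1+3\gamma_0)=\infty$ for $\gamma_0\in(-1/2,-1/3]$. Hence $k^{-1}\sum_i\Phi(Y_{i,m})$ is \emph{not} $O_P(1)$ in that range, and your bound $\sup_{h\in H_n}\|R_n(h)\|=O_P(r_n/\sqrt k)$ is unjustified exactly where the restriction $\delta<\gamma_0+1/2$ (as opposed to merely $\delta<1/2$) matters. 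The paper survives this because the dangerous contribution is confined to $s$ within $1/k$ of the endpoint and the uniform bound $z=e^{O_P(1)}(-\log s)$ makes the divergence only polynomial, of order $k^{-1-3\gamma_0+\varepsilon}$, which is beaten by the factor $k^{\delta-1/2}$ precisely when $\delta+\varepsilon<\gamma_0+1/2$; in your setup you would need an analogous truncation/order-statistics argument in place of the law of large numbers. You attribute the condition $\delta<\gamma_0+1/2$ solely to the support (``good event'') issue, but in the paper it is needed a second time, for this envelope-divergence control (term $\mathrm{II}_a$ in the proof and inside Proposition~\ref{prop2}).

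A second, related weak point is the treatment of $E_n$: the assertion that the block maxima can be coupled with an exact $G_{\gamma_0}$ sample ``at a uniform rate $A(m)=O(k^{-1/2})$'' is not what the second-order condition delivers. The coupling error carries the weights visible in \eqref{eq:prop1}--\eqref{eq:remainder} and blows up near $s=0$ and $s=1$, and transferring the law of large numbers from $(Z_i)$ to $(Y_{i,m})$ by a Taylor step in $x$ again involves mixed derivatives with $z^{3\gamma_0}$-type behaviour, so the same endpoint problem reappears; this is exactly what the paper's weighted bounds \eqref{eq:lil}--\eqref{eq:boundQ} together with Proposition~\ref{prop3} are for. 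Your good-event computation does identify the correct threshold (up to a sign slip: the exponent should be $1+(\delta-1/2)/|\gamma_0|$), but note it is carried out as if $Y_{i,m}$ were exactly GEV; for the misspecified $F^m$ one needs the uniform second-order bounds (Lemma 4.2 of Ferreira and de Haan, i.e.\ the substance of Proposition~\ref{prop2}) to justify it. Finally, strict concavity needs positive definiteness of $I_{\theta_0}$, which is standard but does not follow from differentiability in quadratic mean alone; that is a minor point compared with the envelope integrability failure above.
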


\begin{remark}
 The conditions in Proposition \ref{prop:concave} are sufficient for consistency of MLE, see Dombry \cite{D15}. In particular $\sqrt k A(m)\to\lambda\in\bbR$ implies $m/\log n\to\infty$, the later required for consistency. When $\gamma_0\geq 0$, condition \eqref{eq:condition_rn} implies that \eqref{eq:theo1.3} holds for $h=o(k^{1/2-\varepsilon})$, $\varepsilon>0$.
\end{remark}

Our main result is the following Theorem establishing the asymptotic behavior of the local likelihood process and from which the existence and asymptotic normality of MLE will be deduced.
\begin{theorem}\label{theo1}
Assume conditions \eqref{eq:2order} and \eqref{eq:blocksize}. Then,
the local likelihood process satisfies uniformly
for $h$ in compact sets
\begin{align}
&\widetilde L_{k,m}(h)=\widetilde L_{k,m}(0)+h^T\widetilde G_{k,m}-\frac{1}{2}h^TI_{\theta_0}h+o_P(1),\label{eq:theo1.1}\\
&\frac{\partial \widetilde L_{k,m}}{\partial h} (h)=\widetilde G_{k,m}-I_{\theta_0}h+o_P(1), \label{eq:theo1.2}
\end{align}
where
\begin{equation}\label{eq:normal}
\widetilde G_{k,m}=\frac{1}{\sqrt{k}}\sum_{i=1}^k \frac{\partial
\ell}{\partial \theta}\left(\theta_0,\frac{M_{i,m}-b_m}{a_m} \right)
\stackrel{d}\longrightarrow \mathcal{N}(\lambda b,I_{\theta_0})
\end{equation}
i.e., is asymptotically Gaussian with variance equal to the information matrix and mean depending on the second order condition \eqref{eq:2order} through
\begin{equation}\label{eq:bias}
b=b(\gamma_0,\rho)=\int_0^1 \frac{\partial^2 \ell}{\partial x\partial\theta}\left(\theta_0,Q_{\gamma_0}(s)\right)H_{\gamma_0,\rho}\left(\frac{1}{-\log s} \right)ds
\end{equation}
and on the asymptotic block size through $\lambda$ from \eqref{eq:blocksize}.
\end{theorem}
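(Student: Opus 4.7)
The plan is to split the theorem into two logically independent pieces: derive the polynomial expansions (\ref{eq:theo1.1}) and (\ref{eq:theo1.2}) from Proposition \ref{prop:concave} via Taylor's theorem, and prove the central limit theorem (\ref{eq:normal}) for $\widetilde G_{k,m}$ directly from the second order condition.

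For the first piece, fix a compact $K\subset\bbR^3$. Since $r_n\to\infty$, for $n$ large enough every segment $[0,h]$ with $h\in K$ lies in the ball $H_n$ of Proposition \ref{prop:concave}. On the event where the support constraints $1+\gamma(M_{i,m}-\mu)/\sigma>0$ hold simultaneously for all $i\leq k$ (which occurs with probability tending to $1$), the function $\widetilde L_{k,m}$ is $C^2$ at every such $h$ and the integral form of Taylor's formula gives
\begin{equation*}
\widetilde L_{k,m}(h)=\widetilde L_{k,m}(0)+h^T\widetilde G_{k,m}+\int_0^1 (1-t)\,h^T\frac{\partial^2\widetilde L_{k,m}}{\partial h\,\partial h^T}(th)\,h\,dt.
\end{equation*}
Proposition \ref{prop:concave} replaces the Hessian inside the integral by $-I_{\theta_0}+o_P(1)$ uniformly over $th\in K$; integrating against $(1-t)$ yields $-\tfrac12 h^TI_{\theta_0}h+o_P(1)$ uniformly in $h\in K$, which is (\ref{eq:theo1.1}). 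The analogous zero-th order Taylor expansion applied to $\partial\widetilde L_{k,m}/\partial h$ delivers (\ref{eq:theo1.2}).

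For (\ref{eq:normal}), write $\widetilde G_{k,m}=k^{-1/2}\sum_{i=1}^k Y_{i,m}$ with $Y_{i,m}:=\frac{\partial\ell}{\partial\theta}(\theta_0,Z_{i,m})$ and $Z_{i,m}:=(M_{i,m}-b_m)/a_m$. The probability integral transform gives $Z_{1,m}\stackrel{d}{=}\phi_m(U)$ for $U$ uniform on $(0,1)$, where
\begin{equation*}
\phi_m(s)=\frac{V(-m/\log s)-V(m)}{a(m)}=Q_{\gamma_0}(s)+A(m)\,H_{\gamma_0,\rho}\!\left(\frac{1}{-\log s}\right)+o(A(m))
\end{equation*}
by (\ref{eq:2order}), the remainder being uniform on compact subsets of $(0,1)$. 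A first order expansion of $\frac{\partial\ell}{\partial\theta}(\theta_0,\cdot)$ around $Q_{\gamma_0}(s)$, followed by integration in $s$, yields
\begin{equation*}
\bbE[Y_{1,m}]=\int_0^1\frac{\partial\ell}{\partial\theta}(\theta_0,Q_{\gamma_0}(s))\,ds+A(m)\,b+o(A(m))=A(m)\,b+o(A(m)),
\end{equation*}
the first integral vanishing since the GEV score is centred at the truth. Multiplying by $\sqrt k$ and using $\sqrt k\,A(m)\to\lambda$ identifies the limiting mean as $\lambda b$. Dominated convergence then gives $\mathrm{Cov}(Y_{1,m})\to\int_0^1\frac{\partial\ell}{\partial\theta}(\theta_0,Q_{\gamma_0}(s))\,\frac{\partial\ell}{\partial\theta}(\theta_0,Q_{\gamma_0}(s))^T\,ds=I_{\theta_0}$ by the Bartlett identity for the exact GEV model. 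Finally a uniform $L^{2+\varepsilon}$ bound on $Y_{1,m}$ gives the Lindeberg condition, yielding the multivariate CLT.

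The main obstacle is obtaining uniform-in-$m$ bounds on $\frac{\partial\ell}{\partial\theta}(\theta_0,x)$ and $\frac{\partial^2\ell}{\partial x\,\partial\theta}(\theta_0,x)$ that are integrable against the law of $\phi_m(U)$, so that the remainder in the Taylor expansion of $\bbE[Y_{1,m}]$ really is $o(A(m))$ after integration, and so that the dominated convergence and Lindeberg steps go through. These bounds are delicate because the score and its $x$-derivative explode at the endpoint of the GEV support while $\phi_m(s)$ approaches this endpoint as $s\to 0$ (when $\gamma_0>0$) or $s\to 1$ (when $\gamma_0<0$). This is exactly where the hypothesis $\gamma_0>-1/2$ enters to ensure square-integrability of the score, and where the technical estimates gathered in Appendix B are invoked.
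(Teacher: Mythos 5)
Your first step (deriving \eqref{eq:theo1.1} and \eqref{eq:theo1.2} from Proposition~\ref{prop:concave} by Taylor expansion on the event where the likelihood is finite and smooth on $H_n$) is exactly what the paper does, and it is fine. The gap is in your proof of \eqref{eq:normal}. You treat $\widetilde G_{k,m}$ as a normalized sum of i.i.d.\ vectors $Y_{i,m}=\frac{\partial \ell}{\partial\theta}(\theta_0,Z_{i,m})$ and compute $\bbE[Y_{1,m}]$, $\mathrm{Cov}(Y_{1,m})$ and a Lindeberg/Lyapunov bound. But under misspecification these moments need not exist for any finite $m$: the score $\frac{\partial\ell}{\partial\theta}(\theta_0,\cdot)$ is only defined on $\{x:1+\gamma_0x>0\}$, while $Z_{1,m}=(M_{1,m}-b_m)/a_m$ has law determined by $F^m$ and in general puts positive mass outside this set for every finite $m$ (e.g.\ $\gamma_0>0$ with $F$ supported on all of $\bbR$, or $\gamma_0<0$ when $(x^*-b_m)/a_m>-1/\gamma_0$). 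So the quantities $\bbE[Y_{1,m}]$, $\mathrm{Cov}(Y_{1,m})$ and the Lindeberg sums you invoke are not well defined; fixing this requires truncating to the event that all $k$ normalized maxima fall inside the support and showing the truncation is negligible with the right rate, which is precisely the analysis your sketch does not supply. A second, related weakness: your expansion $\phi_m(s)=Q_{\gamma_0}(s)+A(m)H_{\gamma_0,\rho}(1/(-\log s))+o(A(m))$ is uniform only on compacts of $(0,1)$, whereas the claim $\bbE[Y_{1,m}]=A(m)b+o(A(m))$ requires integrating over all of $(0,1)$; the contributions near $s=0,1$, where the score and its $x$-derivative blow up, need the weighted uniform second-order inequalities (Lemma 4.2 of \cite{FdH15}) and endpoint estimates, not dominated convergence as stated.

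The paper avoids the moment problem altogether. It writes $\widetilde G_{k,m}=\sqrt{k}\int_0^1\frac{\partial\ell}{\partial\theta}(\theta_0,Q_{k,m}(s))\,ds$ in terms of the empirical quantile process, discards the boundary intervals $[0,1/k]$ and $[(k-1)/k,1]$ using the in-probability bounds of Propositions~\ref{prop2} and~\ref{prop3} (which control sample quantiles on $[\frac{1}{k+1},\frac{k}{k+1}]$ and require no integrability of the score under $F^m$), and then substitutes the Ferreira--de Haan expansion \eqref{eq:prop1}: the Brownian-bridge term produces the Gaussian fluctuation, the $\lambda H_{\gamma_0,\rho}$ term produces the bias $\lambda b$, and the remaining terms are shown negligible via the weighted bounds. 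The limiting covariance is then identified as $I_{\theta_0}$ by comparison with the exact-GEV LAN statement (Corollary~\ref{cor:LAN}) rather than by computing $\mathrm{Cov}(Y_{1,m})$. If you want to keep your triangular-array route, you must (i) restrict to the high-probability event that the score is defined at all $k$ block maxima and control the effect of this conditioning on the first two moments at the $1/\sqrt{k}$ scale, and (ii) replace the compact-uniform expansion by the weighted uniform second-order inequalities to justify the $o(A(m))$ after integration; as written, both ingredients are missing.
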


\begin{remark}{\rm
Explicit formulas for the Fisher information matrix $I_{\theta_0}$ have been given by Prescott and Walden \cite{PW80} (see also Beirlant \textit{et al.} \cite{BGST04} page 169). The vector $b$ given by the integral representation \eqref{eq:bias} can also be computed explicitly. Formulas are provided in Appendix  \ref{app:A}. }
\end{remark}
\begin{remark}{\rm
Equation \eqref{eq:blocksize} requires that both the number of blocks $k$ and the block size $m$ go to infinity
with a relative rate measured by the second order scaling function $A$ and a parameter $\lambda$. When $\lambda=0$, the bias term disappears in \eqref{eq:normal};
this corresponds to the situation where $m$ grows to infinity very quickly with respect to $k$ so that the block size is large enough
and the GEV approximation \eqref{eq:convBM} is very good.}
\end{remark}

Existence and asymptotic normality of the MLE can be deduced from Theorem~\ref{theo1}, mainly by the argmax theorem. The concavity property stated in
Proposition~\ref{prop:concave} plays an important role in the proof of existence and uniqueness.

\begin{theorem}\label{theo2}
Assume conditions \eqref{eq:2order} and \eqref{eq:blocksize}.
\begin{enumerate}
\item[(a)] There exists a sequence of estimators $\widehat\theta_n=(\widehat\gamma_n,\widehat\mu_n,\widehat\sigma_n)$, $n\geq 1$, such that
 \begin{equation}\label{eq:theo2.1}
 \lim_{n\to+\infty}\mathbb{P}\left[\widehat\theta_n \mbox{ is a MLE }\right]=1
 \end{equation}
and
\begin{equation}\label{eq:theo2.2}
 \sqrt{k}\left(\widehat\gamma_n-\gamma_0, \frac{\widehat\mu_n-b_m}{a_m}, \frac{\widehat\sigma_n}{a_m}-1\right)\stackrel{d}\longrightarrow \mathcal{N}(\lambda I_{\theta_0}^{-1}b,I_{\theta_0}^{-1}).
\end{equation}
\item[(b)] If $\widehat\theta^i_n=(\widehat \gamma^i_n,\widehat \mu^i_n, \widehat \sigma^i_n)$, $i=1,2$,are  two sequences of estimators satisfying
\[
 \lim_{n\to+\infty}\mathbb{P}\left[\widehat\theta_n^i \mbox{ is a MLE }\right]=1
\]
and
\[
 \lim_{n\to+\infty}\mathbb{P}\left[\sqrt{k}\left(\widehat\gamma_n^i-\gamma_0, \frac{\widehat\mu_n^i-b_m}{a_m}, \frac{\widehat\sigma_n^i}{a_m}-1\right)\in H_n\right]=1,
\]
then $\widehat\theta_n^1$ and $\widehat\theta_n^2$ are equal with high probability, i.e.
\[
\lim_{n\to +\infty}\mathbb{P}\left[\widehat\theta_n^1=\widehat\theta_n^2\right]=1.
\]
\end{enumerate}
\end{theorem}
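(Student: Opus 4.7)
My plan is to run a standard argmax/score argument, reparametrized through the local parameter
\[
h = \bigl(\sqrt{k}(\gamma-\gamma_0),\ \sqrt{k}(\mu-b_m)/a_m,\ \sqrt{k}(\sigma/a_m-1)\bigr),
\]
so that the score equation becomes $(\partial \widetilde L_{k,m}/\partial h)(h)=0$. I introduce the ``oracle'' maximizer of the quadratic approximation, $h_n^* := I_{\theta_0}^{-1}\widetilde G_{k,m}$, which by \eqref{eq:normal} is $O_P(1)$ with limit $\mathcal{N}(\lambda I_{\theta_0}^{-1}b,I_{\theta_0}^{-1})$, and I will show that an MLE $\widehat h_n$ exists within any preassigned distance of $h_n^*$ with probability tending to $1$.

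For part (a), fix $\eta>0$. By tightness of $h_n^*$, for any $\varepsilon>0$ there is a fixed compact $K\subset\mathbb{R}^3$ such that $\overline{B_\eta(h_n^*)}\subset K$ on an event of probability at least $1-\varepsilon$ for $n$ large. Applying \eqref{eq:theo1.1} on $K$ and completing the square via $\widetilde G_{k,m}=I_{\theta_0}h_n^*$ gives, uniformly in $h\in\overline{B_\eta(h_n^*)}$,
\[
\widetilde L_{k,m}(h)-\widetilde L_{k,m}(h_n^*) = -\tfrac{1}{2}(h-h_n^*)^T I_{\theta_0}(h-h_n^*)+o_P(1).
\]
On the sphere $\|h-h_n^*\|=\eta$ the deterministic part is at most $-c\eta^2/2$, where $c>0$ is the smallest eigenvalue of $I_{\theta_0}$. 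Hence, with probability tending to $1$, the continuous function $\widetilde L_{k,m}$ attains its maximum over the closed ball at an interior point $\widehat h_n$, which therefore solves $(\partial\widetilde L_{k,m}/\partial h)(\widehat h_n)=0$. Defining $\widehat\theta_n$ via the reparametrization (and arbitrarily on the negligible bad event) yields \eqref{eq:theo2.1}; letting $\eta\downarrow 0$ along a sequence gives $\widehat h_n-h_n^*=o_P(1)$, and \eqref{eq:theo2.2} follows from Slutsky's theorem.

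For part (b), I appeal to Proposition~\ref{prop:concave}: with probability tending to $1$, $\widetilde L_{k,m}$ is strictly concave on the convex ball $H_n$. A strictly concave $C^1$ function on a convex open set has at most one critical point, so if the local parameters of $\widehat\theta_n^1$ and $\widehat\theta_n^2$ both lie in $H_n$ and both solve the score equation, they must coincide on this high-probability event.

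The main obstacle I anticipate is transferring the fixed-compact uniformity in Theorem~\ref{theo1} to the random ball $\overline{B_\eta(h_n^*)}$; I handle this by combining tightness of $h_n^*$ with positive definiteness of $I_{\theta_0}$, which forces the maximum into the interior and thus produces an interior critical point. The concavity on the much larger ball $H_n$, of radius $r_n\to\infty$ given by Proposition~\ref{prop:concave}, is then what rules out spurious MLEs far from $h_n^*$ and yields the uniqueness statement in (b).
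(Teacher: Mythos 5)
Your argument is correct, and it reaches the conclusion by a somewhat different route than the paper. The paper defines $\widehat h_n$ as the global maximizer of $M_n(h)=\widetilde L_{k,m}(h)-\widetilde L_{k,m}(0)$ over the growing ball $H_n$, proves tightness of this maximizer by comparing $\max_{\|h\|\le R}M_n$ with $\max_{\|h\|=R+1}M_n$ via weak convergence of $M_n$ to the limit process $M(h)=h^T(\lambda b+G)-\tfrac12 h^TI_{\theta_0}h$ in $\ell^\infty(K)$, and then invokes the Argmax Theorem (van der Vaart, Corollary 5.58), using the strict concavity of Proposition~\ref{prop:concave} both to force the maximizer of $M_n$ over $H_n$ into a fixed ball and for uniqueness in part (b). You instead bypass the argmax machinery entirely: you center at the oracle $h_n^*=I_{\theta_0}^{-1}\widetilde G_{k,m}$, complete the square in the uniform expansion \eqref{eq:theo1.1}, and produce an interior critical point of $\widetilde L_{k,m}$ within $\eta$ of $h_n^*$ with probability tending to one, after which \eqref{eq:theo2.2} follows from \eqref{eq:normal} and Slutsky; Proposition~\ref{prop:concave} is used only for part (b), exactly as in the paper. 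Your route is more elementary and self-contained (no limit process, no argmax continuity argument), at the cost of producing only a local maximizer near $h_n^*$ rather than the maximizer over all of $H_n$, which the statement does not require anyway. Two small points you should make explicit: (i) the passage from ``a critical point exists within $\eta$ for each fixed $\eta$'' to a single estimator sequence with $\widehat h_n-h_n^*=o_P(1)$ needs the usual diagonalization in $\eta=1/j$ together with a measurable selection (the paper sidesteps this by taking the lexicographically smallest maximizer); and (ii) the continuity and differentiability of $\widetilde L_{k,m}$ on the random ball, which you use to extract an interior critical point, should be justified — either by noting that the uniform expansion \eqref{eq:theo1.1} forces finiteness (hence smoothness) of the log-likelihood there with high probability, or by citing Lemma~\ref{lem:finite} as the paper does. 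Neither point is a genuine gap.
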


\begin{remark}
An interesting by-product of the strict concavity stated in Proposition~\ref{prop:concave} is the convergence of numerical procedures for the computation of the MLE that are implemented in software. The Newton-Raphson algorithm is commonly used to solve numerically the score equation \eqref{eq:score}. Strict concavity of the objective function on a large neighbourhood of the solution  ensures convergence of the algorithm with high probability as soon as the initial value $\theta=(\gamma,\mu,\sigma)$ belongs to this neighbourhood.
\end{remark}

\section{Theoretical comparisons: BM vs POT and MLE vs PWM}

The POT method uses observations above some high threshold or top order statistic and the underlying approximate model is the Generalized Pareto distribution (Balkema and de Haan \cite{BdH74}, Pickands \cite{P75}). Estimators of the shape parameter $\gamma$, as well as location and scale parameters have been proposed and widely studied, including MLE and PWM (Hosking and Wallis \cite{HW87}). For their asymptotic properties - under basically the same conditions as under BM in Theorem \ref{theo2} - we refer to de Haan and Ferreira \cite{dHF06}. Asymptotic normality of PWM estimators under BM has been established only recently by Ferreira and de Haan \cite{FdH15} and a comparison of PWM estimators under BM and POT has been carried out. The aim of the present section is to include our new asymptotic results for MLE estimators under BM, completing the picture in the comparison of the four different cases BM/POT and MLE/PWM. 

Recall that asymptotic normality of MLE (resp. PWM estimator) holds for $\gamma>-1/2$ (resp. $\gamma<1/2$). The number $k$ of selected observations  corresponds to the number of blocks in BM and of selected top order statistics in POT. Similarly as in Ferreira and de Haan \cite{FdH15}, our comparative study is restricted to the range $\rho\in [-1,0]$ where second order conditions for BM and POT are comparable (cf. Drees et al. \cite{DdHL03} or Ferreira and de Haan \cite{FdH15}). In the following we compare MLE/PWM under BM/POT methods through their: (i) asymptotic variances (VAR), (ii) asymptotic biases (BIAS), (iii) optimal asymptotic mean square errors (AMSE) and optimal number of observations minimizing AMSE ($k_0$).

\subsubsection*{(i) Asymptotic variances}
The asymptotic variance depends on $\gamma$ only and is plotted in Figure \ref{VarBMPOT.fig} where straight lines stand for MLE and dashed lines for PWM estimators. Among all four different cases, BM-MLE is the one with the smallest variance within its range. Moreover, for both estimators, BM has  the lowest variance indicating that BM is preferable to POT when variance is concerned.

\begin{figure}
	\centering
	\includegraphics[width=10cm,height=4cm]{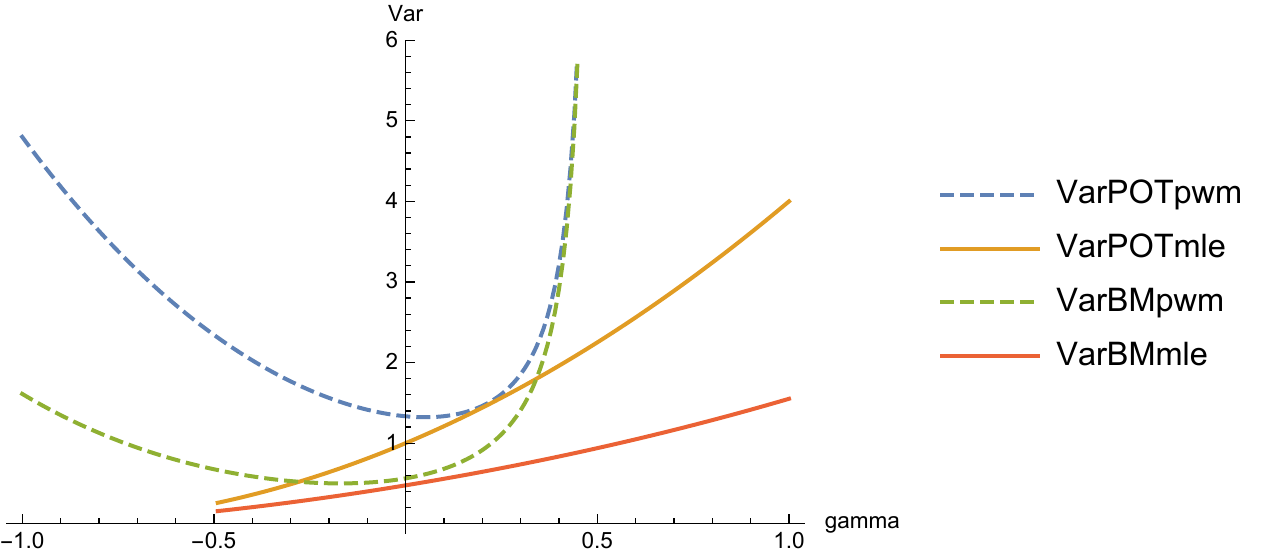}
	\caption{Asymptotic variances of estimators of the extreme value index $\gamma$. The straight lines corresponds to the MLE under BM and POT while the dashed lines correspond to PWM under BM and POT.} \label{VarBMPOT.fig}
\end{figure}

\subsubsection*{(ii) Asymptotic biases}
The asymptotic biases depend on $\gamma$ and $\rho$ and are shown in Figures \ref{bias}--\ref{biasratio}: POT-MLE is the one with the smallest bias also in absolute value when compared to BM-MLE, contrary to what was observed for variance. This is in agreement with what has been observed when comparing BM-PWM and POT-PWM, also shown in Figures \ref{bias}--\ref{biasratio} already analysed in Ferreira and de Haan \cite{FdH15}. There is again the indication that POT method is favourable to BM when concerning bias.

\begin{figure}
	\centering
	\includegraphics[width=10cm,height=4cm]{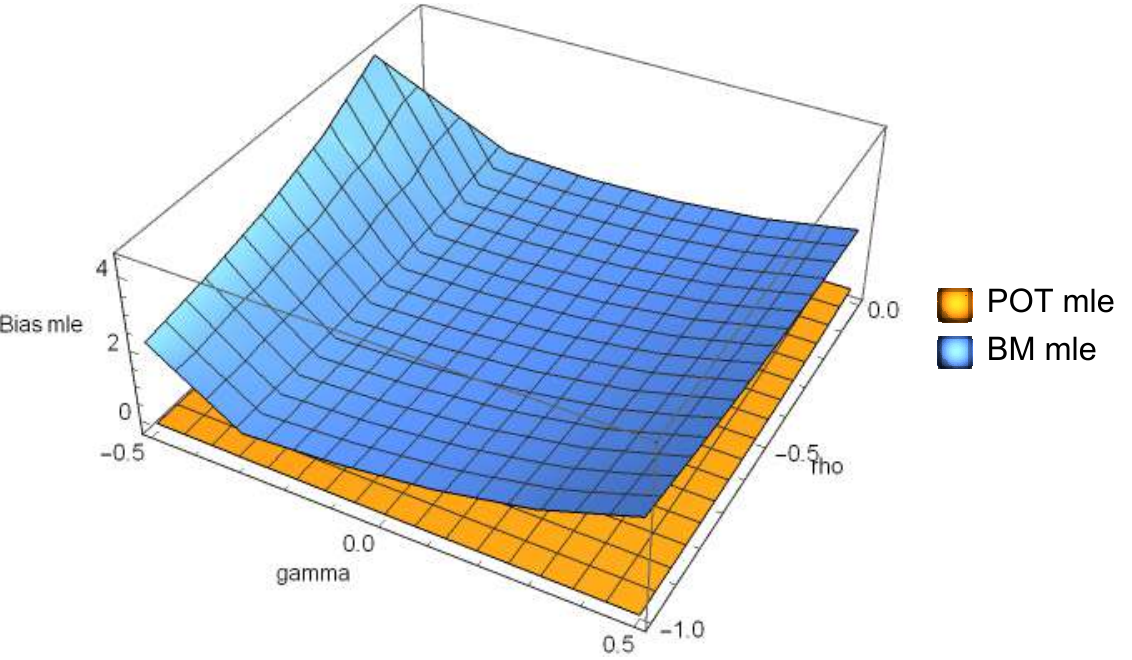}
	\caption{Asymptotic bias of estimators of the extreme value index $\gamma$: blue color for BM and orange for POT.} \label{bias}
\end{figure}

\begin{figure}
	\centering
	\begin{subfigure}{.5\textwidth}
		\centering
		\includegraphics[width=.65\linewidth]{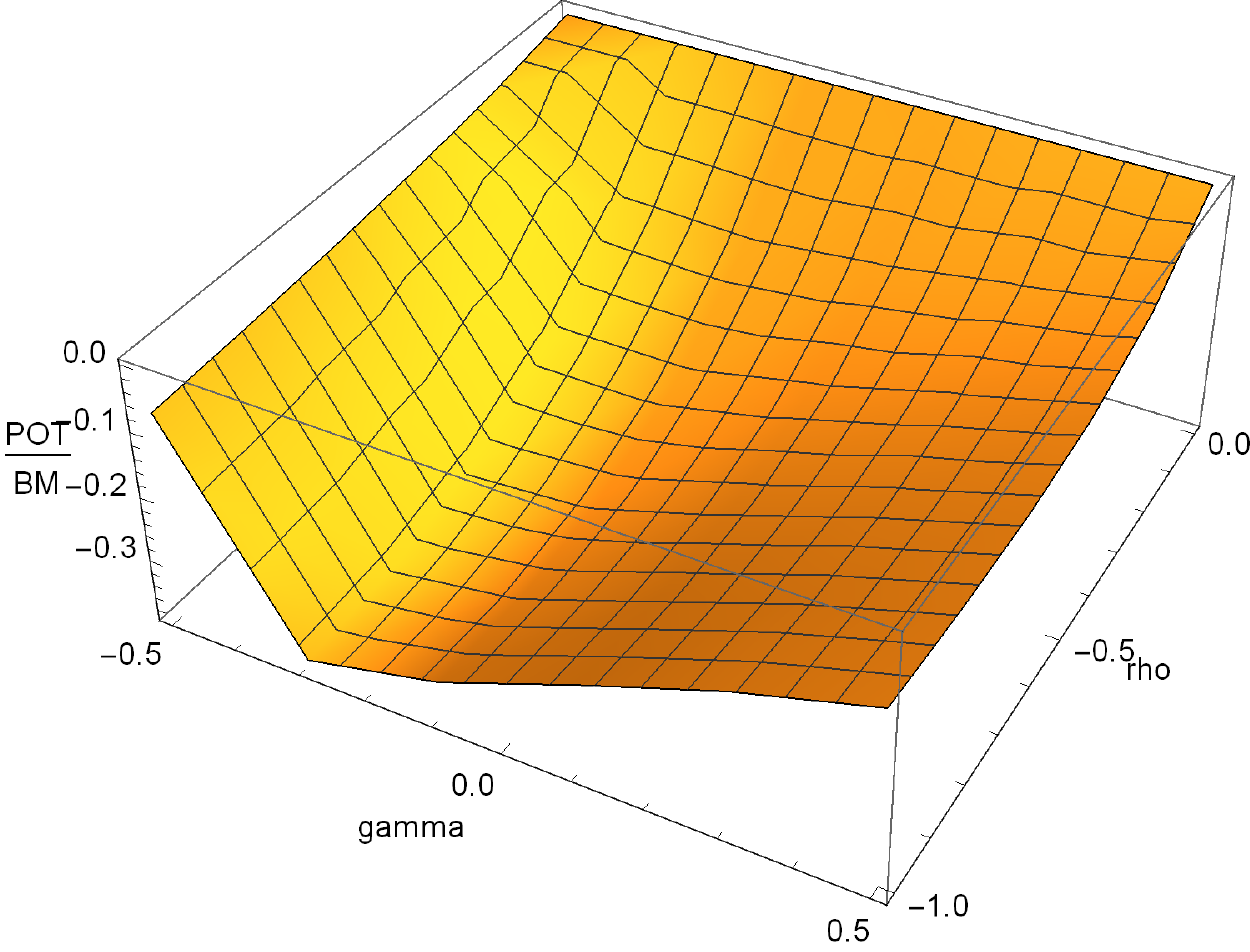}
		\caption{MLE}
		\label{biasBMPOTMLE2}
	\end{subfigure}%
	\begin{subfigure}{.5\textwidth}
		\centering
		\includegraphics[width=.65\linewidth]{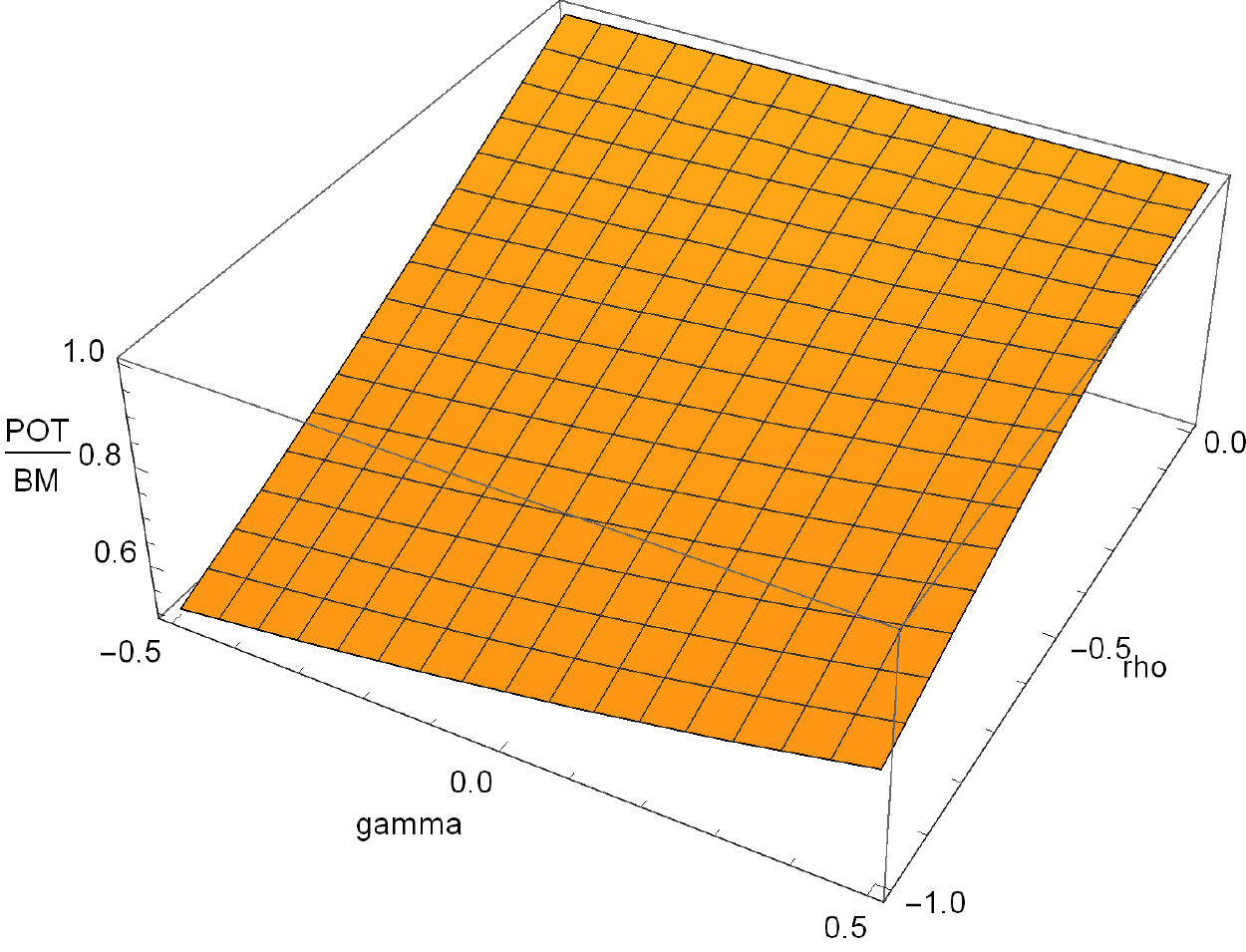}
		\caption{PWM}
		\label{biasMLE2}
	\end{subfigure}
	\caption{Ratios of asymptotic bias: $\text{BIAS}_{POT}/\text{BIAS}_{BM}$.}
	\label{biasratio}
	\end{figure}

\subsubsection*{(iii) Optimal asymptotic MSEs and optimal number of observations}
Another way to compare the estimators that combines both variance and bias information is through mean square error. One can compare these for the optimal number of observations $k_0$ i.e., that value for which the asymptotic mean square error (AMSE) is minimal. Similarly as in Ferreira and de Haan \cite{FdH15}, under the conditions of Theorem \ref{theo2}, we have
\[
k_0\sim\frac n{\left(\frac 1 s\right)^{\leftarrow}(n)} \;\left(\frac{\text{VAR}^2(\gamma)}{\text{BIAS}^2(\gamma,\rho)}\right)^{1/(1-2\rho)}, \qquad n\to\infty, 
\]
with  $s(\cdot)$ a decreasing and $2\rho-1$ regularly varying function such that $A^2(t)=\int_t^\infty s(u)\,du$. It follows in particular that the optimal $k_0$ is different but of the same order for both estimators and methods. As for the optimal AMSE, we have
\[
\text{AMSE}\sim \frac{1-2\rho}{-2\rho}\,\frac {\left( 1/ s\right)^{\leftarrow}(n)}n\,\left(\text{BIAS}^2(\gamma,\rho)\right)^{1/(1-2\rho)}\left(\text{VAR}(\gamma)\right)^{-2\rho/(1-2\rho)} ,\  \quad n\to\infty.
\]
When considering ratios of optimal AMSE (or optimal number $k_0$ of selected observations), the regularly varying function  cancels out and the asymptotic ratio does not depend on $n$ but only on $\gamma$ and $\rho$.

Figure \ref{contour} shows the contour plots of the ratio $\text{AMSE}_{POT}/\text{AMSE}_{BM}$ for MLE and PWM estimators. It is surprising to see a reverse behaviour in both cases: in the range of parameters considered, POT is preferable when MLE are considered, while BM is mostly preferable for PWM estimators. 

\begin{figure}
	\centering
	\begin{subfigure}{.5\textwidth}
		\centering
		\includegraphics[width=.5\linewidth]{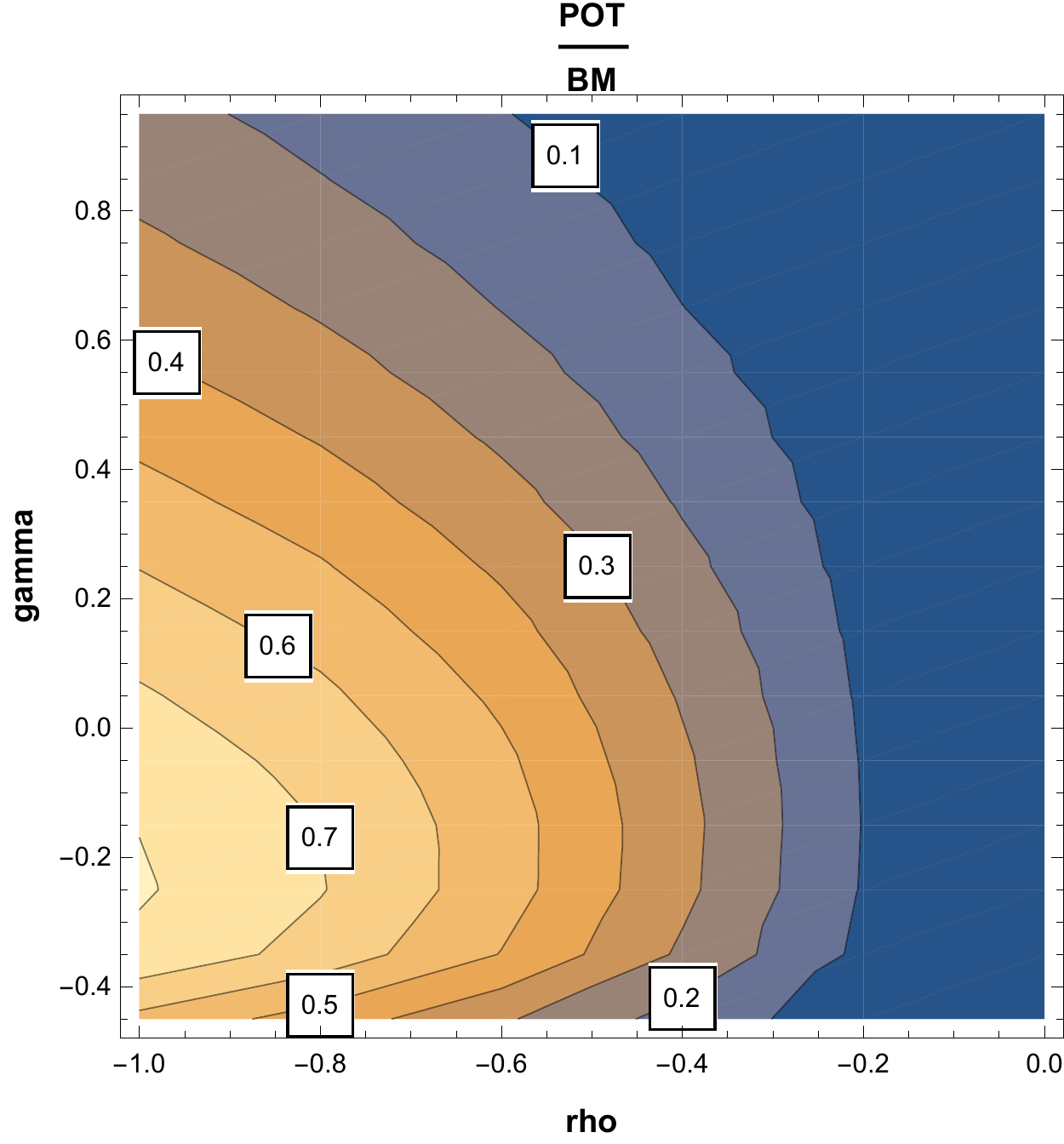}
		\caption{MLE}
		\label{ratios-AMSE-MLE}
	\end{subfigure}%
	\begin{subfigure}{.5\textwidth}
		\centering
		\includegraphics[width=.5\linewidth]{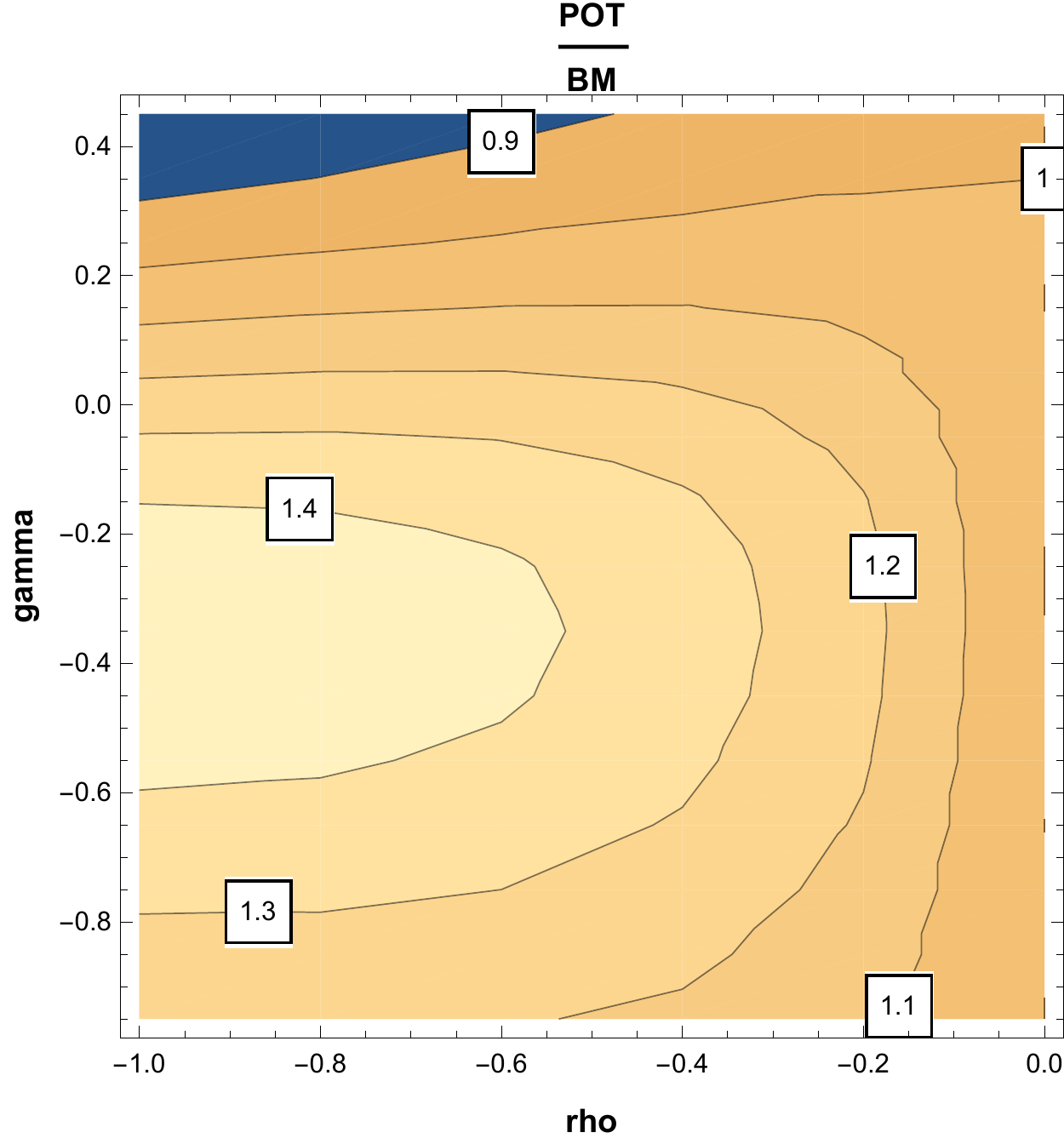}
		\caption{PWM}
		\label{ratios-AMSE-PWM}
	\end{subfigure}
	\caption{Contour plot of ratios of optimal AMSE: $\text{AMSE}_{POT}/\text{AMSE}_{BM}$.}
	\label{contour}
\end{figure}

In Figure \ref{4amse} are shown $ \left(\text{BIAS}^2(\gamma,\rho)\right)^{1/(1-2\rho)}\,\left(\text{VAR}(\gamma)\right)^{-2\rho/(1-2\rho)}$ for comparing optimal AMSE among all combinations. The green surface corresponds to MLE-POT that has always the minimal optimal AMSE  in the range of parameters considered. Finally,  Figure \ref{k0.fig} reports for MLE  the asymptotic ratio of optimal numbers of selected observations, that is $k_{0,POT}/k_{0,BM}$. We can see that the optimal number of observations is larger for POT, which is in agreement with the PWM case considered in previous studies.

\begin{figure}
	\centering
	\begin{minipage}{.5\textwidth}
		\centering
		\includegraphics[width=.75\linewidth]{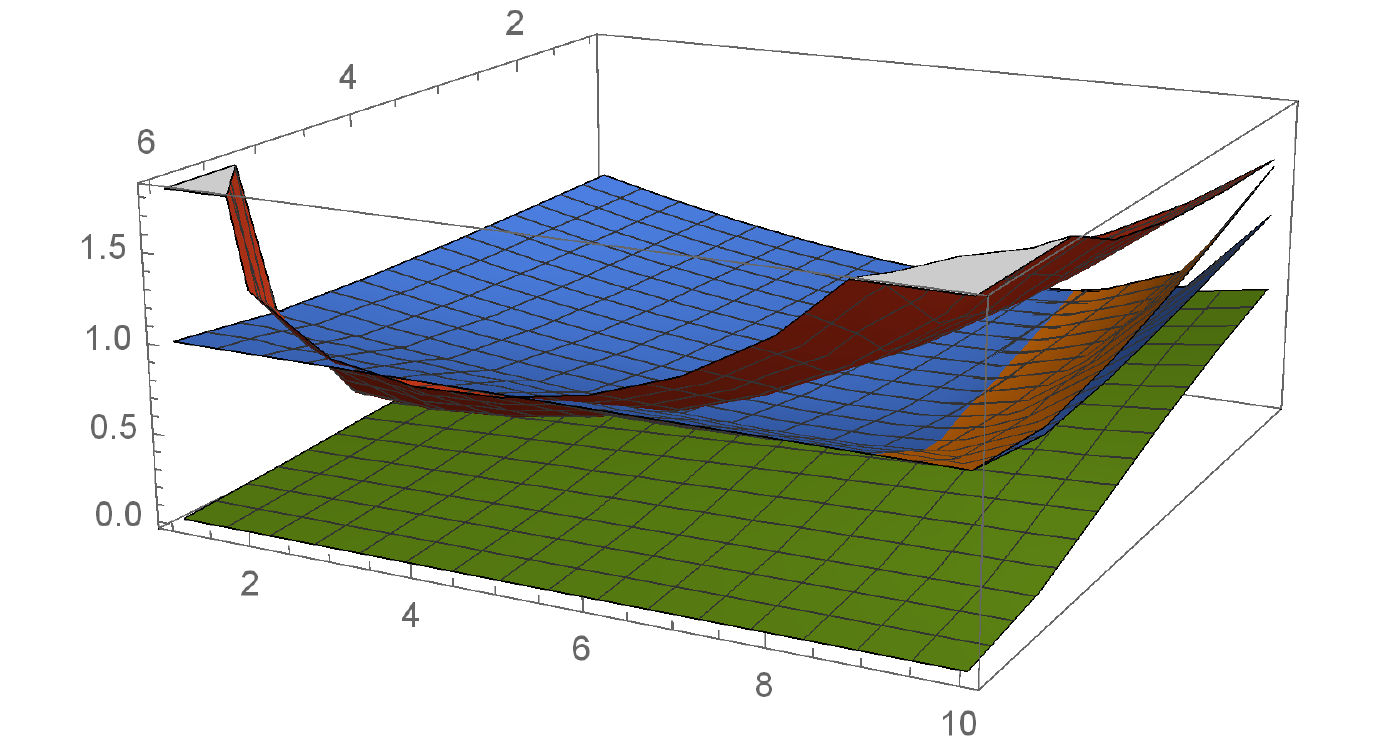}
		\captionof{figure}{Comparison of optimal AMSE: the lowest green surface corresponds to MLE-POT.}
		\label{4amse}
	\end{minipage}%
	\begin{minipage}{.5\textwidth}
		\hspace{-.5cm}
		\includegraphics[width=.5\linewidth]{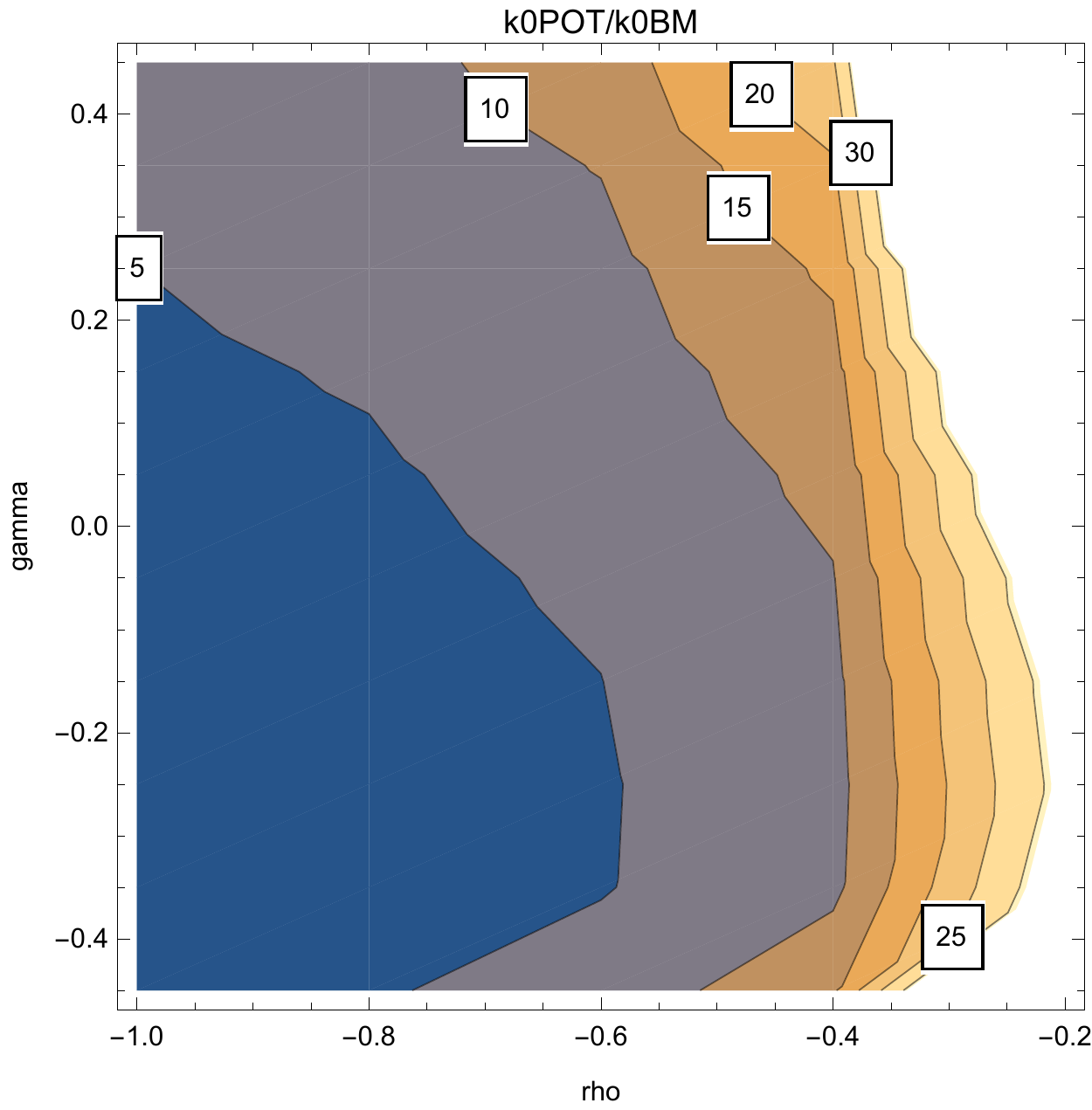}
		\captionof{figure}{Asymptotic ratio of optimal sizes: $k_{0,POT}/k_{0,BM}$.}
		\label{k0.fig}
	\end{minipage}
\end{figure}

\section{Main proofs}

We start by introducing some material that will be useful for the
proofs. More technical material is still postponed to Appendices.

\subsection{Local asymptotic normality of the GEV model}
If the observations $(X_i)_{i\geq 1}$ are exactly $\mathrm{GEV}(\gamma_0,\mu_0,\sigma_0)$ distributed, then the choice of constants
\begin{equation}\label{eq:constants}
 a_m=\sigma_0 m^{\gamma_0}\quad\mbox{and}\quad b_m=\mu_0+\sigma_0 \frac{m^{\gamma_0}-1}{\gamma_0}
\end{equation}
ensures that the normalised block maxima
$\left((M_{i,m}-b_m)/a_m\right)_{k\geq 1}$ are i.i.d. with
distribution $G_{\gamma_0}$. The issue of model misspecification is
irrelevant in that particular case.

In this simple i.i.d. setting, a key property in the theory of
ML estimation is differentiability in quadratic mean
(see e.g. van der Vaart \cite[Chapter 7]{vdV98}).  A
statistical model defined by the family of densities $\{p_\theta(x),
\theta\in\Theta\}$ is called differentiable in quadratic mean at
the point $\theta_0$ if there exists a measurable function $\dot
\ell_{\theta_0}$ called the \textit{score function} such that
\[
 \int_{\mathbb{R}} \left[\sqrt{p_{\theta_0+h}(x)}-\sqrt{p_{\theta_0}(x)}-\frac{1}{2}h^T \dot \ell_{\theta_0}(x)\sqrt{p_{\theta_0}(x)}\right]^2\mathrm{d}x =o(\|h\|^2),\quad \mbox{as} \ h\to 0.
\]
The following Proposition corresponds to Proposition 3.2 in B\"ucher and Segers \cite{BS16b}. We provide a slightly different proof in the case $-1/2<\gamma_0\leq -1/3$.

\begin{proposition}\label{prop:DQM}
The three parameter GEV model with log-likelihood $\ell(\theta,x)$ defined in Equation \eqref{eq:defGEV} is differentiable in quadratic mean at $\theta_0=(\gamma_0,\sigma_0,\mu_0)\in \Theta$ if and only if $\gamma_0>-1/2$. The score function is then given by  $\dot \ell_{\theta_0}(x)=\frac{\partial \ell}{\partial \theta}(\theta_0,x)$.
\end{proposition}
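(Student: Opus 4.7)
I would prove the two implications separately. The ``only if'' part is standard: DQM forces the Fisher information $I_{\theta_0}$ to be finite, and a direct computation (as in Prescott--Walden \cite{PW80}) reduces its entries to Euler-$\Gamma$-type integrals that converge iff $\gamma_0>-1/2$. For the ``if'' direction, my strategy is to decompose the Hellinger distance
\[
H^2(P_{\theta_0+h},P_{\theta_0})=\int \bigl(\sqrt{p_{\theta_0+h}(x)}-\sqrt{p_{\theta_0}(x)}\bigr)^2\,dx
\]
into the piece over the intersection $S_0\cap S_h$ of the supports $S_\theta=\{p_\theta>0\}$ and the piece over the symmetric difference $S_0\triangle S_h$, and to show the two pieces are $\tfrac14 h^T I_{\theta_0}h + o(\|h\|^2)$ and $o(\|h\|^2)$ respectively, which in combination with pointwise identification of the score $\partial_\theta\ell(\theta_0,\cdot)$ yields the DQM identity.

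The symmetric difference piece is exactly $P_{\theta_0+h}(S_h\setminus S_0)+P_{\theta_0}(S_0\setminus S_h)$, and it is here that the threshold $\gamma_0>-1/2$ forces itself. For $\gamma_0<0$ the upper endpoint $\tau_\theta=\mu-\sigma/\gamma$ varies smoothly with $\theta$, and $p_\theta(x)\asymp (\tau_\theta-x)^{-1-1/\gamma}$ near $\tau_\theta$, so $P_\theta((\tau_\theta-\varepsilon,\tau_\theta))\asymp \varepsilon^{-1/\gamma}$; with $\varepsilon=|\tau_{\theta_0+h}-\tau_{\theta_0}|=O(\|h\|)$ this contributes $O(\|h\|^{-1/\gamma_0})$, which is $o(\|h\|^2)$ precisely when $-1/\gamma_0>2$, i.e.\ $\gamma_0>-1/2$.

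On the intersection, $\theta\mapsto\sqrt{p_\theta(x)}$ is smooth for fixed $x\in S_0\cap S_h$, so a pointwise Taylor expansion identifies $\dot\ell_{\theta_0}=\partial_\theta\ell(\theta_0,\cdot)$ as the candidate score; the key point is to upgrade this to $L^2$-convergence. The main obstacle is the regime $-1/2<\gamma_0\leq -1/3$, where the score is only barely square-integrable under $P_{\theta_0}$ and a naive dominating function near the upper endpoint does not apply uniformly in $h$. My plan to bypass this is to reparametrize by $X=Q_{\theta_0}(U)$ with $U$ uniform on $(0,1)$: under $P_\theta$ the variable $U$ has density $q_\theta(u)=p_\theta(Q_{\theta_0}(u))/p_{\theta_0}(Q_{\theta_0}(u))$ on $(0,1)$ plus possibly a point mass at $u=1$ of size $P_\theta(X>\tau_{\theta_0})$, which is $o(\|h\|^2)$ by the previous paragraph. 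On the continuous part the map $\theta\mapsto q_\theta(u)$ is smooth for every fixed $u\in(0,1)$, with no moving support inside $(0,1)$, so the classical continuously-differentiable-square-root criterion of van der Vaart \cite{vdV98}, Lemma~7.6, applies once continuity at $\theta_0$ of the transformed Fisher information $\int_0^1 (\partial_\theta q_\theta)(\partial_\theta q_\theta)^T/q_\theta\,du$ is verified by an explicit dominated convergence estimate, valid precisely when $\gamma_0>-1/2$. Since DQM is preserved under this smooth reparametrization, the conclusion transfers back to $\{p_\theta\}$.
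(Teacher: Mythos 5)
Your overall architecture is reasonable, and two of its three pieces are fine: the bound $P_{\theta_0+h}(S_h\setminus S_0)+P_{\theta_0}(S_0\setminus S_h)=O(\|h\|^{-1/\gamma_0})=o(\|h\|^2)$ for $\gamma_0\in(-1/2,0)$ is exactly the mechanism the paper uses on the region $[\underline{x}_h,+\infty)$, and your ``only if'' sketch (DQM plus a.e.\ pointwise differentiability forces the pointwise score to be the DQM score, hence square-integrable, which fails for $\gamma_0\le-1/2$) is a legitimate alternative to the support-mass argument of B\"ucher and Segers \cite{BS16b} that the paper cites. The genuine gap is in your key step for the hard regime $-1/2<\gamma_0\le-1/3$, which is precisely the case the paper's proof exists to handle. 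The probability integral transform $U=G_{\theta_0}(X)$ does \emph{not} remove the moving-support problem: for $\gamma_0<0$ and perturbations $h$ that pull the endpoint inward, the transformed density $q_{\theta_0+h}(u)=p_{\theta_0+h}(Q_{\theta_0}(u))/p_{\theta_0}(Q_{\theta_0}(u))$ vanishes on the interval $(G_{\theta_0}(x_h),1)\subset(0,1)$, so the support of $q_\theta$ inside $(0,1)$ still varies with $\theta$. Worse, since $q_\theta(u)$ is just $p_\theta$ evaluated at the fixed point $Q_{\theta_0}(u)$ divided by a $\theta$-free constant, the map $\theta\mapsto\sqrt{q_\theta(u)}$ has exactly the same non-differentiability at parameters whose endpoint crosses $Q_{\theta_0}(u)$ as $\theta\mapsto\sqrt{p_\theta(x)}$ does; for any fixed neighbourhood of $\theta_0$ such crossings occur for $u$ close to $1$, and the local exponent $(-1-1/\gamma)/2\le 1$ when $\gamma\le-1/3$ is what destroys continuous differentiability. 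Hence the hypotheses of van der Vaart's Lemma 7.6 \cite{vdV98} fail for the transformed family exactly as for the original one. This is unavoidable: DQM and the entire Hellinger geometry are invariant under a fixed ($\theta$-independent) measurable transformation of the data, so no reparametrization of the sample space can manufacture smoothness that is not already there; invariance is precisely why the conclusion would ``transfer back'', but also why nothing is gained going forward.

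What is needed in this regime---and what the paper supplies---is a direct second-order estimate on $(-\infty,\underline{x}_h)$ with $\underline{x}_h=\min_{0\le u\le1}x_{uh}$: a Taylor expansion of $u\mapsto\sqrt{p_{\theta_0+uh}(x)}$ whose remainder is controlled by the explicit envelopes for the first and second derivatives of $\ell$ in Proposition~\ref{prop3}, yielding a dominating function of order $\|h\|^2$ times a $p_{\theta_0+vh}$-weighted power of $z(\theta_0+vh,x)$ that is integrable because $\gamma_0>-1/2$; this is then combined with the endpoint-region estimate you already have. Your outline would need to be repaired along these lines (or by simply invoking Proposition 3.2 of \cite{BS16b}) before it constitutes a proof of the ``if'' direction for $-1/2<\gamma_0\le-1/3$.
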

\begin{proof}[Proof of Proposition \ref{prop:DQM}]
The density of the 3-parameter GEV model is given by
\[
p_\theta(x)= \left(1+\gamma\frac{x-\mu}{\sigma}\right)^{-1-1/\gamma}\exp \left(-\left(1+\gamma\frac{x-\mu}{\sigma} \right)^{-1/\gamma}\right)
\]
if $1+\gamma\frac{x-\mu}{\sigma}>0$ and $0$ otherwise. In the case $\gamma_0>-1/3$, the function 
\[
\theta\in (-1/3,+\infty)\times\mathbb{R}\times (0,+\infty)\mapsto \sqrt{p_\theta(x)}
\]
is continuously differentiable for every $x\in\mathbb{R}$ and  the information matrix $\theta\mapsto I_\theta$ is well defined and continuous (see Appendix \ref{app:A} or Beirlant et al. \cite{BGST04} page 169). Differentiability in quadratic mean of the GEV model follows  by a straightforward  application of Lemma 7.6 in Van der Vaart \cite{vdV98}.

In the case $\gamma_0\in(-1/2,-1/3]$, the function $\theta\mapsto \sqrt{p_\theta(x)}$ is not differentiable at points such that $1+\gamma\frac{x-\mu}{\sigma}=0$.
Going back to the definition of differentiability in quadratic mean, we need to show that
\begin{equation}\label{eq:LAN1}
 \lim_{h\to 0}\int_{\mathbb{R}} \left[\frac{ \sqrt{p_{\theta_0+h}(x)}-\sqrt{p_{\theta_0}(x)}-\frac{1}{2}h^T \frac{\partial \ell}{\partial \theta}(\theta_0,x)\sqrt{p_{\theta_0}(x)} }{\|h\|} \right]^2\mathrm{d}x=0.
\end{equation}
This is credible because for all $x\neq \mu-\sigma/\gamma$, the relation
\[
\frac{\partial\sqrt{p_\theta(x)}}{\partial \theta}_{\big| \theta=\theta_0}=\frac{1}{2}\frac{\partial \ell}{\partial \theta}(\theta_0,x)\sqrt{p_{\theta_0}(x)}
\]
 entails
\[
 \lim_{h\to 0} \frac{\sqrt{p_{\theta_0+h}(x)}-\sqrt{p_{\theta_0}(x)}-\frac{1}{2}h^T \frac{\partial \ell}{\partial \theta}(\theta_0,x)\sqrt{p_{\theta_0}(x)}}{\|h\|}=0.
\]
For further reference, we note also that, for $x\neq \mu-\sigma/\gamma$,
\begin{equation}\label{eq:LAN2}
 \frac{\partial^2\sqrt{p_{\theta}(x)}}{\partial \theta\partial \theta^T}_{\big| \theta=\theta_0} = \frac{1}{4}p_{\theta_0}(x)\frac{\partial \ell}{\partial \theta}(\theta_0,x)\frac{\partial \ell}{\partial \theta^T}(\theta_0,x)+\frac{1}{2}\sqrt{p_{\theta_0}(x)} \frac{\partial^2 \ell}{\partial \ell \theta\partial \theta^T}(\theta_0,x).
\end{equation}
A rigorous proof of \eqref{eq:LAN1} is given below. Since $\gamma_0<0$,  we have $\gamma_0+h<0$ for $h=(h_1,h_2,h_3)$ in a neighbourhood of $0$ so that the density $p_{\theta_0+h}$ vanishes outside $(-\infty,x_h)$ with $x_h=(\mu_0+h_2)-(\sigma_0+h_3)/(\gamma_0+h_1)$  the right endpoint of the distribution $\mathrm{GEV}(\gamma_0+h_1,\mu_0+h_2,\sigma_0+h_3)$. We also introduce $\underline{x}_h=\min_{0\leq u\leq 1} x_{uh}$. For all $x<\underline{x}_h$, the function $u\in[0,1]\mapsto \sqrt{p_{\theta_0+uh}(x)}$ is twice continuously differentiable, whence Taylor formula entails
\[
 \sqrt{p_{\theta_0+h}(x)}-\sqrt{p_{\theta_0}(x)}-\frac{1}{2}h^T \frac{\partial \ell}{\partial \theta}(\theta_0,x)\sqrt{p_{\theta_0}(x)}
 =\frac{1}{2}h^T\left(\frac{\partial^2\sqrt{p_{\theta}(x)}}{\partial \theta\partial \theta^T}_{\big| \theta=\theta_0+vh} \right) h
\]
for some $v=v(h,x)\in[0,1]$. Together with Equation \eqref{eq:LAN2}, the formula $(a+b)^2\leq 2(a^2+b^2)$ and Proposition \ref{prop3}, this  yields the upper bound
\begin{align*}
& \left[\frac{ \sqrt{p_{\theta_0+h}(x)}-\sqrt{p_{\theta_0}(x)}-\frac{1}{2}h^T \frac{\partial \ell}{\partial \theta}(\theta_0,x)\sqrt{p_{\theta_0}(x)} }{\|h\|} \right]^2\\
\leq &\frac{1}{32\|h\|^2}\left[p_{\theta_0+vh}(x)^2\left(h^T \frac{\partial \ell}{\partial \theta}(\theta_0+vh,x)\right)^4+4p_{\theta_0+vh}(x) \left( h^T\frac{\partial^2 \ell}{\partial  \theta\partial \theta^T}(\theta_0+vh,x)h\right)^2\right]\\
\leq &C\|h\|^2\Big[p_{\theta_0+vh}(x)^2\max(z(\theta_0+vh,x)^{\gamma_0-\varepsilon},z(\theta_0+vh,x)^{1+\varepsilon})^4 \\
&\hspace{1.5cm}+p_{\theta_0+vh}(x) \max(z(\theta_0+vh,x)^{2\gamma_0-\varepsilon},z(\theta_0+vh,x)^{1+\varepsilon})^2\Big]
\end{align*}
 for all $x<\underline{x}_h$ and $h$ small enough. This entails
\begin{equation}\label{eq:LAN1.1}
 \lim_{h\to 0}\int_{-\infty}^{\underline{x}_h} \left[\frac{ \sqrt{p_{\theta_0+h}(x)}-\sqrt{p_{\theta_0}(x)}-\frac{1}{2}h^T \frac{\partial \ell}{\partial \theta}(\theta_0,x)\sqrt{p_{\theta_0}(x)} }{\|h\|} \right]^2\mathrm{d}x=0.
\end{equation}
It remains to estimate the contribution of the integral between $\underline{x}_h$ and $+\infty$. Recall that $p_{\theta_0+h}(x)$ vanishes for $x\geq x_h$. We have
\begin{align*}
& \frac{1}{\|h\|^2}\int_{\underline{x}_h}^{+\infty}\left[\sqrt{p_{\theta_0}(x)}\right]^2dx=\frac{1}{\|h\|^2}\left[1-G_{\gamma_0}\left(\frac{\underline{x}_h-\mu_0}{\sigma_0}\right)\right],\\
& \frac{1}{\|h\|^2}\int_{\underline{x}_h}^{+\infty}\left[\sqrt{p_{\theta_0+h}(x)}\right]^2dx=\frac{1}{\|h\|^2}\left[1-G_{\gamma_0+h_1}\left(\frac{\underline{x}_h-\mu_0-h_2}{\sigma_0+h_3}\right)\right],\\
& \frac{1}{\|h\|^2}\int_{\underline{x}_h}^{+\infty}\left[h^T \frac{\partial \ell}{\partial \theta}(\theta_0,x)\sqrt{p_{\theta_0}(x)}\right]^2dx\leq \int_{\underline{x}_h}^{x_0} \left\| \frac{\partial \ell}{\partial \theta}(\theta_0,x)\right\|^2p_{\theta_0}(x)dx.
\end{align*}
The first and second integral converge to $0$ as $h\to 0$ because $\underline{x}_h-x_0=O(\|h\|)$ and $\gamma_0>-1/2$. The third integral converges also to $0$ because $\underline{x}_h\to x_0$ and $\gamma_0>-1/2$ so that the score is square integrable (its covariance matrix is $I_{\theta_0}$).
We deduce
\begin{equation}\label{eq:LAN1.2}
 \lim_{h\to 0}\int_{\underline{x}_h}^{+\infty} \left[\frac{ \sqrt{p_{\theta+h}(x)}-\sqrt{p_{\theta}(x)}-\frac{1}{2}h^T \frac{\partial \ell}{\partial \theta}(\theta,x)\sqrt{p_{\theta}(x)} }{\|h\|} \right]^2\mathrm{d}x=0.
\end{equation}
Equations \eqref{eq:LAN1.1} and \eqref{eq:LAN1.2} imply \eqref{eq:LAN1}.

The fact that differentiability in quadratic mean does not hold when $\gamma_0\leq -1/2$ is proved in 
B\"ucher and Segers \cite{BS16b} Appendix C. They observe that for $\gamma_0\leq -1/2$, 
\[
\liminf_{h\to 0} \|h\|^{-2} \int_{\mathbb{R}}^{+\infty}1_{\{p_{\theta_0}(x)=0\}}p_{\theta_0+h}(x)dx>0
\]
which rules out differentiability in quadratic mean. We omit further details here.
\end{proof}

Differentiability in quadratic mean implies that the score function is centered with finite variance equal to the information matrix, i.e.
\begin{equation}\label{eq:scorenul}
 \int_{\mathbb{R}} \dot\ell_{\theta_0}(x)p_{\theta_0}(x)dx=0\quad \mbox{and}\quad
 \int_{\mathbb{R}} \dot\ell_{\theta_0}(x)\dot\ell_{\theta_0}(x)^Tp_{\theta_0}(x)dx=I_{\theta_0}.
\end{equation}
Another important consequence of differentiability in quadratic mean is the local asymptotic normality property of the local score process. The following Corollary follows from Proposition~\ref{prop:DQM} by a direct application of Theorem 7.2 in Van der Vaart \cite{vdV98}.
\begin{corollary}\label{cor:LAN}
Assume that $F=\mathrm{GEV}(\gamma_0,\mu_0,\sigma_0)$  with $\gamma_0>-1/2$ and that the constants $a_m>0$, $b_m\in\bbR$ are given by \eqref{eq:constants}. Then the local log-likelihood process \eqref{eq:def_lllp} satisfies
\[
\widetilde{L}_{k,m}(h)=\widetilde{L}_{k,m}(0)+h^T\widetilde{\widetilde{G}}_{k,m}-\frac{1}{2}h^TI_{\theta_0}h+o_P(1)
\]
where
\[
\widetilde{\widetilde{G}}_{k,m}=\frac{1}{\sqrt k} \sum_{i=1}^k \frac{\partial
\ell}{\partial
\theta}\left(\theta_0,\frac{M_{i,m}-b_m}{a_m}\right)\stackrel{d}\longrightarrow
\mathcal{N}(0,I_{\theta_0}).
\]
\end{corollary}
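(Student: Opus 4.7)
The plan is to reduce Corollary~\ref{cor:LAN} to the classical LAN theorem for i.i.d. samples and then invoke Proposition~\ref{prop:DQM}. The key observation is that the specific choice \eqref{eq:constants} for $(a_m,b_m)$ is tailored so that $Y_i := (M_{i,m}-b_m)/a_m$ are \emph{exactly} i.i.d. with distribution $G_{\gamma_0}$, not merely approximately so. Indeed, a direct computation using stability of the GEV class under maxima shows that $M_{i,m}\sim \mathrm{GEV}(\gamma_0, b_m, a_m)$, so the normalized maxima have density $p_{\theta_0}$. This removes the misspecification entirely; no second order term and no bias can appear.

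In these coordinates the local log-likelihood \eqref{eq:def_lllp} reads, after the constant $-k\log a_m$ cancels in the difference,
\[
\widetilde L_{k,m}(h)-\widetilde L_{k,m}(0)=\sum_{i=1}^k \log\frac{p_{\theta_0+h/\sqrt k}(Y_i)}{p_{\theta_0}(Y_i)},
\]
so the statement becomes a pure i.i.d. LAN assertion. By Proposition~\ref{prop:DQM}, the hypothesis $\gamma_0>-1/2$ guarantees that the three-parameter GEV model is differentiable in quadratic mean at $\theta_0$ with score $\dot\ell_{\theta_0}=\partial\ell/\partial\theta(\theta_0,\cdot)$. Van der Vaart's Theorem 7.2, applied off the shelf, then delivers the expansion
\[
\sum_{i=1}^k \log\frac{p_{\theta_0+h/\sqrt k}(Y_i)}{p_{\theta_0}(Y_i)}=h^T\widetilde{\widetilde G}_{k,m}-\tfrac{1}{2}h^T I_{\theta_0}h+o_P(1),
\]
with $\widetilde{\widetilde G}_{k,m}=k^{-1/2}\sum_{i=1}^k \dot\ell_{\theta_0}(Y_i)$, which is precisely the claimed expansion.

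It remains to identify the limit law of $\widetilde{\widetilde G}_{k,m}$. Since DQM forces the score identities \eqref{eq:scorenul}, the summands are i.i.d., centered, and have covariance $I_{\theta_0}$; the ordinary multivariate central limit theorem therefore gives $\widetilde{\widetilde G}_{k,m}\stackrel{d}{\longrightarrow}\mathcal{N}(0,I_{\theta_0})$. There is no substantive obstacle in this argument: all the work is already contained in Proposition~\ref{prop:DQM} (handling the delicate case $-1/2<\gamma_0\leq -1/3$ where the support varies with $\theta$), and once DQM is granted, the corollary is a direct application of textbook LAN theory. The absence of a bias term, contrasted with Theorem~\ref{theo1}, is a manifestation of the fact that here $F$ is assumed to be exactly GEV.
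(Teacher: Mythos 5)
Your proof is correct and follows essentially the same route as the paper: the authors likewise note that the constants \eqref{eq:constants} make the normalized block maxima exactly i.i.d.\ $G_{\gamma_0}$, and then obtain the corollary from Proposition~\ref{prop:DQM} by a direct application of Theorem 7.2 in van der Vaart, with the limit law of $\widetilde{\widetilde{G}}_{k,m}$ coming from the central limit theorem and the score identities \eqref{eq:scorenul}. Your write-up merely spells out the reduction that the paper leaves implicit.
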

Note the similarity between Theorem~\ref{theo1} and Corollary~\ref{cor:LAN}.
In Theorem~\ref{theo1} however, the $o_P(1)$ is uniform on compact sets and the model misspecification $F\in \mathcal{D}(G_{\gamma_0})$ results in a bias term $\lambda b$ for the asymptotic distribution $\widetilde G_{k,m}$.

\subsection{The empirical quantile process associated to BM}
The starting point of the proof of Proposition \ref{prop:concave} and Theorem \ref{theo1} is to rewrite the local log-likelihood process \eqref{eq:def_lllp} in terms of the (normalized) empirical quantile process
\begin{equation}\label{eq:def_eqp}
Q_{k,m}(s)=\frac{M_{\lceil ks\rceil:k,m}-b_m}{a_m},\quad 0<s<1,
\end{equation}
where $M_{1:k,m}\leq \cdots\leq M_{k:k,m}$ are  the  order statistics of the block maxima sample $(M_{k,m})_{1\leq k\leq m}$  defined by \eqref{eq:def_bmax} and $\lceil x \rceil $ denotes the smallest integer larger than or equal to $x$. The local log-likelihood process~\eqref{eq:def_lllp} can be rewritten as
\begin{equation}\label{eq:def_lllp2}
\widetilde L_{k,m}(h)=k\int_0^1 \ell\left(\theta_0+\frac{h}{\sqrt{k}},Q_{k,m}(s)\right)ds.
\end{equation}
Convergence \eqref{eq:convBM} ensures the convergence of the empirical quantile process $Q_{k,m}$ to the ``true'' quantile function $Q_{\gamma_0}$ defined in \eqref{eq:def_qp}. The following  expansion of the empirical quantile process is taken from Ferreira and de Haan \cite{FdH15}, Theorem 2.1.
\begin{proposition}\label{prop1}
Assume conditions  \eqref{eq:2order} and \eqref{eq:blocksize}. For a specific choice of the second order auxiliary functions  $a$ and $A$ in~\eqref{eq:2order},
\begin{equation}\label{eq:prop1}
\sqrt{k}\left(Q_{k,m}(s)-Q_{\gamma_0}(s)\right)
= \frac{B_k(s)}{s(-\log s)^{\gamma_0+1}}+\lambda H_{\gamma_0,\rho}\left(\frac{1}{-\log s}\right)
+R_{k,m}(s)
\end{equation}
where $B_k$, $k\geq 1$, denotes an appropriate sequence of standard Brownian bridges and the remainder term $R_{k,m}$ satisfies, for $0<\varepsilon<1/2$,
\begin{equation}\label{eq:remainder}
R_{k,m}(s)= s^{-1/2-\varepsilon}(1-s)^{-1/2-\gamma_0-\rho-\varepsilon}o_P(1)
\end{equation}
uniformly for $s\in  \left[\frac{1}{k+1},\frac{k}{k+1}\right]$.
\end{proposition}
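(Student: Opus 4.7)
As the authors indicate, this statement is taken directly from Ferreira and de Haan \cite{FdH15}, Theorem 2.1, so my plan is to reproduce the main steps of their argument. The starting point is the probability integral transform: since $V=(-1/\log F)^{\leftarrow}$, the block maxima $M_{i,m}$ have the same distribution as $V(-m/\log U)$ with $U$ uniform on $(0,1)$. Passing to joint order statistics, one may assume $M_{\lceil ks\rceil:k,m} = V(m\,\xi_{k,s})$ where $\xi_{k,s} = -1/\log U_{\lceil ks\rceil:k}$ and $U_{1:k}\leq\cdots\leq U_{k:k}$ are the uniform order statistics. Choosing $b_m=V(m)$ and $a_m=a(m)$, the empirical quantile process becomes
\[
Q_{k,m}(s) \;=\; \frac{V(m\,\xi_{k,s})-V(m)}{a(m)}.
\]

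I would then insert the second order condition \eqref{eq:2order} in a uniform Drees-type form: for every small $\varepsilon>0$ there is a constant $C$ with
\[
\left| \frac{V(tx)-V(t)}{a(t)}-\frac{x^{\gamma_0}-1}{\gamma_0}-A(t)\,H_{\gamma_0,\rho}(x)\right| \;\leq\; C\,|A(t)|\,\max\bigl(x^{\gamma_0+\rho+\varepsilon},\,x^{\gamma_0+\rho-\varepsilon}\bigr),
\]
uniformly in $x$ over suitable ranges, provided the auxiliary functions $a$ and $A$ are chosen appropriately. Applying this with $t=m$, $x=\xi_{k,s}$, and using $(\xi_{k,s}^{\gamma_0}-1)/\gamma_0 = Q_{\gamma_0}(U_{\lceil ks\rceil:k})$, one obtains the decomposition
\[
Q_{k,m}(s) \;=\; Q_{\gamma_0}(U_{\lceil ks\rceil:k})+A(m)\,H_{\gamma_0,\rho}(\xi_{k,s})+\mathrm{rem}_1(s).
\]

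Next I would linearise the first term with the classical Koml\'os--Major--Tusn\'ady / Cs\"org\H{o}--R\'ev\'esz weighted approximation of the uniform quantile process: there exist standard Brownian bridges $B_k$ such that $\sqrt{k}\bigl(U_{\lceil ks\rceil:k}-s\bigr)$ is uniformly close to $B_k(s)$ with a power weight in $[s(1-s)]$. Since $Q'_{\gamma_0}(s)=1/[s(-\log s)^{\gamma_0+1}]$, a first-order Taylor expansion gives
\[
\sqrt{k}\bigl(Q_{\gamma_0}(U_{\lceil ks\rceil:k})-Q_{\gamma_0}(s)\bigr) \;=\; \frac{B_k(s)}{s(-\log s)^{\gamma_0+1}}+\mathrm{rem}_2(s).
\]
For the bias, combining $\sqrt{k}\,A(m)\to\lambda$ with $\xi_{k,s}\to 1/(-\log s)$ and continuity of $H_{\gamma_0,\rho}$ produces the term $\lambda\,H_{\gamma_0,\rho}\bigl(1/(-\log s)\bigr)$ plus a third remainder $\mathrm{rem}_3(s)$.

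The main obstacle is the uniform control of $\mathrm{rem}_1+\mathrm{rem}_2+\mathrm{rem}_3$ on $[1/(k+1),\,k/(k+1)]$ with the precise weight $s^{-1/2-\varepsilon}(1-s)^{-1/2-\gamma_0-\rho-\varepsilon}$ announced in \eqref{eq:remainder}. Near $s=0$, the order statistic $U_{\lceil ks\rceil:k}$ is small and $\xi_{k,s}$ stays bounded, so the Hungarian approximation rate dictates the $s^{-1/2-\varepsilon}$ weight. Near $s=1$, however, $\xi_{k,s}$ can be arbitrarily large and the Drees bound contributes a factor of order $\xi_{k,s}^{\gamma_0+\rho+\varepsilon}$; balancing this against the blow-up of $U_{\lceil ks\rceil:k}-s$ near $1$ is what forces the exponent $1/2+\gamma_0+\rho+\varepsilon$ on the $(1-s)$ factor. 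This delicate weighted bookkeeping, together with verifying that the auxiliary functions can be chosen so that the residual $o(A(t))$ in \eqref{eq:2order} remains negligible uniformly, is the technical heart of the argument.
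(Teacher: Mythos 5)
Your outline is faithful to the way this result is actually established, but note that the paper itself does not prove Proposition~\ref{prop1} at all: it is imported verbatim from Ferreira and de Haan \cite{FdH15}, Theorem~2.1, with the ``specific choice'' of $a$ and $A$ coming from their Lemma~4.2, so there is no internal proof to compare against. Measured against that source, your decomposition (Fr\'echet/uniform representation $M_{\lceil ks\rceil:k,m}\stackrel{d}{=}V(m\,\xi_{k,s})$, a uniform Drees-type second order inequality applied at $t=m$, the weighted Brownian-bridge approximation of the uniform quantile process combined with $Q_{\gamma_0}'(s)=1/[s(-\log s)^{\gamma_0+1}]$, and $\sqrt{k}A(m)\to\lambda$ for the bias) reproduces exactly the architecture of their argument, and your heuristic for where the exponents $-1/2-\varepsilon$ and $-1/2-\gamma_0-\rho-\varepsilon$ come from is sound. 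The caveat is that what you defer as ``delicate weighted bookkeeping'' is not a routine verification but the entire content of the cited theorem: one must (i) show the auxiliary functions can be chosen so the $o(A(t))$ residual in \eqref{eq:2order} is uniform over the random range of $\xi_{k,s}$, including $s$ near $k/(k+1)$ where $\xi_{k,s}$ is of order $k$ and only controlled in probability (as in Lemma~\ref{auxlema}); and (ii) justify the Taylor linearisation of $Q_{\gamma_0}$ at the extreme order statistics, where $U_{\lceil ks\rceil:k}/s$ and $(1-U_{\lceil ks\rceil:k})/(1-s)$ are merely bounded above and below rather than close to one, which requires the weighted (Cs\"org\H{o}--Horv\'ath type) approximation with the precise weights rather than a plain KMT statement. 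So your proposal is a correct reconstruction of the strategy but, as a standalone proof, stops exactly where the real work begins; citing \cite{FdH15} for those estimates, as the paper does, is the honest way to close it.
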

\begin{remark} For Proposition \ref{prop1}, the auxiliary functions $a$ and $A$ have to be specially chosen for establishing uniform second order regular variation bounds refining \eqref{eq:2order}, see Lemma 4.2 in \cite{FdH15}. However, this choice is useful for the proofs only and is irrelevant for the statements of the main results in Section \ref{sec:main}.
\end{remark}

The following Proposition provides useful technical bounds for the proof of the main results.

\begin{proposition}\label{prop2}
    Assume conditions \eqref{eq:2order} and \eqref{eq:blocksize}. Then, as $n\to\infty$,
    \[
    (-\log s)^{\gamma_0}\left(1+(\gamma_0+h_1/\sqrt{k})\frac{Q_{k,m}(s)-h_2/\sqrt{k}}{1+h_3/\sqrt{k}}\right)=e^{O_P(1)}
    \]
    and
    \[
    (-\log s)^{-1}\left(1+(\gamma_0+h_1/\sqrt k)\frac{Q_{k,m}(s)-h_2/\sqrt{k}}{1+h_3/\sqrt{k}}\right)^{-1/(\gamma_0+h_1/\sqrt{k})}=e^{O_P(1)}
    \]
    uniformly for $s\in  \left[\frac{1}{k+1},\frac{k}{k+1}\right]$ and $h\in H_n$ as in Proposition~\ref{prop:concave}.
\end{proposition}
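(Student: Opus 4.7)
The plan is to reduce everything to the deterministic identity $1+\gamma_0 Q_{\gamma_0}(s)=(-\log s)^{-\gamma_0}$ and then apply the quantile expansion of Proposition~\ref{prop1} to control the perturbations induced by $h$ and by the empirical fluctuation $Q_{k,m}-Q_{\gamma_0}$. Writing $\gamma_n:=\gamma_0+h_1/\sqrt k$ and $\tau:=1+h_3/\sqrt k$ and isolating the $\gamma_0 Q_{\gamma_0}$ contribution, I would first rewrite the first expression as
\[
(-\log s)^{\gamma_0}\Bigl(1+\gamma_n\tfrac{Q_{k,m}(s)-h_2/\sqrt k}{\tau}\Bigr)
= \tau^{-1}\bigl[1 + \Delta_1 + \Delta_2 + \Delta_3 + \Delta_4\bigr],
\]
where $\Delta_1:=(\tau-1)(-\log s)^{\gamma_0}$, $\Delta_2:=\gamma_0(-\log s)^{\gamma_0}(Q_{k,m}-Q_{\gamma_0})$, $\Delta_3:=(h_1/\sqrt k)(-\log s)^{\gamma_0}Q_{k,m}$ and $\Delta_4:=-\gamma_n h_2(-\log s)^{\gamma_0}/\sqrt k$. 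The desired bound $e^{O_P(1)}$ then reduces to showing that each $\Delta_i$ is $O_P(1)$ uniformly on the prescribed domain.

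The deterministic perturbations $\Delta_1,\Delta_3,\Delta_4$ are handled directly. On $s\in[1/(k+1),k/(k+1)]$, $(-\log s)^{\gamma_0}$ is bounded by $(\log(k+1))^{\gamma_0}$ when $\gamma_0\geq 0$ and by $(k+1)^{|\gamma_0|}$ when $\gamma_0<0$; in $\Delta_3$ I would use $(-\log s)^{\gamma_0}Q_{\gamma_0}(s)=(1-(-\log s)^{\gamma_0})/\gamma_0$ (or its Gumbel limit for $\gamma_0=0$) together with~\eqref{eq:prop1} to absorb $Q_{k,m}-Q_{\gamma_0}$ as a lower-order correction. Combined with $|h_i|\leq r_n=O(k^\delta)$ and the assumption $\delta<\min(1/2,\gamma_0+1/2)$ from~\eqref{eq:condition_rn}, each of $\Delta_1,\Delta_3,\Delta_4$ becomes $o(1)$ uniformly, which is precisely why the upper bound on $\delta$ must depend on the sign of $\gamma_0$.

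The main term is $\Delta_2$, which after inserting Proposition~\ref{prop1} reads
\[
\Delta_2 = \frac{\gamma_0}{\sqrt k}\Bigl[\tfrac{B_k(s)}{s(-\log s)}+\lambda(-\log s)^{\gamma_0}H_{\gamma_0,\rho}\bigl(1/(-\log s)\bigr)+(-\log s)^{\gamma_0}R_{k,m}(s)\Bigr].
\]
Using the explicit form of $H_{\gamma_0,\rho}(x)$, the second summand is a linear combination of $(-\log s)^{-\rho}$ and constants, hence polynomial in $\log k$ on the range, so after division by $\sqrt k$ it is $o(1)$. For the remainder, $R_{k,m}(s)=s^{-1/2-\varepsilon}(1-s)^{-1/2-\gamma_0-\rho-\varepsilon}o_P(1)$ yields its worst contribution at $s=k/(k+1)$, giving $(k+1)^{\rho+\varepsilon}o_P(1)=o_P(1)$ for small $\varepsilon>0$ since $\rho\leq 0$. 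For the Brownian bridge term I would invoke a Chibisov--O'Reilly-type bound $\sup_{s\in[1/(k+1),k/(k+1)]}|B_k(s)|/(s(1-s))^{1/2-\eta}=O_P(1)$ for small fixed $\eta>0$ and balance the weights against $1/(s(-\log s))$ to obtain $B_k(s)/(\sqrt k\,s(-\log s))=O_P(1)$ uniformly.

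For the second identity, once the first has been established in the form $1+\gamma_n(Q_{k,m}-h_2/\sqrt k)/\tau=(-\log s)^{-\gamma_0}e^{O_P(1)}$, raising to the power $-1/\gamma_n$ and multiplying by $(-\log s)^{-1}$ gives $(-\log s)^{\gamma_0/\gamma_n-1}e^{O_P(1)/\gamma_n}$. The exponent $\gamma_0/\gamma_n-1=-h_1/(\sqrt k\,\gamma_n)=O(r_n/\sqrt k)$, and since $|\log(-\log s)|=O(\log k)$ on the given range, the factor $(-\log s)^{\gamma_0/\gamma_n-1}=e^{O(r_n\log k/\sqrt k)}=e^{o(1)}$ by~\eqref{eq:condition_rn}, concluding the proof (the case $\gamma_0=0$ is handled by the Gumbel-form limit $(1+\gamma x)^{-1/\gamma}\to e^{-x}$). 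The main technical obstacle throughout is the uniform control of the Brownian bridge contribution at the upper boundary $s\to 1$, where $1/(s(-\log s))\sim 1/(1-s)$ must be carefully counter-balanced against the Chibisov--O'Reilly weight $(s(1-s))^{1/2-\eta}$ to remain bounded in probability rather than slowly growing.
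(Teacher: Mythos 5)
Your algebraic decomposition is correct (indeed $(-\log s)^{\gamma_0}+\gamma_0(-\log s)^{\gamma_0}Q_{\gamma_0}(s)=1$), but the reduction ``show each $\Delta_i=O_P(1)$'' cannot prove the proposition, because $e^{O_P(1)}$ is a \emph{two-sided} statement: the quantity must be bounded above \emph{and} bounded away from zero in probability, uniformly in $s$ and $h$. Bounding $1+\sum_i\Delta_i$ in absolute value by an $O_P(1)$ quantity does not exclude that it comes arbitrarily close to $0$ (or goes negative), and this lower bound is precisely the delicate content of Proposition~\ref{prop2} — it is what guarantees finiteness and smoothness of the local log-likelihood in Lemma~\ref{lem:finite}. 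Nor can you repair this by upgrading $\Delta_2$ to $o_P(1)$: near the upper boundary, with $1-s\asymp 1/k$, one has $B_k(s)\asymp_P k^{-1/2}$ and $s(-\log s)\asymp 1/k$, so $\gamma_0 B_k(s)/(\sqrt k\, s(-\log s))$ is a nondegenerate $O_P(1)$ random variable; the top order statistics fluctuate at relative order one, so an additive expansion around $Q_{\gamma_0}$ plus boundedness arguments cannot yield the lower bound. This is why the paper takes a different route: it writes $M_{i:k,m}\stackrel{d}{=}V(mZ_{i:k})$ with $Z$ standard Fr\'echet, uses the uniform second-order bounds of Lemma~4.2 of \cite{FdH15}, and keeps the boundary randomness inside the multiplicative main term $\left((-\log s)Z_{\lceil ks\rceil:k}\right)^{\gamma_0}$, which is $e^{O_P(1)}$ two-sidedly by the Shorack--Wellner inequalities (Lemma~\ref{auxlema}); all the remaining terms are then shown to be $o_P(1)$, not merely $O_P(1)$.

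Two further steps in your plan are inaccurate as stated. First, the Chibisov--O'Reilly/LIL weight \eqref{eq:lil} does not give $B_k(s)/(\sqrt k\,s(-\log s))=O_P(1)$: inserting $|B_k(s)|=O_P\bigl((s(1-s))^{1/2-\eta}\bigr)$ yields, at $1-s\asymp 1/k$, a bound of order $k^{\eta}$, not $O_P(1)$ (the uniform $O_P(1)$ bound is true, but requires e.g.\ the representation $B(s)=(1-s)W\!\left(s/(1-s)\right)$ and Brownian scaling, not the weighted sup bound). Second, the claim that $(-\log s)^{\gamma_0}H_{\gamma_0,\rho}\left(1/(-\log s)\right)$ is polynomial in $\log k$ on the range fails for $\gamma_0<0$: since $H_{\gamma_0,\rho}(x)$ tends to a constant as $x\to\infty$ in that case, the product is of order $(-\log s)^{\gamma_0}\asymp k^{-\gamma_0}$ near $s=1$; the conclusion $o(1)$ after dividing by $\sqrt k$ survives only because $\gamma_0>-1/2$, and this should be said explicitly. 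Your passage from the first display to the second (the exponent $\gamma_0/\gamma_n-1=O(r_n/\sqrt k)$ against $\log(-\log s)=O(\log k)$) is fine and matches the paper, but it presupposes the two-sided form of the first display, so it inherits the gap above.
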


For the proof of Proposition~\ref{prop2}, we need the following Lemma.
\begin{lemma}\label{auxlema} Let $Z_{1:k}<\ldots<Z_{k:k}$ be the order statistics of i.i.d random variables $Z_1,\ldots,Z_k$ with standard Fr\'echet distribution. Then, 
    \[
    \log\left\{(-\log s)Z_{\lceil ks\rceil:k}\right\}=O_P(1)
    \]
    where the $O_P(1)-$term is uniform for $s\in  \left[\frac{1}{k+1},\frac{k}{k+1}\right]$.
\end{lemma}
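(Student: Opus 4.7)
My plan is to reduce to uniform order statistics and then apply classical Daniels-type inequalities for the uniform empirical process. Let $U_i = \exp(-1/Z_i)$, so that $U_1,\ldots,U_k$ are i.i.d.\ uniform on $(0,1)$ and $Z_{j:k} = -1/\log U_{j:k}$. Then
\[
(-\log s)\,Z_{\lceil ks\rceil:k} = \frac{-\log s}{-\log U_{\lceil ks\rceil:k}},
\]
so it suffices to show that this ratio is bounded above and bounded away from $0$ in probability, uniformly for $s \in [1/(k+1),k/(k+1)]$.

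The main tool is the bilateral Daniels inequality: for every $\varepsilon>0$ there is a constant $C=C_\varepsilon>1$, independent of $k$, such that with probability at least $1-\varepsilon$,
\[
 C^{-1}\le \frac{U_{j:k}}{j/(k+1)}\le C\quad\text{ and }\quad C^{-1}\le \frac{1-U_{j:k}}{(k-j+1)/(k+1)}\le C \qquad \text{for all } 1\le j\le k.
\]
The first inequality is a standard consequence of Daniels's (1945) classical bound on the uniform empirical process; the second follows from the first by the symmetry $U_i\mapsto 1-U_i$ mapping uniform order statistics to their reverse. Either inequality alone would be insufficient: the first becomes vacuous when $U_{j:k}$ is close to $1$ (i.e.\ $j$ close to $k$), while the second becomes vacuous when $U_{j:k}$ is close to $0$ (i.e.\ $j$ close to $1$).

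On this good event, elementary estimates on $-\log u$ (namely $-\log u=\log(1/u)$ and $-\log u\sim 1-u$ as $u\to 1^-$) give that $-\log U_{j:k}$ is, up to multiplicative constants depending only on $C$, of order $\log((k+1)/j)$ for $j\le k/2$ and of order $(k-j+1)/(k+1)$ for $j>k/2$. A completely parallel deterministic calculation with $j=\lceil ks\rceil$ shows that $-\log s$ has the same respective orders on $s\in((j-1)/k,j/k]\cap[1/(k+1),k/(k+1)]$, using $\log(k/j)\le -\log s\le \log(k/(j-1))$ for $j\ge 2$ and $-\log s\sim 1-s$ near $s=1$. Consequently the ratio above lies in a fixed compact subset of $(0,\infty)$ on the good event, which yields the desired $O_P(1)$ conclusion. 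The main technical point is the careful matching of scales in the two boundary regimes; once both Daniels inequalities are available, this is a routine computation.
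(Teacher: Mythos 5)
Your proof is correct and follows essentially the same route as the paper: reduce to uniform order statistics and invoke the two-sided Daniels-type bounds comparing $U_{j:k}$ with $j/(k+1)$ and $1-U_{j:k}$ with $(k-j+1)/(k+1)$ (these are precisely the Shorack--Wellner inequalities the paper cites), then conclude by elementary logarithm comparisons split into the two regimes $s\le 1/2$ and $s\ge 1/2$. The only cosmetic difference is that you compare $-\log U_{\lceil ks\rceil:k}$ and $-\log s$ through the intermediate quantity $j/(k+1)$ rather than directly through $s$, which changes nothing of substance.
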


\begin{proof}[Proof of Lemma~\ref{auxlema}]
    An equivalent statement is, with $U$  standard uniform,
    \[
    \log\left\{\frac{-\log U_{\lceil ks\rceil:k}}{-\log s}\right\}=O_P(1).
    \]
    We use Shorack and Wellner \cite{SW86} (inequality 1 on p.419): for some $M>1$
    \begin{equation}\label{auxlema1}
    \frac 1 M\leq \frac{U_{\lceil ks\rceil:k}}{s}\leq M\qquad \text{ for }\qquad  \frac 1 {k+1} \leq s <1,
    \end{equation}
    \begin{equation}\label{auxlema2}
    \frac 1 M\leq \frac{1-U_{\lceil ks\rceil:k}}{1-s}\leq M\qquad \text{ for }\qquad  0< s\leq \frac k {k+1}.
    \end{equation}
    Relation \eqref{auxlema1} implies, for $s\geq 1/(k+1)$,
    \[
    1-\frac{\log M}{-\log s}\leq \frac{-\log U_{\lceil ks\rceil:k}}{-\log s}\leq 1+ \frac{\log M}{-\log s}.
    \]
    Both sides are bounded for $0<s\leq 1/2$. Relation \eqref{auxlema2} implies, for $s\leq k/(k+1)$,
    \begin{multline*}
    \frac{1-s}{-\log s}\frac{1-U_{\lceil ks\rceil:k}}{1-s}\leq \frac{-\log U_{\lceil ks\rceil:k}}{-\log s} \\
   \leq \frac{-\log\left\{1-\left(1-U_{\lceil ks\rceil:k}\right)\right\}}{-\log s}\leq \frac{1-U_{\lceil ks\rceil:k}}{1-s}\,\frac 1{U_{\lceil ks\rceil:k}}\frac{1-s}{-\log s}.
    \end{multline*}
    Both sides are bounded for $1/2\leq s<1$.
\end{proof}

\begin{proof}[Proof of Proposition \ref{prop2}]
    Let $Z$ be a unit Fr\'echet random variable, i.e. with distribution function $F(x)=e^{-1/x}$, $x>0$, and $\left\{ Z_{i:k}\right\}_{i=1}^k$ be the order statistics
    from the associated i.i.d. sample of size $k$, $Z_1,\ldots,Z_k$. Note that $M_{i:k,m}=^d V(mZ_{i:k})$. From Lemma 4.2 in \cite{FdH15},
    \[
    1+\gamma \frac{V(mZ_{\lceil ks\rceil:k})-b_m}{\sigma}=1+\gamma \frac{a^0_m}{\sigma}\frac{V(mZ_{\lceil ks\rceil:k})-b_m}{a^0_m}
    \]
    is bounded (above and below) by
    \[
    Z_{\lceil ks\rceil:k}^{\gamma_0}+\left(\gamma \frac{a^0_m}{\sigma}-\gamma_0\right)\frac{Z_{\lceil ks\rceil:k}^{\gamma_0}-1}{\gamma_0}+\gamma \frac{a^0_m}{\sigma} A_0(m)H_{\gamma_0,\rho}\left(Z_{\lceil ks\rceil:k}\right)\pm\varepsilon Z_{\lceil ks\rceil:k}^{\gamma_0+\rho\pm\delta}A_0(m)
    \]
    for each $\varepsilon,\delta>0$ provided $k$ and $m$ are large enough. Hence,
    \[
    (-\log s)^{\gamma_0}\left\{1+\gamma \frac{a^0_m}{\sigma}\left(\frac{V(mZ_{\lceil ks\rceil:k})-b_m}{a^0_m}-\frac{\mu-b_m}{a^0_m}\right)\right\}
    \]
    is bounded (above and below) by,
    \begin{multline}
    \left((-\log s)Z_{\lceil ks\rceil:k}\right)^{\gamma_0}+\left(\gamma \frac{a^0_m}{\sigma}-\gamma_0\right)\frac{\left((-\log s)Z_{\lceil ks\rceil:k}\right)^{\gamma_0}-(-\log s)^{\gamma_0}}{\gamma_0}\\
    +\gamma \frac{a^0_m}{\sigma} A_0(m)(-\log s)^{\gamma_0}H_{\gamma_0,\rho}\left(Z_{\lceil ks\rceil:k}\right)-\gamma \frac{a^0_m}{\sigma} (-\log s)^{\gamma_0}\frac{\mu-b_m}{a^0_m}\\
    \pm\varepsilon\left((-\log s)Z_{\lceil ks\rceil:k}\right)^{\gamma_0}Z_{\lceil ks\rceil:k}^{\rho\pm\delta}A_0(m).\label{lemmexpans}
    \end{multline}
    Applying Lemma \ref{auxlema}, the first term in \eqref{lemmexpans} is bounded (above and below) uniformly in $s$, in probability.

    It remains to verify that the other terms are $o_p(1)$ uniformly in $s$.    Note that
    \[
    \sup_{(k+1)^{-1}\leq s\leq k(k+1)^{-1}}\frac{(-\log s)^{\gamma_0}}{k^{1/2-\delta}}=\left\{\begin{array}{ll}
    O\left(\frac{(\log k)^{\gamma_0}}{k^{1/2-\delta}}\right),& \gamma_0>0 \\
    O\left(k^{-1/2-\gamma_0+\delta}\right),& \gamma_0\leq 0 .
    \end{array}\right.
    \]
    Using this with $\delta<\min(1/2,\gamma_0+1/2)$, Lemma \ref{auxlema} and \eqref{eq:condition_rn}, the second term in \eqref{lemmexpans} is $o_p(1)$ uniformly in $s$. For the third term, by
    \[
    A_0(m)(-\log s)^{\gamma_0}H_{\gamma_0,\rho}\left(Z_{\lceil ks\rceil,k}\right)
    =\sqrt k A_0(m)\frac{(-\log s)^{\gamma_0}}{\sqrt k}H_{\gamma_0,\rho}\left(Z_{\lceil ks\rceil,k}\right)
    \]
    and,
    \[
    \sup_{(k+1)^{-1}\leq s\leq k(k+1)^{-1}}\frac{(-\log s)^{\gamma_0}}{\sqrt k}H_{\gamma_0,\rho}\left(\frac1{-\log s}\right)=O\left((\log k)^\xi k^{-1/2-(\gamma_0\vee 0)}\right)
    \]
    for some $\xi\in\bbR$, it follows that it is also $o_p(1)$ uniformly in $s$. The last two terms follow similarly.
    
    For the second statement just note that 
    \[
    (-\log s)^{-\gamma_0/(\gamma_0+h_1/\sqrt{k})}=(-\log s)^{-1}(-\log s)^{h_1/(\gamma_0\sqrt{k})(1+o(1))}
    \]
    where the second factor converges to 1 uniformly in $s$.
\end{proof}

%

The following auxiliary result closely related to Proposition~\ref{prop2}  will be useful in our proofs of Proposition~\ref{prop:concave} and Theorem~\ref{theo1}.

\begin{lemma}\label{lem:boundQ0}
	As $n\to\infty$,
	\[
	(-\log s)^{-1}\left(1+(\gamma_0+h_1/\sqrt{k})\frac{Q_{\gamma_0}(s)-h_2/\sqrt{k}}{1+h_3/\sqrt{k}}\right)^{-1/(\gamma_0+h_1/\sqrt{k})}=1+o(1),
	\]
	uniformly for $s\in\left[\frac{1}{k+1},\frac{k}{k+1}\right]$ and $h\in H_n$ as in Proposition~\ref{prop:concave}.
\end{lemma}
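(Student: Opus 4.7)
Let $L := -\log s$. The plan is to exploit the elementary identity $1+\gamma_0 Q_{\gamma_0}(s) = L^{-\gamma_0}$ which implies that at $h=0$ the expression equals exactly $L^{-1} \cdot L = 1$. For general $h \in H_n$ I will factor out this value and bound the perturbation. Assume first $\gamma_0 \neq 0$. Substituting $Q_{\gamma_0}(s) = (L^{-\gamma_0}-1)/\gamma_0$ and collecting terms, a direct algebraic computation yields
\[
1+\gamma\frac{Q_{\gamma_0}(s)-h_2/\sqrt{k}}{1+h_3/\sqrt{k}} = L^{-\gamma_0}\,\eta(s,h), \quad \eta(s,h) := \frac{1+k^{-1/2}[L^{\gamma_0}h_3 + (1-L^{\gamma_0})h_1/\gamma_0 - \gamma L^{\gamma_0}h_2]}{1+h_3/\sqrt{k}},
\]
so the lemma reduces to showing $L^{\gamma_0/\gamma - 1}\eta(s,h)^{-1/\gamma} = 1 + o(1)$ uniformly.

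Next I bound $\eta$. On $L \in [\log\tfrac{k+1}{k},\log(k+1)]$, one has $|L^{\gamma_0}| \leq (\log(k+1))^{\gamma_0}$ when $\gamma_0 \geq 0$, and $|L^{\gamma_0}| = L^{-|\gamma_0|} = O(k^{|\gamma_0|})$ (maximal near $L \sim 1/k$) when $\gamma_0 < 0$. Combined with $\|h\| \leq r_n = O(k^\delta)$ and the restriction $\delta < \gamma_0 + 1/2$, every numerator term is of order $k^{\delta-1/2}$ (times a logarithmic factor) when $\gamma_0\geq 0$, and of order $k^{|\gamma_0|+\delta-1/2}$ when $\gamma_0<0$, and is therefore $o(1)$ uniformly in both cases since $|\gamma_0|\vee 0+\delta<1/2$ under the hypothesis. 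The denominator $1+h_3/\sqrt{k}$ is trivially $1+o(1)$, so $\eta(s,h) = 1 + o(1)$ uniformly.

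It remains to bound the two factors. Since $\gamma_0/\gamma - 1 = -h_1/(\gamma\sqrt{k}) = O(k^{\delta - 1/2})$ and $|\log L| \leq \log(k+1)$, one has $|(\gamma_0/\gamma - 1)\log L| = O(k^{\delta - 1/2}\log k) = o(1)$, hence $L^{\gamma_0/\gamma - 1} = 1 + o(1)$. Because $\eta = 1 + o(1)$ uniformly and $1/\gamma$ stays bounded (as $\gamma_0 \neq 0$), also $\eta^{-1/\gamma} = 1 + o(1)$, completing the proof in this case. The boundary case $\gamma_0 = 0$ must be treated separately since the factorization degenerates ($L^{-\gamma_0}\equiv 1$): with $\gamma = h_1/\sqrt{k}$, $Q_0(s) = -\log L$, and $w := (Q_0(s) - h_2/\sqrt{k})/(1+h_3/\sqrt{k})$, one expands $(1/\gamma)\log(1+\gamma w) = w + O(\gamma w^2)$ and checks that $-\log L - w = O((\log k + 1)k^{\delta-1/2}) = o(1)$ and $\gamma w^2 = O(k^{\delta - 1/2}(\log k)^2) = o(1)$ uniformly, giving the same conclusion. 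The main obstacle throughout is the growth $|L^{\gamma_0}| = O(k^{|\gamma_0|})$ in the $\gamma_0 < 0$ regime near $L \sim 1/k$, which is controlled precisely by the sharp assumption $\delta < \gamma_0 + 1/2$.
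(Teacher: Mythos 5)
Your proof is correct and takes essentially the same route as the paper's: expand $1+(\gamma_0+h_1/\sqrt{k})\frac{Q_{\gamma_0}(s)-h_2/\sqrt{k}}{1+h_3/\sqrt{k}}$ around $(-\log s)^{-\gamma_0}$, show the relative perturbation is uniformly $o(1)$ using $\|h\|=O(k^\delta)$ with $\delta<\min(1/2,\gamma_0+1/2)$, and absorb the change of exponent from $-1/\gamma_0$ to $-1/(\gamma_0+h_1/\sqrt{k})$ through a logarithmic factor, exactly as in the last step of the proof of Proposition~\ref{prop2}. You simply spell out the bounds the paper delegates to that proof and treat the case $\gamma_0=0$ separately, which the paper leaves implicit.
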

\begin{proof}
 Using \eqref{def:gmusigmah} and expanding,
 	\begin{eqnarray*}
 	&&1+(\gamma_0+h_1/\sqrt{k})\frac{Q_{\gamma_0}(s)-h_2/\sqrt{k}}{1+h_3/\sqrt{k}}\\
 	&=&(-\log s)^{-\gamma_0}+O\left(\max(h_1,h_2,h_3)\frac{1+(-\log s)^{\gamma_0}}{\sqrt k}\right)
 	\end{eqnarray*}
 	where the O-term is in fact a  o-term uniformly in $s$ as seen in the proof of Corollary \ref{prop2}. The result follows as in the last part of the proof of Corollary \ref{prop2} for the second statement.
\end{proof}

\subsection{Proofs of Proposition~\ref{prop:concave}, Theorems~\ref{theo1}~and~\ref{theo2}}

Before proving Proposition \ref{prop:concave}, we first check that, with high probability the local log-likelihood process $\widetilde L_{k,m}$ is finite and twice differentiable on $H_n$.
\begin{lemma}\label{lem:finite}
Under the assumptions of Proposition \ref{theo1}, we have
\begin{equation}\label{eq:finite}
\lim_{n\to \infty}\bbP\left[\widetilde L_{k,m}(h)>-\infty\quad \mbox{for all } h\in H_n\right]=1.
\end{equation}
Furthermore, $\widetilde L_{k,m}$ is smooth on $H_n$ as soon as it is finite on $H_n$.
\end{lemma}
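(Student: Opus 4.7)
The plan is to read off both conclusions from Proposition~\ref{prop2}, once the finiteness condition has been rewritten in terms of the empirical quantile process $Q_{k,m}$. By the definition \eqref{eq:defggamma} of $g_\gamma$ and the parametrization \eqref{def:gmusigmah}, a single summand $\ell\bigl(\theta_0+h/\sqrt k,(M_{i,m}-b_m)/a_m\bigr)$ is finite precisely when
\[
1+(\gamma_0+h_1/\sqrt k)\,\frac{(M_{i,m}-b_m)/a_m-h_2/\sqrt k}{1+h_3/\sqrt k}>0,
\]
and since every order statistic satisfies $(M_{i:k,m}-b_m)/a_m=Q_{k,m}(s)$ for some $s\in[1/(k+1),k/(k+1)]$, finiteness of $\widetilde L_{k,m}(h)$ for all $h\in H_n$ reduces to uniform strict positivity of
\[
1+(\gamma_0+h_1/\sqrt k)\,\frac{Q_{k,m}(s)-h_2/\sqrt k}{1+h_3/\sqrt k}
\]
over $s\in[1/(k+1),k/(k+1)]$ and $h\in H_n$.

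Next I would invoke Proposition~\ref{prop2}, which expresses this quantity as $(-\log s)^{-\gamma_0}\,e^{O_P(1)}$, with the $O_P(1)$ term uniform in the same ranges of $s$ and $h$. Since $e^{O_P(1)}$ is bounded away from zero in probability and $(-\log s)^{-\gamma_0}>0$, the expression itself is strictly positive with probability tending to one, which yields \eqref{eq:finite}.

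For the smoothness assertion, I would simply note that for fixed $x\in\bbR$ the map $\theta\mapsto\ell(\theta,x)$ is $C^\infty$ on the open set $\{\theta\in\Theta:1+\gamma(x-\mu)/\sigma>0\}$ (the limiting case $\gamma=0$ is smooth via the usual analytic extension of $g_\gamma$). If $\widetilde L_{k,m}$ is finite at every $h\in H_n$, then for each $i$ and each $h\in H_n$ the point $\theta_0+h/\sqrt k$ lies in that open set for $x=(M_{i,m}-b_m)/a_m$; since $H_n$ is an open ball, each summand, and hence the sum $\widetilde L_{k,m}$, is smooth on $H_n$. I do not anticipate any serious obstacle here beyond the clean identification of the finiteness condition with the positivity statement controlled by Proposition~\ref{prop2}; the smoothness part is essentially a tautology given how $g_\gamma$ is defined outside its support.
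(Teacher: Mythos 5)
Your proposal is correct and follows essentially the same route as the paper: reduce finiteness to positivity of $1+(\gamma_0+h_1/\sqrt k)\frac{Q_{k,m}(s)-h_2/\sqrt k}{1+h_3/\sqrt k}$, invoke the uniform $e^{O_P(1)}$ estimate of Proposition~\ref{prop2} to get this with probability tending to one, and obtain smoothness from the smoothness of $\theta\mapsto\ell(\theta,x)$ wherever it is finite. The only (harmless) difference is that you verify positivity uniformly over all $s\in[1/(k+1),k/(k+1)]$, whereas the paper checks it at the extreme order statistic, which is the binding one.
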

\begin{proof}
In view of Equations \eqref{eq:defggamma},\eqref{eq:defGEV} and \eqref{eq:def_lllp},  $\widetilde L_{k,m}(h)$ is finite on $H_n$ as soon as
\[
1+ (\gamma_0+h_1/\sqrt k)
\frac{Q_{k,m}\left(1/(k+1)\right) -
h_2/\sqrt{k}}{1+h_3/\sqrt{k}}>0\quad \mbox{for all} \ h=(h_1,h_2,h_3)\in H_n.
\]
Proposition \ref{prop2} entails that the left hand side is asymtptically $(\log k)^{-\gamma_0}e^{O_P(1)}$ uniformly on $H_n$ so that it remains positive on $H_n$ with high probability. Equations \eqref{eq:defggamma}-\eqref{eq:defGEV} imply that the function $\theta\mapsto \ell(\theta,x)$ is smooth when it is finite. We deduce that $\widetilde L_{k,m}$ is smooth on $H_n$ as soon as it is finite on $H_n$.
\end{proof}

\begin{proof}[Proof of Proposition~\ref{prop:concave}]
According to Lemma~\ref{lem:finite}, the local log-likelihood process $\widetilde L_{k,m}$ is smooth on $H_n$ with high probability. Differentiating Equation \eqref{eq:def_lllp2}, we get
\[
\frac{\partial^2\widetilde L_{k,m}}{\partial h\partial h^T}(h)=\int_0^1 \frac{\partial^2\ell}{\partial \theta\partial\theta^T}\left(\theta_0+\frac{h}{\sqrt{k}},Q_{k,m}(s)\right)ds.
\]
By the definition \eqref{eq:info} of the information matrix,
\begin{eqnarray*}
\frac{\partial^2\widetilde L_{k,m}}{\partial h\partial h^T}(h)+I_{\theta_0}
&=& \int_0^1 \left(\frac{\partial^2\ell}{\partial \theta\partial\theta^T}\left(\theta_0+\frac{h}{\sqrt{k}},Q_{k,m}(s)\right)- \frac{\partial^2\ell}{\partial \theta\partial\theta^T}\left(\theta_0,Q_{\gamma_0}(s)\right)\right)ds\\
&=& \int_{0}^{\frac{1}{k}}\left(\cdots\right)ds + \int_{\frac{1}{k}}^{\frac{k-1}{k}}\left(\cdots\right)ds+ \int_{\frac{k-1}{k}}^{1}\left(\cdots\right)ds\\
&=& \mathrm{I}+\mathrm{II}+\mathrm{III}
\end{eqnarray*}
We will show that these three terms are $o_P(1)$ uniformly on $H_n$, which proves Equation \eqref{eq:theo1.3}.

\medskip
First consider the boundary terms $\mathrm{I}$ and $\mathrm{III}$. Since $Q_{k,m}(s)$ is constant on $[0,\frac{1}{k}]$, we have
\begin{equation}\label{eq:termI}
\mathrm{I}=\frac{1}{k}\frac{\partial^2\ell}{\partial \theta\partial\theta^T}\left(\theta_0+\frac{h}{\sqrt{k}},Q_{k,m}\left(\frac{1}{k}\right)\right) - \int_0^{\frac{1}{k}} \frac{\partial^2\ell}{\partial \theta\partial\theta^T}\left(\theta_0,Q_{\gamma_0}(s)\right)ds
\end{equation}
The integral term vanishes as $k\to\infty$ because the integral is well defined on $[0,1]$ (see Eq. \eqref{eq:info} or alternatively use the upper bound for the second derivative provided by Proposition~\ref{prop3}). To deal with the first term, we use the upper bound for the second derivative provided by Proposition \ref{prop3}:
\[
 \left\|\frac{\partial^2\ell}{\partial \theta\partial\theta^T}\left(\theta_0+\frac{h}{\sqrt{k}},Q_{k,m}\left(\frac{1}{k}\right)\right) \right\|
\leq C\max\left(z^{-\varepsilon},z^{1+\varepsilon},z^{2\gamma_0-\varepsilon},z^{1+2\gamma_0+\varepsilon}
\right)
\]
with 
\begin{eqnarray*}
z&=&z\left(\theta_0+\frac{h}{\sqrt{k}},Q_{k,m}\left(\frac 1 k\right)\right)\\
&=&\left(1+(\gamma_0+h_1/\sqrt{k})\frac{Q_{k,m}(1/k)-h_2/\sqrt{k}}{1+h_3/\sqrt{k}}\right)^{-1/(\gamma_0+h_1/\sqrt{k})}.
\end{eqnarray*}
Corollary \ref{prop2} with $s=\frac{1}{k}$ provides the asymptotic
$z= e^{O_P(1)}\log k$
uniformly for $h\in H_n$. We deduce that the  first term in \eqref{eq:termI} is asymptotically
\[
e^{O_P(1)}\frac{1}{k}\max\left((\log k)^{-\varepsilon},(\log k)^{1+\varepsilon},(\log k)^{2\gamma_0-\varepsilon},(\log k)^{1+2\gamma_0+\varepsilon}\right)=o_P(1)
\]
uniformly for $h\in H_n$. Hence, $\mathrm{I}=o_P(1)$ uniformly for $h\in H_n$.
The proof for the boundary term $\mathrm{III}$ is similar and details are omitted.

\medskip
Next, consider the main term $\mathrm{II}$. By Taylor formula, we have
\begin{eqnarray*}
\mathrm{II}&=&\int_{\frac{1}{k}}^{\frac{k-1}{k}}
\left( \frac{\partial^2\ell}{\partial \theta\partial\theta^T}\left(\theta_0+\frac{h}{\sqrt{k}},Q_{k,m}(s)\right)- \frac{\partial^2\ell}{\partial \theta\partial\theta^T}\left(\theta_0,Q_{\gamma_0}(s)\right)\right)ds  \\
&=& \mathrm{II}_a+\mathrm{II}_b
\end{eqnarray*}
with
\begin{equation}\label{eq:termIIa}
\mathrm{II}_a=\frac{1}{\sqrt k}\int_{\frac{1}{k}}^{\frac{k-1}{k}}\int_{0}^{1} h^T\frac{\partial^3\ell}{\partial^2 \theta\partial\theta^T}\left(\theta_0+\frac{uh}{\sqrt{k}},(1-u)Q_{\gamma_0}(s)+uQ_{k,m}(s)\right) duds
\end{equation}

\begin{equation}\label{eq:termIIb}
\mathrm{II}_b=\int_{\frac{1}{k}}^{\frac{k-1}{k}}\int_{0}^{1}(Q_{k,m}(s)-Q_{\gamma_0}(s))\frac{\partial^3\ell}{\partial x\partial \theta\partial\theta^T}\left(\theta_0+\frac{uh}{\sqrt{k}},(1-u)Q_{\gamma_0}(s)+uQ_{k,m}(s)\right) duds
\end{equation}
Using the notation $z=z(\theta_n',x_n')$ with
\[
x_n'= (1-u)Q_{\gamma_0}(s)+uQ_{k,m}(s)\quad \mbox{and}\quad
\theta_n'=\theta_0+\frac{uh}{\sqrt{k}},\quad u\in[0,1],
\]
Proposition \ref{prop3} provides the upper bound 
\[
 \left\|\frac{\partial^3\ell}{\partial^2 \theta\partial\theta^T}\left(\theta_n',x_n'\right) \right\|\leq  C\max\left(z^{-\varepsilon}, z^{1+\varepsilon},z^{3\gamma_0-\varepsilon}, z^{1+3\gamma_0+\varepsilon}\right).
\]
Using the fact that  $uh\in H_n$ and that $z(\theta_n',x_n')$ is
between $z(\theta_n',Q_{\gamma_0}(s))$ and $z(\theta_n',Q_{k,m}(s))$,
Proposition~\ref{prop2} and Lemma~\ref{lem:boundQ0} imply
 \[
z=z(\theta_n',x_n')= e^{O_P(1)}(-\log s) 
\]
uniformly for $s\in [\frac{1}{k},\frac{k-1}{k}]$, $u\in [0,1]$ and $h\in H_n$. Using these bounds, we obtain
\[
 \left\|\frac{\partial^3\ell}{\partial^2 \theta\partial\theta^T}\left(\theta_n',x_n'\right) \right\|=e^{O_P(1)}\max\left(s^{-\varepsilon},(1-s)^{-\varepsilon}, (1-s)^{3\gamma_0-\varepsilon}\right)
\]
and, since condition \eqref{eq:condition_rn} implies $\|h\|=O(k^\delta)$,
\[
\mathrm{II}_a=O_P(k^{\delta-1/2})\int_{\frac{1}{k}}^{\frac{k-1}{k}} \max\left(s^{-\varepsilon},(1-s)^{-\varepsilon}, (1-s)^{3\gamma_0-\varepsilon}\right)ds .
\]
When $3\gamma_0-\varepsilon>-1$ the integral converges as $k\to\infty$ and,  $\mathrm{II}_a=O_P(k^{\delta-1/2})=o_P(1)$ since $\delta<1/2$.
When $\gamma_0\leq -1/3$, the integral diverges at rate $O(k^{-1-3\gamma_0+\varepsilon})$ so that $\mathrm{II}_a=O_P(k^{\delta-3/2-3\gamma_0+\varepsilon})=o_P(1)$ since $\delta+\varepsilon<\gamma_0+1/2$.\\
Similarly for the term $\mathrm{II}_b$,  Propositions~\ref{prop2}
and~\ref{prop3} together with Lemma~\ref{lem:boundQ0} imply
\begin{eqnarray*}
 && \left\|\frac{\partial^3\ell}{\partial x\partial \theta\partial\theta^T}\left(\theta_n',x_n'\right) \right\|
\leq  C\max\left(z^{\gamma_0-\varepsilon}, z^{1+\gamma_0+\varepsilon},z^{3\gamma_0-\varepsilon}, z^{1+3\gamma_0+\varepsilon}\right) \\
&\leq& e^{O_P(1)}\max\left((-\log s)^{\gamma_0-\varepsilon},(-\log s)^{1+\gamma_0+\varepsilon},(-\log s)^{3\gamma_0-\varepsilon},(-\log s)^{1+3\gamma_0+\varepsilon}\right),
\end{eqnarray*}
uniformly for $s\in[\frac{1}{k},\frac{k-1}{k}]$, $u\in[0,1]$ and $h\in H_n$. 

From the law of the iterated logarithm,
\begin{equation}\label{eq:lil}
B_k(s)=O_p\left(s^{1/2-\varepsilon}(1-s)^{1/2-\varepsilon}\right) \quad \mbox{uniformly on $(0,1)$}
\end{equation}
and,
\begin{equation}\label{eq:boundH}
H_{\gamma_0,\rho}\left(\frac{1}{-\log s}\right)=O\left(s^{-\varepsilon}(1-s)^{-\varepsilon+\min(-\gamma_0,0)}\right)  \quad \mbox{uniformly on $(0,1)$}.
\end{equation}
Combining these two with Proposition \ref{prop1} it follows,
\begin{equation}\label{eq:boundQ}
\sqrt k \left( Q_{k,m}(s)-Q_{\gamma_0}(s) \right)=O_P\left(s^{-1/2-\varepsilon}(1-s)^{-1/2-\gamma_0-\varepsilon}+s^{-\varepsilon}(1-s)^{\min(-\gamma_0,0)-\varepsilon}\right)
\end{equation}
uniformly on $(0,1)$. 

Combining the previous bound for the derivative and \eqref{eq:boundQ}, we deduce similarly as before
\[
\mathrm{II}_b=O_P\left(\frac{1}{\sqrt{k}}\right)\int_{\frac{1}{k}}^{\frac{k-1}{k}}\max(s^{-1/2-2\varepsilon}, (1-s)^{-1/2+2\gamma_0-2\varepsilon},(1-s)^{3\gamma_0-2\varepsilon})ds=o_P(1)
\]
uniformly in $h\in H_n$ for $\varepsilon$ small enough.
\end{proof}

\begin{proof}[Proof of Theorem~\ref{theo1}]
Integrating Equation \eqref{eq:theo1.3}, we obtain directly
\begin{align}
&\frac{\partial \widetilde L_{k,m}}{\partial h} (h)
= \frac{\partial \widetilde L_{k,m}}{\partial h} (0)-I_{\theta_0}h+o_P(1)\nonumber\\
&\widetilde L_{k,m}(h)
=\widetilde L_{k,m}(0)+h^T  \frac{\partial \widetilde L_{k,m}}{\partial h} (0)-\frac{1}{2}h^TI_{\theta_0}h+o_P(1)\nonumber
\end{align}
uniformly on compact sets. This is exactly Equations \eqref{eq:theo1.1} and \eqref{eq:theo1.2} since
\[
\frac{\partial \widetilde L_{k,m}}{\partial h} (0)= \frac{1}{\sqrt{k}}\sum_{i=1}^k \frac{\partial \ell}{\partial \theta}\left(\theta_0,\frac{M_{i,m}-b_m}{a_m} \right)=\widetilde G_{k,m}.
\]
It remains to prove the asymptotic normality \eqref{eq:normal}, i.e.
\begin{equation}\label{eq:normal2}
\frac{\partial \widetilde L_{k,m}}{\partial h} (0)\to^d \mathcal{N}(\lambda b,I_{\theta_0}).
\end{equation}
Differentiating Equation \eqref{eq:def_lllp2}, we obtain
\[
\frac{\partial \widetilde L_{k,m}}{\partial h} (0)= \sqrt{k}\int_0^1 \frac{\partial \ell}{\partial \theta}\left(\theta_0,Q_{k,m}(s)\right)ds = \mathrm{I}'+\mathrm{II}'+\mathrm{III}'
\]
where the three terms correspond to the integrals on $[0,\frac{1}{k}]$, $[\frac{1}{k},\frac{k-1}{k}]$ and $[\frac{k-1}{k},1]$ respectively. Since $Q_{k,m}(s)$ is constant on the first and last intervals, we have
\begin{align*}
\mathrm{I}'&=\frac{1}{\sqrt{k}}\frac{\partial \ell}{\partial \theta}\left(\theta_0,Q_{k,m}\left(\frac{1}{k+1}\right)\right)\\
\mathrm{III}'&=\frac{1}{\sqrt{k}}\frac{\partial \ell}{\partial \theta}\left(\theta_0,Q_{k,m}\left(\frac{k}{k+1}\right)\right).
\end{align*}
The first term is evaluated thanks to Propositions \ref{prop3} and \ref{prop2}:
\[
\left\|\frac{\partial \ell}{\partial \theta}\left(\theta_0,Q_{k,m}\left(\frac{1}{k+1}\right)\right)\right\| \leq C\max\left( z^{-\varepsilon},z^{1+\varepsilon},z^{\gamma_0-\varepsilon},z^{1+\gamma_0+\varepsilon} \right)
\]
with
\[
z=\left(1+\gamma_0 Q_{k,m}\left(\frac{1}{k+1}\right)\right)^{-1/\gamma_0}=e^{O_P(1)}\log k,
\]
whence we deduce
\[
\mathrm{I}'=e^{O_P(1)}\frac{1}{\sqrt{k}}\max((\log k)^{1+\varepsilon},(\log k)^{1+\gamma_0+\varepsilon}) =o_P(1).
\]
With similar arguments, one can prove $\mathrm{III}'=o_P(1)$.

For the second term, we use Taylor integral formula
\begin{align*}
&\frac{\partial \ell}{\partial \theta}\left(\theta_0,Q_{k,m}(s)\right)
=\frac{\partial \ell}{\partial \theta}\left(\theta_0,Q_{\gamma_0}(s)\right)+\frac{\partial^2 \ell}{\partial x\partial \theta}\left(\theta_0,Q_{\gamma_0}(s)\right)(Q_{k,m}(s)-Q_{\gamma_0}(s))\\
&\quad+(Q_{k,m}(s)-Q_{\gamma_0}(s))^2\int_{0}^1 (1-u)\frac{\partial^3 \ell}{\partial x^2\partial \theta}\left(\theta_0,(1-u)Q_{\gamma_0}(s)+uQ_{k,m}(s)\right)du.
\end{align*}
From decomposition \eqref{eq:prop1} for $\sqrt{k}(Q_{k,m}(s)-Q_{\gamma_0}(s))$, we get
\[
\mathrm{II}'=\sqrt{k}\int_{\frac{1}{k}}^{\frac{k-1}{k}} \frac{\partial \ell}{\partial \theta}\left(\theta_0,Q_{k,m}(s)\right)ds=\mathrm{II}'_a+\mathrm{II}'_b+\mathrm{II}'_c+\mathrm{II}'_d+\mathrm{II}'_e,
\]
with
\begin{align*}
\mathrm{II}'_a&=\sqrt{k}\int_{\frac{1}{k}}^{\frac{k-1}{k}}\frac{\partial \ell}{\partial \theta}\left(\theta_0,Q_{\gamma_0}(s)\right)ds\\
\mathrm{II}'_b&=\int_{\frac{1}{k}}^{\frac{k-1}{k}}\frac{\partial^2 \ell}{\partial x\partial \theta}\left(\theta_0,Q_{\gamma_0}(s)\right)\frac{B_k(s)}{s(-\log s)^{\gamma_0+1}}ds\\
\mathrm{II}'_c&=\lambda\int_{\frac{1}{k}}^{\frac{k-1}{k}} \frac{\partial^2 \ell}{\partial x\partial \theta}\left(\theta_0,Q_{\gamma_0}(s)\right)H_{\gamma_0,\rho}\left(\frac{1}{-\log s}\right)ds\\
\mathrm{II}'_d&=\int_{\frac{1}{k}}^{\frac{k-1}{k}}\frac{\partial^2 \ell}{\partial x\partial \theta}\left(\theta_0,Q_{\gamma_0}(s)\right)R_{k,m}(s)ds\\
\mathrm{II}'_e&=\int_{\frac{1}{k}}^{\frac{k-1}{k}}\int_0^1\sqrt{k}(Q_{k,m}(s)-Q_{\gamma_0}(s))^2(1-u)\frac{\partial^3 \ell}{\partial x^2\partial \theta}\left(\theta_0,(1-u)Q_{\gamma_0}(s)+uQ_{k,m}(s)\right)duds.
\end{align*}
We consider the different terms successively. Equation \eqref{eq:scorenul} implies
\[
\int_{0}^{1}\frac{\partial \ell}{\partial \theta}\left(\theta_0,Q_{\gamma_0}(s)\right)ds=0
\]
so that
\[
\mathrm{II}'_a=-\sqrt{k}\int_{0}^{\frac{1}{k}}\frac{\partial \ell}{\partial \theta}\left(\theta_0,Q_{\gamma_0}(s)\right)ds-\sqrt{k}\int_{1-\frac{1}{k}}^{1}\frac{\partial \ell}{\partial \theta}\left(\theta_0,Q_{\gamma_0}(s)\right)ds.
\]
Proposition~\ref{prop3} provides the upper bound
\begin{eqnarray*}
\left\|\frac{\partial \ell}{\partial \theta}\left(\theta_0,Q_{\gamma_0}(s)\right)\right\|
&\leq& C \max\left((-\log s)^{-\varepsilon},(-\log s)^{1+\varepsilon},(-\log s)^{\gamma_0-\varepsilon},(-\log s)^{1+\gamma_0+\varepsilon}\right)\\
&\leq& C\max(s^{-\varepsilon},(1-s)^{-\varepsilon},(1-s)^{\gamma_0-\varepsilon}),
\end{eqnarray*}
whence we deduce 
$\mathrm{II}'_a=O(\max(k^{-1/2+\varepsilon},k^{-1/2-\gamma_0+\varepsilon}))=o(1)$ because $\gamma_0>-1/2$.
For term $\mathrm{II}'_b$, Proposition~\ref{prop3} entails
\begin{eqnarray}
&&\left\|\frac{\partial^2 \ell}{\partial x\partial \theta}\left(\theta_0,Q_{\gamma_0}(s)\right)\right\|\\
&\leq &C \max((-\log s)^{\gamma_0-\varepsilon},(-\log s)^{1+\gamma_0+\varepsilon},(-\log s)^{2\gamma_0-\varepsilon},(-\log s)^{1+2\gamma_0+\varepsilon})\nonumber \\
&\leq& C\max(s^{-\varepsilon},(1-s)^{\gamma_0-\varepsilon},(1-s)^{2\gamma_0-\varepsilon}).\label{eq:prooftheo1}
\end{eqnarray}
Combined with \eqref{eq:lil} we get
\begin{eqnarray*}
&&\left\|\frac{B_k(s)}{s(-\log s)^{\gamma_0+1}}\frac{\partial^2 \ell}{\partial x\partial \theta}\left(\theta_0,Q_{\gamma_0}(s)\right)\right\|\\
&=& \max(s^{-1/2-2\varepsilon},(1-s)^{-1/2-2\varepsilon},(1-s)^{\gamma_0-1/2-2\varepsilon})O_P(1),
\end{eqnarray*}
which implies, since $\gamma_0>-1/2$,
\[
\mathrm{II}'_b=\int_0^1\frac{\partial^2 \ell}{\partial x\partial \theta}\left(\theta_0,Q_{\gamma_0}(s)\right)\frac{B_k(s)}{s(-\log s)^{\gamma_0+1}}ds+o_P(1)
\]
where the integral on $[0,1]$ is well defined.

Similarly for $\mathrm{II}'_c$, \eqref{eq:boundH} together with \eqref{eq:prooftheo1} yields
\begin{eqnarray*}
&&\left\|H_{\gamma_0,\rho}\left(\frac{1}{-\log s}\right)\frac{\partial^2 \ell}{\partial x\partial \theta}\left(\theta_0,Q_{\gamma_0}(s)\right)\right\|\\
&\leq& C \max\left(s^{-2\varepsilon},(1-s)^{-2\varepsilon},(1-s)^{2\gamma_0-2\varepsilon}\right).
\end{eqnarray*}
Because $2\gamma_0>-1$, we get
\[
\mathrm{II}'_c=\lambda\int_0^1\frac{\partial^2 \ell}{\partial x\partial \theta}\left(\theta_0,Q_{\gamma_0}(s)\right)H_\rho\left(\frac{1}{-\log s}\right)ds+o(1)
\]
where the integral on $[0,1]$ is well defined. 

For $\mathrm{II}'_d$ we use the uniform bound \eqref{eq:remainder} and the upper bound \eqref{eq:prooftheo1} to get
\begin{eqnarray*}
\mathrm{II}'_d&=&\int_{\frac{1}{k}}^{\frac{k-1}{k}}\frac{\partial^2 \ell}{\partial x\partial \theta}\left(\theta_0,Q_{\gamma_0}(s)\right)R_{k,m}(s)ds\\
&=& o_P(1)\int_{\frac{1}{k}}^{\frac{k-1}{k}} C\max\left( s^{-1/2-2\varepsilon},(1-s)^{-1/2-\rho-2\varepsilon},(1-s)^{\gamma_0-1/2-\rho-2\varepsilon} \right)ds\\
&=&o_P(1).
\end{eqnarray*}
We consider finally  the  last term $\mathrm{II}'_e$. 
With the notations $x_n'= (1-u)Q_{\gamma_0}(s)+uQ_{k,m}(s)$ and $z=z(\theta_0,x'_n)$, Proposition \ref{prop3} yields
\[
 \left\|\frac{\partial^3\ell}{\partial^2 x\partial\theta}\left(\theta_0,x_n'\right) \right\|\leq  C\max\left(z^{2\gamma_0-\varepsilon},z^{1+2\gamma_0+\varepsilon}, z^{3\gamma_0-\varepsilon}, z^{1+3\gamma_0+\varepsilon}\right).
\]
Using the fact that  $z(\theta_0,x_n')$ is between
$z(\theta_0,Q_{\gamma_0}(s))$ and $z(\theta_0,Q_{k,m}(s))$,
Corollary \ref{prop3} implies $z(\theta_0,x_n')= e^{O_P(1)}(-\log s)$ so that
\[
 \left\|\frac{\partial^3\ell}{\partial^2 x\partial\theta}\left(\theta_0,x_n'\right) \right\|=O_P(1)\max\left(s^{-\varepsilon},(1-s)^{2\gamma_0-\varepsilon},(1-s)^{3\gamma_0-\varepsilon}\right).
\]
Combining this bound with \eqref{eq:boundQ}, we obtain similarly as before
\begin{eqnarray*}
\mathrm{II}'_e
&=& O\left(\frac{1}{\sqrt k}\right)\int_{\frac{1}{k}}^{\frac{k-1}{k}}\max\left(s^{-1-3\varepsilon},(1-s)^{-1+\gamma_0-3\varepsilon}\right)ds=o_P(1).
\end{eqnarray*}
Collecting all the different terms, we get
\begin{eqnarray*}
\frac{\partial \widetilde L_{k,m}}{\partial h} (0)&=&
\int_0^1\frac{\partial^2 \ell}{\partial x\partial \theta}\left(\theta_0,Q_{\gamma_0}(s)\right)\frac{B_k(s)}{s(-\log s)^{\gamma_0+1}}ds \\
&&\quad +\lambda\int_0^1\frac{\partial^2 \ell}{\partial x\partial \theta}\left(\theta_0,Q_{\gamma_0}(s)\right)H_\rho\left(\frac{1}{-\log s}\right)ds +o_P(1).
\end{eqnarray*}
The second term in the right-hand side is deterministic and corresponds to $\lambda b$ with $b$ defined by \eqref{eq:bias}. The first term is an integral of the Brownian bridge that defines a centered Gaussian vector whose covariance only depends  on the first order parameter $\gamma_0$. Comparing with the special case of i.i.d. GEV random variables considered in Corollary \ref{cor:LAN}, we identify the covariance which is equal to $I_{\gamma_0}$. This proves Equation~\eqref{eq:normal2} and concludes the proof of Theorem~\ref{theo1}.
\end{proof}

\begin{proof}[Proof of Theorem~\ref{theo2}]
The proof of Theorem~\ref{theo2} relies on Theorem~\ref{theo1} and on the Argmax Theorem. Consider the random processes
 \[
 M_n(h)=\widetilde L_{k,m}(h)-\widetilde L_{k,m}(0),\quad h\in\bbR^3
 \]
and
\[
M(h)=h^T(\lambda b+G)-\frac{1}{2}h^TI_{\theta_0}h ,\quad h\in\bbR^3
\]
with $G$ a centered Gaussian random vector with variance $I_{\theta_0}$.
Let $H_n$ be the closed ball of $\bbR^3$ centered at $0$ and with radius $r_n\to\infty$ such that $r_n=O(k^\delta)$ as in \eqref{eq:condition_rn}. Define the maximizer
 \[
 \widehat h_n=\argmax _{h\in H_n} M_n(h).
 \]
In the case it is not unique, define $\widehat h_n$ as the smallest  maximizer in the lexicographic order. Theorem \ref{theo1} implies that, for any compact $K\subset\bbR^3$, $M_n$ converge in distribution  to $M$ in $\ell^\infty(K)$ as $k\to\infty$. The limit process $M$ is continuous and has a unique maximizer given by
\[
\widehat h=\argmax_{h\in\bbR^3} M(h) = I_{\theta_0}^{-1}(\lambda b+G).
\]
The Argmax Theorem (see van der Vaart \cite{vdV98} Corollary 5.58) implies that, provided the sequence $\widehat h_n$ is tight, then $\widehat h_n$ converge weakly to $\widehat h$ as $n\to\infty$.

We now prove the tightness of the sequence $\widehat h_n$. Let $\varepsilon>0$. There is $R>0$ such that
\[
\bbP(\|\widehat h\|< R)>1-\varepsilon.
\]
The relation
\[
M(h)=M(\widehat h)-\frac{1}{2}(h-\widehat h)^TI_{\theta_0}(h-\widehat h),
\]
implies that
\[
\max_{\|\widehat h-h\|\geq 1} M(h)=M(\widehat h)-\frac{1}{2}\lambda_{\min}
\]
with $\lambda_{\min}>0$ the smallest eigenvalue of $I_{\theta_0}$. As a consequence,  $\|\widehat h\|< R$ implies
\[
M(\widehat h)-\max_{\|h\|= R+1}M(h)=\max_{\|h\|\leq R}M(h)-\max_{\|h\|= R+1}M(h)\geq \frac{1}{2}\lambda_{\min}
\]
and this occurs with probability at least $1-\varepsilon$. Using the  convergence in distribution of $M_n$ to $M$ in $\ell^\infty(K)$ with $K=\{h:\|h\|\leq R+1\}$, we deduce that for large $n$
\begin{equation}\label{eq:prooftheo2}
\max_{\|h\|\leq R}M_n(h)-\max_{\|h\|= R+1}M_n(h)\geq \frac{1}{4}\lambda_{\min}
\end{equation}
with probability at least $1-2\varepsilon$. For large $n$,  $H_n$ contains the ball $\{h:\|h\|\leq R+1\}$ (because $r_n\to\infty$) and, according to Proposition~\ref{prop:concave}, $M_n$ is strictly concave on $H_n$ with probability at least $1-\varepsilon$. Then, Equation \eqref{eq:prooftheo2} together with the strict concavity of $M_n$ implies that the maximizer $\widehat h_n$ of $M_n$ over $H_n$ satisfies $\|\widehat h_n\|\leq R$. Hence, for large $n$,
$\bbP(\|\widehat h_n\|\leq R)\geq 1-3\varepsilon$ and this proves the tightness of $\widehat h_n$. Note also that on this event, $\widehat h_n$ belongs to the interior of $H_n$ and is hence a critical point of $M_n$, i.e.
\[
\frac{\partial M_k}{\partial h}(\widehat h_n)=0 \quad \mbox{or equivalently}\quad \frac{\partial \widetilde {L}_{k,m}}{\partial h}(\widehat h_n)=0.
\]
Define
\[
\widehat\theta_n=(\gamma_0+k^{-1/2}\widehat h_{n,1}, b_m+a_mk^{-1/2}\widehat h_{n,2},a_m(1+k^{-1/2}\widehat h_{n,3})).
\]
Equations \eqref{eq:def_lllp} and \eqref{eq:def_lsp} imply that
\[
\widetilde L_{k,m}(\widehat h_n)=L_{k,m}(\widehat \theta_n) \quad \mbox{and}\quad \frac{\partial \widetilde L_{k,m}}{\partial h}(\widehat h_n)=k^{-1/2}\frac{\partial L_{k,m}}{\partial \theta}(\widehat \theta_n).
\]
Hence, with high probability, $L_{k,m}$ has a local maximum at $\widehat\theta_n$ with $\frac{\partial L_{k,m}}{\partial \theta}(\widehat \theta_n)=0$, i.e.  $\widehat\theta_n$ is a MLE  and Eq. \eqref{eq:theo2.1} is satisfied. Eq. \eqref{eq:theo2.2} stating the asymptotic normality of $\widehat\theta_k$ is a direct consequence of
\[
\widehat h_n\stackrel{d}{\rightarrow} \widehat h=I_{\theta_0}^{-1}(\lambda b+G)\sim \mathcal{N}(\lambda I_{\theta_0}^{-1}b,I_{\theta_0}^{-1}).
\]
\medskip
The second part of Theorem~\ref{theo2}, i.e. the asymptotic uniqueness of the MLE, is a consequence of the strict concavity stated in Proposition \ref{theo1}: with large probability the log-likelihood function is strictly concave on $H_n$  and hence the score equation $\frac{\partial}{\partial h}\widetilde{L}_{k,m}(h)=0$ has a unique solution on $H_n$. For $n$ large, the  normalised MLE 
\[
\widehat h^i_n=(\widehat \gamma_n^i,(\widehat\mu_n^i-b_m)/a_m,\widehat\sigma_n^i/a_m-1), \quad i=1,2
\]
belong to $H_n$ with large probability and solve  $\frac{\partial}{\partial h}\widetilde{L}_{k,m}(h)=0$. This implies that $\widehat h^1_n=\widehat h^2_n$ with high probability and hence $\widehat \theta^1_n=\widehat \theta^2_n$ with high probability.
\end{proof}

\appendix

\section{Formulas for the information matrix and bias}\label{app:A}
According to Prescott and Walden  \cite{PW80} (see also Beirlant {\it et al.} \cite[page 169]{BGST04}), the information matrix of the GEV model at point $\theta_0=(\gamma_0,0,1)$ is given by
{\scriptsize
\[
I_{\theta_0}= \left(\begin{array}{ccc}
        \frac{1}{\gamma_0^2} \left(\frac{\pi^2}{6}+\left(1-\gamma_\ast+\frac{1}{\gamma_0}\right)^2-\frac{2q}{\gamma_0}+\frac{p}{\gamma_0^2}\right)& -\frac{1}{\gamma_0}\left(q-\frac{p}{\gamma_0}\right)& -\frac{1}{\gamma_0^2}\left(1-\gamma_\ast-q+\frac{1-r+p}{\gamma_0}\right)\\
       -\frac{1}{\gamma_0}\left(q-\frac{p}{\gamma_0}\right) & p &-\frac{p-r}{\gamma_0} \\
        -\frac{1}{\gamma_0^2}\left(1-\gamma_\ast-q+\frac{1-r+p}{\gamma_0}\right)& -\frac{p-r}{\gamma_0} &\frac{1}{\gamma_0^2}(1-2r+p)
       \end{array}
 \right)
\]
}
where $\Gamma$ is Euler's Gamma function, $\gamma_\ast=0.5772157$ is Euler's constant and
{\scriptsize
\[
 p= (1+\gamma_0)^2\Gamma(1+2\gamma_0),\quad
 q= (1+\gamma_0)\Gamma'(1+\gamma_0)+\left(1+\frac{1}{\gamma_0}\right)\Gamma(2+\gamma_0),\quad
 r= \Gamma(2+\gamma_0).
\]
}

The bias in Theorem \ref{theo2} is given by $I_\theta^{-1}b$ where the vector $b$ can be computed exactly. Calculations are tedious and have been performed with Mathematica$^\circledR$.
We get $b=(b_\gamma,b_\mu,b_\sigma)$ with
{\scriptsize
\begin{eqnarray*}
b_\gamma&=&\int_0^1 \frac{\partial^2 \ell}{\partial x\partial\gamma}\left(\theta_0,Q_{\gamma_0}(s)\right)H_{\gamma_0,\rho}\left(\frac{1}{-\log s} \right)ds\\
&=&\left\{\begin{array}{ll}
\frac{1}{\gamma_0^3\rho(\gamma_0+\rho)}\Big((\gamma_0+\rho)(1+\gamma_0-\gamma_\ast\gamma_0)-(\gamma_0+\gamma_0^2(1+\rho)+2\rho(1+\gamma_0))\Gamma(1+\gamma_0)+(1+\gamma_0)^2\rho\Gamma(1+2\gamma_0)& \\
+\gamma_0^2\Gamma(1-\rho)-\gamma_0(1+\gamma_0)\Gamma(2-\rho)+\gamma_0(1+\gamma_0)(1-\rho)\Gamma(1+\gamma_0-\rho)-\gamma_0\rho\Gamma'(2+\gamma_0)-\gamma_0^2\Gamma'(2-\rho) \Big), &\rho<0,   \\
\frac{1}{\gamma_0^4}\Big((1+\gamma_0-\gamma_0\gamma_\ast)^2+\gamma_0^2\pi^2/6+(1+\gamma_0)^2\Gamma(1+2\gamma_0)-2(1+\gamma_0)\left[(1+\gamma_0)\Gamma(1+\gamma_0)+\gamma_0\Gamma'(1+\gamma_0)\right] \Big), &\rho=0,
\end{array}\right.
\end{eqnarray*}

\begin{eqnarray*}
	b_\mu&=&\int_0^1 \frac{\partial^2 \ell}{\partial x\partial\mu}\left(\theta_0,Q_{\gamma_0}(s)\right)H_{\gamma_0,\rho}\left(\frac{1}{-\log s} \right)ds\\
	&=&\left\{\begin{array}{ll}
		\frac{1+\gamma_0}{\gamma_0\rho(\gamma_0+\rho)}\left(-(\gamma_0+\rho)\Gamma(1+\gamma_0)+(1+\gamma_0)\rho\Gamma(1+2\gamma_0)+\gamma_0(1-\rho)\Gamma(1+\gamma_0-\rho)\right),&\rho<0,   \\
		\frac{(1+\gamma)}{\gamma_0^2}\Big((1+\gamma_0)\Gamma(1+2\gamma_0)-\Gamma(2+\gamma_0)-\gamma_0\Gamma'(1+\gamma_0) \Big), &\rho=0,
	\end{array}\right.
\end{eqnarray*}

\begin{eqnarray*}
	b_\sigma&=&\int_0^1 \frac{\partial^2 \ell}{\partial x\partial\sigma}\left(\theta_0,Q_{\gamma_0}(s)\right)H_{\gamma_0,\rho}\left(\frac{1}{-\log s} \right)ds\\
	&=& \left\{\begin{array}{ll}
		\frac{1}{\gamma_0^2\rho(\gamma_0+\rho)}\Big(-\gamma_0-\rho +(1+\gamma_0)(\gamma_0+2\rho)\Gamma(1+\gamma_0)-(1+\gamma_0)^2\rho\Gamma(1+2\gamma_0)&\\
	\quad+\gamma_0\Gamma(2-\rho)-\gamma_0(1+\gamma_0)(1-\rho)\Gamma(1+\gamma_0-\rho)\Big),&\rho<0,   \\
	\frac{1}{\gamma_0^3}\Big(-1+\gamma_0(\gamma_\ast-1)-(1+\gamma_0)^2\Gamma(1+2\gamma_0)+\Gamma(3+\gamma_0)+\gamma_0(1+\gamma_0)\Gamma'(1+\gamma_0) \Big), &\rho=0.
\end{array}\right.
\end{eqnarray*}
}

\section{Bounds for the derivatives of the likelihood}\label{app:C}

We  provide in this section upper bounds for the partial derivatives of the GEV log-likelihood,
\[
\ell(\theta,x)=-\left(1+\frac{1}{\gamma}\right)\log\left(1+\gamma\frac{x-\mu}{\sigma}\right)-\left(1+\gamma\frac{x-\mu}{\sigma}\right)^{-1/\gamma}-\log\sigma,
\] 
for $\theta=(\gamma,\mu,\sigma)$ and $x$ such that $1+\gamma\frac{x-\mu}{\sigma}>0$.
\begin{proposition}\label{prop3}
Let $\theta_0=(\gamma_0,0,1)$ with $\gamma_0\in\mathbb{R}$. For all $\varepsilon>0$, there exists a neighbourhood $N_0$ of $\theta_0$ and a constant $C>0$ such that, for all $\theta\in N_0$ and $x$ such that $1+\gamma(x-\mu)/\sigma>0$, we have
\begin{eqnarray*}
\left|\frac{\partial \ell}{\partial x^{i}\partial \theta^{1-i}} \right|&\leq& C\max\left(z^{i\gamma_0-\varepsilon},z^{1+i\gamma_0+\varepsilon},z^{\gamma_0-\varepsilon},z^{1+\gamma_0+\varepsilon} \right),\quad i=0,1,\\
\left|\frac{\partial^2 \ell}{\partial x^{i}\partial \theta^{2-i}} \right|&\leq& C\max\left(z^{i\gamma_0-\varepsilon},z^{1+i\gamma_0+\varepsilon},z^{2\gamma_0-\varepsilon},z^{1+2\gamma_0+\varepsilon} \right),\quad i=0,1,2,\\
\left|\frac{\partial^3 \ell}{\partial x^{i}\partial \theta^{3-i}} \right|&\leq& C\max\left(z^{i\gamma_0-\varepsilon},z^{1+i\gamma_0+\varepsilon},z^{3\gamma_0-\varepsilon},z^{1+3\gamma_0+\varepsilon} \right),\quad i=0,1,2,3,
\end{eqnarray*}
where 
\[
z=z(\theta,x)=\left(1+\gamma\frac{x-\mu}{\sigma}\right)^{-1/\gamma}>0,\quad \text{for } 1+\gamma\frac{x-\mu}{\sigma}>0.
\]
The notation $\partial\theta$ denotes either $\partial\gamma$, $\partial\sigma$ or $\partial\mu$ and, similarly for higher order derivatives, $\partial\theta^2$ denotes $\partial\gamma^2$, $\partial\gamma\partial\mu$, $\partial\gamma\partial\sigma$, $\partial\mu^2$ \dots
\end{proposition}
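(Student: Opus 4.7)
The plan is to parameterize by the auxiliary variable $z=(1+\gamma(x-\mu)/\sigma)^{-1/\gamma}$, which turns the log-likelihood into the transparent form $\ell(\theta,x) = (\gamma+1)\log z - z - \log\sigma$. In this variable, all partial derivatives of $\ell$ will be finite sums of monomials of the form $(\log z)^{\nu}\,z^{a}\,z^{b\gamma}$ with non-negative integer exponents, weighted by rational functions of $\gamma$ and $\sigma^{-1}$. Direct differentiation yields
\[
\partial_x z = -\frac{z^{1+\gamma}}{\sigma},\quad \partial_\mu z = \frac{z^{1+\gamma}}{\sigma},\quad \partial_\sigma z = \frac{z-z^{1+\gamma}}{\gamma\sigma},\quad \partial_\gamma z = -\frac{z\log z}{\gamma} - \frac{z(1-z^{\gamma})}{\gamma^{2}}.
\]
From these four formulas I read off the key structural property of how $\partial_x,\partial_\mu,\partial_\sigma,\partial_\gamma$ act on a generic monomial $(\log z)^\nu z^a z^{b\gamma}$: each $\partial_x$ or $\partial_\mu$ raises the exponent $b$ by one while fixing $a$ and $\nu$; each $\partial_\sigma$ keeps $a,\nu$ fixed and leaves $b$ unchanged or raises it by one; each $\partial_\gamma$ keeps $a$ fixed, raises $\nu$ by at most one, and leaves $b$ unchanged or raises it by one.

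The second step is an induction on the total order $n$ of differentiation based on the above structural property. The inductive claim is that $\partial_x^{i}\partial_\theta^{n-i}\ell$ is a finite sum of monomials $(\log z)^{\nu} z^{a} z^{b\gamma}$ with $a\in\{0,1\}$, $0\le \nu\le n$, and $i\le b\le n$; the lower bound $b\ge i$ holds because every $x$-derivative raises $b$ by one and no derivative lowers it, while the upper bound $b\le n$ holds because each of the $n$ derivatives raises $b$ by at most one. The preservation of $a\in\{0,1\}$ is the crucial point: inspection of the four derivative rules above shows that each of $\partial_x,\partial_\mu,\partial_\sigma,\partial_\gamma$ leaves the value of $a$ unchanged when acting on a monomial. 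The scalar coefficients are rational in $\gamma$ (with at most a pole at $\gamma=0$) and polynomial in $\sigma^{-1}$; for $\gamma_0\ne 0$ one shrinks $N_0$ so that the coefficients remain uniformly bounded, and for $\gamma_0=0$ the singularities at $\gamma=0$ are removable via the Taylor expansion $z^\gamma = 1+\gamma\log z+\tfrac12\gamma^2(\log z)^2+\cdots$.

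To deduce the pointwise bound, I shrink $N_0$ further so that $|\gamma-\gamma_0|\le \varepsilon/(2n)$; writing $z^{b\gamma}=z^{b\gamma_0}z^{b(\gamma-\gamma_0)}$ and absorbing $|\log z|^\nu\le C\max(z^{-\varepsilon/2},z^{\varepsilon/2})$ into a half-$\varepsilon$ perturbation of the exponent, each monomial is bounded by $C\max(z^{a+b\gamma_0-\varepsilon},z^{a+b\gamma_0+\varepsilon})$. Taking the maximum over $a\in\{0,1\}$ and $i\le b\le n$ separately on $z\le 1$ and $z\ge 1$: the minimum of $a+b\gamma_0$ (relevant for small $z$) is attained at $a=0$ and at $b\in\{i,n\}$ depending on the sign of $\gamma_0$, while the maximum (relevant for large $z$) is attained at $a=1$ and at $b\in\{i,n\}$. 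This collapses the supremum to the four extremal exponents $i\gamma_0-\varepsilon$, $1+i\gamma_0+\varepsilon$, $n\gamma_0-\varepsilon$, $1+n\gamma_0+\varepsilon$, which is the inequality claimed. The main obstacle is the inductive bookkeeping, especially verifying that $\partial_\sigma$ and $\partial_\gamma$ never manufacture a $z^{a+1}$ term: this hinges on the particular $z-z^{1+\gamma}$ structure of $\partial_\sigma z$ and the parallel structure of $\partial_\gamma z$. Once the class of admissible monomials is shown to be preserved, the rest is the elementary maximum estimate above.
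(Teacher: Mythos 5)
Your strategy is sound, and for $\gamma_0\neq 0$ it is essentially complete: the derivative rules for $z$ are correct, the monomial class $(\log z)^{\nu}z^{a}z^{b\gamma}$ with $a\in\{0,1\}$, $i\le b\le n$ is indeed preserved (your observation that $\partial_\sigma$ and $\partial_\gamma$ never raise $a$ is the right structural point), and the final reduction to the four extremal exponents is a valid monotonicity argument. This is organized differently from the paper, which first reduces to the one-parameter function $g(\gamma,y)=(1+\gamma)\log z(\gamma,y)-z(\gamma,y)$ via $\ell(\theta,x)=g(\gamma,(x-\mu)/\sigma)-\log\sigma$, bounds the $(\gamma,y)$-derivatives of $z$ and $g$ in two lemmas, and then recovers the $\mu,\sigma$ derivatives by the chain rule, controlling the extra factor $(x-\mu)/\sigma=(z^{-\gamma}-1)/\gamma$ by $C\max(z^{-\gamma_0\pm\delta},1)$; your direct induction on the three-parameter likelihood avoids that last chain-rule step and is arguably tidier bookkeeping.

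The genuine gap is the case $\gamma_0=0$, which the proposition explicitly covers ($\gamma_0\in\mathbb{R}$) and which the main theorems need. Your monomial decomposition carries coefficients with poles at $\gamma=0$ (the $1/\gamma$, $1/\gamma^2$ factors in $\partial_\sigma z/z$ and $\partial_\gamma\log z$), so the required bound cannot be obtained monomial by monomial uniformly for $\gamma$ in a neighbourhood of $0$: the inductive hypothesis ``sum of monomials with rational coefficients'' has discarded exactly the cancellation information needed there. Saying the singularities are ``removable via the Taylor expansion of $z^\gamma$'' is a pointwise statement in $\gamma$; what you need is a bound uniform in $z\in(0,\infty)$ as well, e.g. that combinations such as $(z^{\gamma}-1)/\gamma$ and $(z^{\gamma}-1-\gamma\log z)/\gamma^{2}$ are $O\bigl(|\log z|^{2}\max(1,z^{\gamma})\bigr)$, hence $O\bigl(\max(z^{-\varepsilon},z^{\varepsilon})\bigr)$ on the shrunken neighbourhood. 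This is precisely the content of the paper's Lemma B.1, which introduces $h(u)=(e^{u}-1-u)/u^{2}$ and proves $h^{(n)}(u)=O(\max(1,e^{u}))$ for all $u\in\mathbb{R}$, and then writes $\partial_\gamma z=z(\log z)^{2}h(\gamma\log z)$ so that the $\gamma\to 0$ degeneracy never appears. To close your argument at $\gamma_0=0$ you must either prove such a uniform bound for the grouped remainders (your series argument can be pushed to give $|\sum_{k\ge 1}\gamma^{k-1}(\log z)^{k}/k!|\le|\log z|e^{|\gamma\log z|}$, but this needs to be stated and then propagated through the induction) or enlarge the inductive class from raw monomials to expressions involving $h(\gamma\log z)$ and its derivatives, as the paper does; as written, the induction does not go through near $\gamma=0$.
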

Note that the constant $C>0$ appearing in Proposition~\ref{prop3} and in the proofs below may change from line to line. Proposition~\ref{prop3} gathers with short notations several different inequalities. For instance, the first inequality with $i=1$ yields 
\[
\left|\frac{\partial \ell}{\partial x} \right|
\leq C\max\left(z^{\gamma_0-\varepsilon},z^{1+\gamma_0+\varepsilon} \right),
\]
while the third inequality with $i=1$ yields
\begin{align*}
\left|\frac{\partial^3 \ell}{\partial x\partial\gamma\partial\sigma} \right|
& \leq C\max\left(z^{\gamma_0-\varepsilon},z^{1+\gamma_0+\varepsilon},z^{3\gamma_0-\varepsilon},z^{1+3\gamma_0+\varepsilon} \right)\\
&\leq\left\{ \begin{array}{lll} 
C\max\left(z^{\gamma_0-\varepsilon},z^{1+3\gamma_0+\varepsilon} \right) &\mbox{if}& \gamma_0\geq 0\\
C\max\left(z^{3\gamma_0-\varepsilon},z^{1+\gamma_0+\varepsilon} \right) &\mbox{if}& \gamma_0\leq 0.
\end{array}\right.
\end{align*}

\medskip
For $\theta=(\gamma,\mu,\sigma)$, we have
\begin{equation}\label{eq:gandl}
\ell(\theta,x)=g\left(\gamma,\frac{x-\mu}{\sigma}\right)-\log\sigma,\quad g(\gamma,x)=-\left(1+\frac{1}{\gamma}\right)\log(1+\gamma x)-(1+\gamma x)^{-1/\gamma}
\end{equation}
where the function $g$ 
is the log-likelihood of the one parameter GEV distribution with $\gamma\in\mathbb{R}$  (i.e. $\mu=0$ and $\sigma=1$). With the notation
\begin{equation}\label{eq:def_z}
 z(\gamma,x)=(1+\gamma x)^{-1/\gamma},\quad 1+\gamma x>0
\end{equation}
we have,
\begin{equation} \label{eq:gand2}
g(\gamma,x)= (1+\gamma)\log z(\gamma,x)-z(\gamma,x).
\end{equation}

The proof of Proposition~\ref{prop3} relies on three lemmas providing upper bounds for the derivatives of $z(\gamma,x)$ and $g(\gamma,x)$. Define $h:\mathbb{R}\to\mathbb{R}$ by 
\begin{equation}\label{eq:def_h}
h(x)=\left\{\begin{array}{cc} (e^x-1-x)/x^2 & \mbox{for } x\neq 0\\  1/2 &\mbox{for } x= 0 \end{array}\right.
\end{equation}
and denote by $h^{(n)}(x)$ its derivative of order $n=0,1,2$.
\begin{lemma}\label{lem:h}
The function $h:\mathbb{R}\to\mathbb{R}$ in \eqref{eq:def_h} 
is twice continuously differentiable and $h^{(n)}(x)=O(\max(1,e^x))$ for all $x\in\mathbb{R}$, and $n=0,1,2$. 
\end{lemma}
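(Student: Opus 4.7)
The plan is to establish a single integral representation that unifies the piecewise definition of $h$ and makes both the smoothness and the growth bound transparent. Specifically, I will show that
\[
h(x) = \int_0^1 (1-t) e^{tx}\, dt \qquad \text{for all } x \in \mathbb{R}.
\]

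First I would verify this identity. For $x \neq 0$, a single integration by parts with $u = 1-t$ and $dv = e^{tx}\, dt$ gives
\[
\int_0^1 (1-t) e^{tx}\, dt = \left[\frac{(1-t) e^{tx}}{x}\right]_0^1 + \int_0^1 \frac{e^{tx}}{x}\, dt = -\frac{1}{x} + \frac{e^x - 1}{x^2} = \frac{e^x - 1 - x}{x^2} = h(x),
\]
while for $x = 0$ the integrand reduces to $1-t$, whose integral over $[0,1]$ equals $1/2 = h(0)$. Thus the representation holds globally and the apparent removable singularity at $x=0$ is handled uniformly.

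Next, since the integrand $(t,x) \mapsto (1-t) e^{tx}$ is $C^\infty$ on $[0,1] \times \mathbb{R}$ with partial derivatives $\partial_x^n[(1-t) e^{tx}] = (1-t) t^n e^{tx}$ continuous and locally bounded, the standard theorem on differentiation under the integral sign applies to all orders. This yields
\[
h^{(n)}(x) = \int_0^1 (1-t) t^n e^{tx}\, dt,
\]
which is continuous in $x$. In particular $h$ is $C^\infty$, hence $C^2$, as required.

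Finally, the growth bound is a direct estimation of the integrand. For $x \ge 0$, $t \in [0,1]$ implies $e^{tx} \le e^x$, so $|h^{(n)}(x)| \le e^x \int_0^1 (1-t) t^n\, dt = \frac{e^x}{(n+1)(n+2)}$. For $x \le 0$, $e^{tx} \le 1$, so $|h^{(n)}(x)| \le \frac{1}{(n+1)(n+2)}$. Combined, $|h^{(n)}(x)| \le \frac{1}{(n+1)(n+2)} \max(1, e^x)$ for every $x \in \mathbb{R}$ and every $n \ge 0$. There is no serious obstacle: the integral representation simultaneously trivialises both the regularity claim (by Leibniz) and the bound (by monotonicity of $t \mapsto e^{tx}$ on $[0,1]$). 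The only point requiring minor care is the value at $x = 0$, which is why the integration by parts is performed for $x \neq 0$ and $h(0) = 1/2$ is checked separately.
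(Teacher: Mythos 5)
Your proof is correct, but it follows a genuinely different route from the paper. The paper establishes smoothness via the power series $h(x)=\sum_{n\geq 0}x^{n}/(n+2)!$ and then obtains the bound $h^{(n)}(x)=O(\max(1,e^x))$ qualitatively, by computing the asymptotic behaviour of $h$, $h'$ and $h''$ separately at $\pm\infty$ and invoking continuity of $x\mapsto |h^{(n)}(x)|/\max(1,e^x)$ with vanishing limits; this gives no explicit constants and requires a separate (if routine) asymptotic computation for each derivative. You instead use the Taylor-remainder representation $h(x)=\int_0^1(1-t)e^{tx}\,dt$, which you verify directly by integration by parts (and separately at $x=0$), and then differentiate under the integral sign to get $h^{(n)}(x)=\int_0^1(1-t)t^ne^{tx}\,dt$. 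This single device yields $C^\infty$ regularity for all orders at once and the explicit, uniform bound $|h^{(n)}(x)|\leq \frac{1}{(n+1)(n+2)}\max(1,e^x)$ by the elementary estimate $e^{tx}\leq \max(1,e^x)$ for $t\in[0,1]$, so your argument is both more economical and slightly stronger (explicit constants, arbitrary $n$) than what the paper needs for $n=0,1,2$.
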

\begin{proof}[Proof of Lemma~\ref{lem:h}]
The function $h$ can be represented as the power series $h(x)=\sum_{n\geq 0}x^{n}/(n+2)!$ and is hence indefinitely continuously differentiable. From the asymptotic behaviour
\[
h(x)\sim  -\frac{1}{x} \mbox{ as }  x\to -\infty\quad ,\quad h(x)\sim  \frac{e^x}{x^2}\mbox{ as } x\to +\infty,
\]
we deduce that the function $x\mapsto |h(x)|/\max(1,e^x)$ is bounded on $\mathbb{R}$ since it is continuous with vanishing limits at $\pm\infty$. This proves the existence of $C>0$ such that $|h(x)|\leq C\max(1,e^x)$ for all $x\in\mathbb{R}$. The upper bound for the second and third derivatives is proved similarly since simple computations show that
\[
h'(x)\sim \frac{1}{x^2} \mbox{ as }  x\to -\infty\quad ,\quad h'(x)\sim  \frac{e^x}{x^2}\mbox{ as } x\to +\infty
\]
and
\[
h''(x)\sim -\frac{2}{x^3} \mbox{ as } x\to -\infty\quad ,\quad h''(x)\sim  \frac{e^x}{x^2}\mbox{ as } x\to +\infty.
\]
\end{proof}

From Lemma \ref{lem:h} it follows
\begin{equation}\label{eq:bound_hz}
\left|h^{(n)}(\gamma\log z)\right|=O\left(\max(1,z^\gamma\right), \quad n=0,1,2.
\end{equation}
We also  use throughout the elementary bound $|\log z|^k=O\left(z^{\pm\delta}\right)$, for all $k\in\mathbb{N}$ and $\delta>0$.
\begin{lemma}\label{lem:z}
Consider the function $z(\gamma,x)$ defined by Equation~\eqref{eq:def_z}.
For all $\gamma_0\in\mathbb{R}$ and $\varepsilon>0$, there exists $C>0$ and $\delta>0$  such that, for all $\gamma\in(\gamma_0-\delta,\gamma_0+\delta)$ and $1+\gamma x>0$,
\begin{eqnarray*}
\left|\frac{\partial z}{\partial x^{i}\partial \gamma^{1-i}} \right|&\leq& C\max\left(z^{1+i\gamma_0\pm\varepsilon},z^{1+\gamma_0\pm\varepsilon} \right),\quad i=0,1,\\
\left|\frac{\partial^2 z}{\partial x^{i}\partial \gamma^{2-i}} \right|&\leq& C\max\left(z^{1+i\gamma_0\pm\varepsilon},z^{1+2\gamma_0\pm\varepsilon} \right),\quad i=0,1,2,\\
\left|\frac{\partial^3 z}{\partial x^{i}\partial \gamma^{3-i}}\right|&\leq& C\max\left(z^{1+i\gamma_0\pm\varepsilon},z^{1+3\gamma_0\pm\varepsilon} \right)\quad i=0,1,2,3,
\end{eqnarray*}
\end{lemma}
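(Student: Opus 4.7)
The plan is to combine direct differentiation with the bound on $h^{(n)}(\gamma\log z)$ from \eqref{eq:bound_hz} and the absorbing rule $|\log z|^k=O(z^{\pm\delta})$ for any $\delta>0$. First I treat pure $x$-derivatives: direct computation yields
\[
\frac{\partial z}{\partial x}=-z^{1+\gamma},\qquad \frac{\partial^2 z}{\partial x^2}=(1+\gamma)z^{1+2\gamma},\qquad \frac{\partial^3 z}{\partial x^3}=-(1+\gamma)(1+2\gamma)z^{1+3\gamma},
\]
so for $\gamma$ in a small neighbourhood of $\gamma_0$, the $i$-th $x$-derivative is bounded by $C\, z^{1+i\gamma}\leq C\max(z^{1+i\gamma_0-\varepsilon},z^{1+i\gamma_0+\varepsilon})$, which already settles the case where every differentiation is in $x$.

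For $\gamma$-derivatives, the key identity comes from logarithmic differentiation. Setting $u=\gamma\log z=-\log(1+\gamma x)$ and using the algebraic identity $\log(1+y)-y/(1+y)=e^u-u-1=u^2 h(u)$ with $y=\gamma x$, one obtains the two basic relations
\[
\frac{\partial \log z}{\partial x}=-z^\gamma,\qquad \frac{\partial \log z}{\partial \gamma}=(\log z)^2\, h(\gamma\log z),
\]
and hence $\partial z/\partial \gamma=z(\log z)^2 h(\gamma\log z)$. Combining with $|h(\gamma\log z)|=O(\max(1,z^\gamma))$ from \eqref{eq:bound_hz} and $(\log z)^2=O(z^{\pm\varepsilon})$ yields the claim at order $k=1$, $i=0$.

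For the higher orders I would proceed by induction on the differentiation order $k\leq 3$, showing that every partial derivative of $z$ is a finite sum of terms of the form
\[
z^{1+j\gamma}(\log z)^p\, P(\gamma)\, h^{(n_1)}(\gamma\log z)\cdots h^{(n_r)}(\gamma\log z),
\]
with $P$ polynomial and $j,p,n_\ell,r$ bounded non-negative integers. This class is stable under both basic derivatives. Indeed, $\partial/\partial x$ acts on $z^{1+j\gamma}$ by multiplication by $-(1+j\gamma)z^\gamma$ and on each $h^{(n)}(\gamma\log z)$ by multiplication by $-\gamma z^\gamma h^{(n+1)}(\gamma\log z)/h^{(n)}(\gamma\log z)$, so it multiplies each summand by at most one extra factor $z^\gamma$; $\partial/\partial\gamma$ leaves the power of $z$ unchanged, replacing $z^{1+j\gamma}$ by $j(\log z) z^{1+j\gamma}$ and acting on $h^{(n)}(\gamma\log z)$ by an additional factor $\log z+\gamma(\log z)^2 h(\gamma\log z)$ together with $h^{(n+1)}(\gamma\log z)$. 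Bounding each $|h^{(n_\ell)}|$ by $\max(1,z^\gamma)$ and absorbing $|(\log z)|^p$ as $O(z^{\pm\varepsilon})$, any derivative of total order $k$ with $i$ in $x$ satisfies
\[
\Bigl|\frac{\partial^k z}{\partial x^i\partial\gamma^{k-i}}\Bigr|\ \leq\ C\, z^{1+i\gamma}\max(1,z^\gamma)^{k-i}z^{\pm\varepsilon}\ \leq\ C\max\bigl(z^{1+i\gamma_0\pm\varepsilon},\, z^{1+k\gamma_0\pm\varepsilon}\bigr),
\]
after replacing $\gamma$ by $\gamma_0+O(\delta)$. This is exactly the content of the three stated inequalities.

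The main technical obstacle is the combinatorial bookkeeping of the $h^{(\cdot)}$ factors produced by iterated Leibniz and chain rules, since each $\partial/\partial\gamma$ can introduce a product of two $h^{(\cdot)}$'s via the derivative of $(\log z)^2 h(\gamma\log z)$. The stability of the above class under $\partial/\partial x$ and $\partial/\partial\gamma$ makes the induction routine, and since at most $k\leq 3$ derivatives are required, the number of summands to be checked remains finite and the argument closes.
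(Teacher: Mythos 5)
Your proof is correct and follows essentially the same route as the paper's: the same identities $\partial z/\partial x=-z^{1+\gamma}$ and $\partial z/\partial\gamma=z(\log z)^2h(\gamma\log z)$, the bound $|h^{(n)}(\gamma\log z)|=O(\max(1,z^{\gamma}))$ from Lemma~\ref{lem:h}, and the absorption of powers of $\log z$ into $z^{\pm\varepsilon}$. The only difference is organizational: the paper writes out the first- and second-order derivatives explicitly and omits the third-order details, whereas your induction over the class of terms $z^{1+j\gamma}(\log z)^{p}P(\gamma)\prod_{\ell}h^{(n_\ell)}(\gamma\log z)$, stable under both $\partial/\partial x$ and $\partial/\partial\gamma$, treats all orders uniformly --- a more systematic bookkeeping of the same computation.
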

\begin{proof}[Proof of Lemma~\ref{lem:z}]
Recall the definition~\eqref{eq:def_h} of the function $h$. The first order partial derivatives of $z$ equal 
\[
\frac{\partial z}{\partial x}= -z^{1+\gamma} \quad \mbox{and}\quad
\frac{\partial z}{\partial \gamma}= z (\log z)^2 h(\gamma\log z).
\]
Assuming $\gamma\in(\gamma_0-\delta,\gamma_0+\delta)$, we deduce 
\begin{equation}\label{eq:diff_z_1.1}
\left|\frac{\partial z}{\partial x}\right|\leq  C\max(z^{1+\gamma_0\pm\delta})\text{ and }\left|\frac{\partial z}{\partial \gamma}\right|\leq  C\max(z^{1\pm\delta},z^{1+\gamma_0\pm 2\delta})
\end{equation}
from which the bounds of the first order partial derivatives of $z$ follow.
The second order partial derivatives of $z$ are given by
\begin{eqnarray*}
\frac{\partial^2 z}{\partial x^2}&=& -(1+\gamma)z^\gamma \frac{\partial z}{\partial x}\\
\frac{\partial^2 z}{\partial x\partial \gamma}&=& -(1+\gamma)z^\gamma \frac{\partial z}{\partial \gamma}-(\log z)z^{1+\gamma}\\
\frac{\partial^2 z}{\partial \gamma^2}&=&\left\{(\log z+2)(\log z) h(\gamma\log z)+\gamma(\log z)^2h'(\gamma\log z)\right\}\frac{\partial z}{\partial \gamma}+z(\log z)^3h'(\gamma \log z).
\end{eqnarray*}
Combined with the previous bounds and \eqref{eq:bound_hz}, we obtain, for $\gamma\in(\gamma_0-\delta,\gamma_0+\delta)$,
\begin{eqnarray*}
\left|\frac{\partial^2 z}{\partial x^2}\right|&\leq& C\max(z^{1+2\gamma_0\pm 2\delta})\\
\left|\frac{\partial^2 z}{\partial\gamma \partial x}\right|&\leq& C\max(z^{1+\gamma_0\pm 2\delta},z^{1+2\gamma_0\pm 2\delta})
\\
\left|\frac{\partial^2 z}{\partial \gamma^2}\right|&\leq&C \max(z^{1\pm 2\delta},z^{1+2\gamma_0\pm 4\delta})
\end{eqnarray*}
from which the bounds for the second order partial derivatives of $z$ follow.
The case of third order derivatives can be dealt with similarly and we omit the details.
\end{proof}

\begin{lemma}\label{lemg2}
Consider the function $g(\gamma,x)$ defined by Equation~\eqref{eq:gand2}. For all $\gamma_0\in\mathbb{R}$ and $\varepsilon>0$, there exists $C>0$ and $\delta>0$ such that, for all $\gamma\in (\gamma_0-\delta,\gamma_0+\delta)$ and $1+\gamma x>0$,
\begin{eqnarray*}
\left|\frac{\partial g}{\partial x^{i}\partial \gamma^{1-i}} \right|&\leq& C\max\left(z^{i\gamma_0-\varepsilon},z^{1+i\gamma_0+\varepsilon},z^{\gamma_0-\varepsilon},z^{1+\gamma_0+\varepsilon} \right),\quad i=0,1,\\
\left|\frac{\partial^2 g}{\partial x^{i}\partial \gamma^{2-i}} \right|&\leq& C\max\left(z^{i\gamma_0-\varepsilon},z^{1+i\gamma_0+\varepsilon},z^{2\gamma_0-\varepsilon},z^{1+2\gamma_0+\varepsilon} \right),\quad i=0,1,2,\\
\left|\frac{\partial^3 g}{\partial x^{i}\partial \gamma^{3-i}} \right|&\leq& C\max\left(z^{i\gamma_0-\varepsilon},z^{1+i\gamma_0+\varepsilon},z^{3\gamma_0-\varepsilon},z^{1+3\gamma_0+\varepsilon} \right),\quad i=0,1,2,3.
\end{eqnarray*}
\end{lemma}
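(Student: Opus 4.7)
My plan is a direct computational reduction to Lemma~\ref{lem:z}, entirely parallel to (and building on) the proof of that lemma. Writing $g = (1+\gamma)\log z - z$ with $z = z(\gamma,x)$, I would show that each partial derivative of $g$ of order $\le 3$ is a finite sum of ``elementary'' terms of the form
$$P(\gamma)\, z^{-k}\, (\log z)^{\ell}\, h^{(j)}(\gamma\log z)\, \prod_{r} \frac{\partial^{\alpha_r+\beta_r} z}{\partial x^{\alpha_r}\partial\gamma^{\beta_r}},$$
where $P$ is a polynomial, $k,\ell,j \geq 0$ are integers, and $\sum_r (\alpha_r+\beta_r) \leq n$. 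Lemma~\ref{lem:z} controls each factor $\partial^{\alpha_r+\beta_r} z$; the $(\log z)^\ell$ factors are absorbed by the elementary bound $|\log z|^\ell = O(z^{\pm\varepsilon})$ used repeatedly in the proof of Lemma~\ref{lem:z}; and the $h^{(j)}$ factors by the bound $|h^{(j)}(\gamma \log z)| = O(\max(1, z^{\gamma}))$ recorded in~\eqref{eq:bound_hz}.

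For the pure $x$-derivatives ($i = n$) the computation is especially clean: since $\partial_x z = -z^{1+\gamma}$, a straightforward induction shows that $\partial^n g/\partial x^n$ is a polynomial in $z^\gamma$ whose coefficients are polynomial in $(1+\gamma)$, and every monomial has exponent of the form $j\gamma$ or $1+j\gamma$ for some $1 \leq j \leq n$. For $\gamma$ in a small neighbourhood of $\gamma_0$, such monomials are dominated by $C\max(z^{n\gamma_0-\varepsilon}, z^{1+n\gamma_0+\varepsilon})$, which is already consistent with (in fact slightly stronger than) the claim.

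For the mixed and pure $\gamma$-derivatives I would proceed by induction on the total order $n$. The class of elementary terms above is closed under $\partial_x$ (which multiplies each factor by a bounded polynomial times $z^\gamma$, or inserts a new $\partial_x z$ factor) and under $\partial_\gamma$ (which either differentiates $P$ or $\log z$, or introduces a new $\partial_\gamma z$ factor with a $(\log z)^2 h$-type correction). At each step the two extreme exponents of $z$ that appear are $i\gamma_0$ (when the differentiations act preferentially on the $(1+\gamma)\log z$ piece so that the $i$ $x$-derivatives produce $i$ factors of $z^\gamma$) and $1+n\gamma_0$ (when they act on the $-z$ piece or on the deepest $z$-derivative); every intermediate exponent $j\gamma_0$ or $1+j\gamma_0$ with $i < j < n$ lies between these two and is therefore dominated, after adjusting $\varepsilon$ and shrinking $\delta$, by the max on the right-hand side of the claim.

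The main obstacle is not any individual estimate but the bookkeeping: a naive Leibniz expansion produces many subterms, and the statement of the lemma allows a max of four different power-of-$z$ bounds precisely because it must absorb all intermediate contributions. The cleanest way to close the argument is to formalise the notion of ``admissible term'' sketched above, verify that $\partial_x$ and $\partial_\gamma$ preserve admissibility while shifting exponents by at most $\gamma_0 \pm \varepsilon$, and then apply Lemma~\ref{lem:z} and~\eqref{eq:bound_hz} to the finite list of elementary terms generated at orders $n = 1, 2, 3$.
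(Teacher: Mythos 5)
Your proposal is correct and follows essentially the same route as the paper: express the partial derivatives of $g=(1+\gamma)\log z-z$ through the derivatives of $z$, then invoke Lemma~\ref{lem:z}, the bound \eqref{eq:bound_hz} on $h^{(j)}(\gamma\log z)$, and the elementary absorption $|\log z|^{\ell}=O(z^{\pm\varepsilon})$; the paper simply writes out the resulting formulas for orders one and two explicitly rather than formalising an ``admissible term'' induction. The only small imprecision is your claim that intermediate exponents lie ``between $i\gamma_0$ and $1+n\gamma_0$'', which is the correct ordering only for $\gamma_0\geq 0$; for $\gamma_0<0$ the extremes are $n\gamma_0$ and $1+i\gamma_0$, but the four-term maximum in the statement covers both signs, exactly as the paper notes when handling the $\pm\varepsilon$ adjustments.
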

\begin{proof}[Proof of Lemma~\ref{lemg2}]
Equation \eqref{eq:gand2} expressing $g(\gamma,x)$ in terms of $z(\gamma,x)$ entails
\begin{equation*}
\frac{\partial g}{\partial x}= \left(\frac{1+\gamma}{z}-1\right)\frac{\partial z}{\partial x}\quad,\quad
\frac{\partial g}{\partial \gamma}= \left(\frac{1+\gamma}{z}-1\right)\frac{\partial z}{\partial \gamma}+\log z.
\end{equation*}
Using the upper bound for the first derivatives of $z(\gamma,x)$ in Lemma~\ref{lem:z}, we get, for $\gamma\in(\gamma_0-\delta,\gamma_0+\delta)$,
\begin{equation*}
\left|\frac{\partial g}{\partial x}\right|\leq C\max(z^{\gamma_0\pm \varepsilon},z^{1+\gamma_0\pm \varepsilon}) \quad,\quad
\left|\frac{\partial g}{\partial \gamma}\right|\leq C\max(z^{\pm \varepsilon},z^{1\pm \varepsilon},z^{\gamma_0\pm \varepsilon},z^{1+\gamma_0\pm \varepsilon}) .
\end{equation*}
This proves the upper bounds for the first derivatives of $g(\gamma,x)$ given in Lemma~\ref{lemg2}. Note that we can handle the $\pm$ sign since we have $-\varepsilon<\varepsilon<1-\varepsilon<1+\varepsilon$, for small $\varepsilon>0$. When considering the maximum of the power functions, only the extreme exponents $-\varepsilon<1+\varepsilon$ matter. A similar argument holds for $\gamma_0-\varepsilon<\gamma_0+\varepsilon<1+\gamma_0-\varepsilon<1+\gamma_0+\varepsilon$.

Similarly for the second order derivatives of $g(\gamma,x)$, the upper bounds are derived from the similar upper bounds for the partial derivatives of $z(\gamma,x)$ in Lemma~\ref{lem:z} (with $\varepsilon$ replaced by $\varepsilon/2$) together with the formulas
\begin{eqnarray*}
\frac{\partial^2 g}{\partial x^2}&=& \left(\frac{1+\gamma}{z}-1\right)\frac{\partial^2 z}{\partial x^2}-\frac{1+\gamma}{z^2}\left(\frac{\partial z}{\partial x}\right)^2\\
\frac{\partial^2 g}{\partial x\partial \gamma}&=& \left(\frac{1+\gamma}{z}-1\right)\frac{\partial^2 z}{\partial x\partial \gamma}
+\frac{1}{z}\frac{\partial z}{\partial x}-\frac{1+\gamma}{z^2} \frac{\partial z}{\partial x}\frac{\partial z}{\partial \gamma}\\
\frac{\partial^2 g}{\partial \gamma^2}&=& \left(\frac{1+\gamma}{z}-1\right)\frac{\partial^2 z}{\partial \gamma^2}
+\frac{2}{z}\frac{\partial z}{\partial \gamma}-\frac{1+\gamma}{z^2}\left(\frac{\partial z}{\partial \gamma}\right)^2.
\end{eqnarray*}
Partial derivatives of order $3$ are dealt with similarly. 
\end{proof}

\begin{proof}[Proof of Proposition~\ref{prop3}]
Equation \eqref{eq:gandl} implies that the partial derivatives of $\ell$ are closely related to those of $g$. For the first derivatives, we have
\begin{multicols}{2}
\noindent
\begin{eqnarray*}
\frac{\partial\ell}{\partial x}(\theta,x)&=& \frac{1}{\sigma}\frac{\partial g}{\partial x}\left(\gamma,\frac{x-\mu}{\sigma}\right)\\
\frac{\partial\ell}{\partial \gamma}(\theta,x)&=& \frac{\partial g}{\partial \gamma}\left(\gamma,\frac{x-\mu}{\sigma}\right)
\end{eqnarray*}
\begin{eqnarray*}
\frac{\partial\ell}{\partial \mu}(\theta,x)&=& -\frac{1}{\sigma}\frac{\partial g}{\partial x}\left(\gamma,\frac{x-\mu}{\sigma}\right)\\
\frac{\partial\ell}{\partial \sigma}(\theta,x)&=& -\frac{x-\mu}{\sigma^2}\frac{\partial g}{\partial x}\left(\gamma,\frac{x-\mu}{\sigma}\right)-\frac{1}{\sigma}.
\end{eqnarray*}
\end{multicols}
\noindent
These equalities together with Lemma~\ref{lemg2} yield
\[
\left|\frac{\partial\ell}{\partial x}\right|= \frac{1}{\sigma}\left|\frac{\partial g}{\partial x}\right| \leq C\max\left(z^{\gamma_0-\varepsilon},z^{1+\gamma_0+\varepsilon}\right),
\]
which corresponds to the first inequality in Proposition~\ref{prop3} with $i=0$. We also obtain 
\[
\left|\frac{\partial\ell}{\partial \gamma}\right|= \left|\frac{\partial g}{\partial \gamma}\right| \leq C\max\left(z^{-\varepsilon},z^{1+\varepsilon},z^{\gamma_0-\varepsilon},z^{1+\gamma_0+\varepsilon}\right)
\quad,\quad
\left|\frac{\partial\ell}{\partial \mu}\right| = \frac{1}{\sigma}\left|\frac{\partial g}{\partial x}\right| \leq C\max\left(z^{\gamma_0-\varepsilon},z^{1+\gamma_0+\varepsilon}\right)
\]
which implies the inequality in Proposition~\ref{prop3} with $i=1$ and the derivatives taken with respect to $\gamma$ and $\mu$ respectively. The case of the derivative with respect to $\sigma$ is slightly more difficult: we use the inequality
\[
\left|\frac{x-\mu}{\sigma}\right|=\left|\frac{z^{-\gamma}-1}{\gamma}\right|\leq C\max(z^{-\gamma_0- \delta},z^{-\gamma_0+ \delta},1),\quad \gamma\in(\gamma_0-\delta,\gamma_0+\delta),
\]
which implies
\begin{eqnarray*}
\left|\frac{\partial\ell}{\partial \sigma}\right|&=&  
\frac{1}{\sigma}\left|\frac{z^{-\gamma}-1}{\gamma}\frac{\partial g}{\partial x}+1\right| 
\leq C\max(z^{-\gamma_0- \delta},z^{-\gamma_0+ \delta},1)\max\left(z^{\gamma_0-\varepsilon},z^{1+\gamma_0+\varepsilon}\right) \\
& \leq& C\max(z^{-\varepsilon'},z^{1+\varepsilon'},z^{\gamma_0-\varepsilon'},z^{1+\gamma_0+\varepsilon'})
\end{eqnarray*}
for sufficiently small $\varepsilon'$. For the first inequality, we use Lemma~\ref{lemg2}. 
This proves the first inequality in Proposition~\ref{prop3} with $i=1$ and the derivatives taken with respect to $\sigma$.

The case of second order derivatives is dealt similarly with the relations
\begin{multicols}{2}
\noindent
\begin{eqnarray*}
\frac{\partial^2\ell}{\partial x^2}&=& \frac{1}{\sigma^2}\frac{\partial^2 g}{\partial x^2}\\
\frac{\partial^2\ell}{\partial \gamma\partial x}&=& \frac{1}{\sigma}\frac{\partial^2 g}{\partial \gamma\partial x}\\
\frac{\partial^2\ell}{\partial \mu\partial x}&=& -\frac{1}{\sigma^2}\frac{\partial^2 g}{\partial x^2}\\
\frac{\partial^2\ell}{\partial \sigma\partial x}&=& -\frac{x-\mu}{\sigma^3}\frac{\partial^2 g}{\partial x^2}\\
\frac{\partial^2\ell}{\partial \gamma^2}&=& \frac{\partial^2 g}{\partial \gamma^2}\\
\end{eqnarray*}
\begin{eqnarray*}
\frac{\partial^2\ell}{\partial \mu^2}&=& \frac{1}{\sigma^2}\frac{\partial^2 g}{\partial x^2}\\
\frac{\partial^2\ell}{\partial \sigma^2}&=& 2\frac{x-\mu}{\sigma^3}\frac{\partial g}{\partial x}+\frac{(x-\mu)^2}{\sigma^4}\frac{\partial^2 g}{\partial x^2}+\frac{1}{\sigma^2}\\
\frac{\partial^2\ell}{\partial\gamma\partial \mu}&=& -\frac{1}{\sigma}\frac{\partial^2 g}{\partial \gamma\partial x}\\
\frac{\partial^2\ell}{\partial\gamma\partial \sigma}&=& -\frac{x-\mu}{\sigma^2}\frac{\partial^2 g}{\partial \gamma\partial x}\\
\frac{\partial^2\ell}{\partial \mu\partial\sigma}&=& \frac{1}{\sigma^2}\frac{\partial g}{\partial x}+\frac{x-\mu}{\sigma^3}\frac{\partial^2 g}{\partial x^2}.
\end{eqnarray*}
\end{multicols}
Using these relations, the second inequality in Proposition~\ref{prop3} follows from Lemma~\ref{lemg2}. Checking all the different cases is relatively tedious but elementary. Details are omitted. Similar formulas hold for derivatives of order $3$ and the resulting bounds have been checked with Mathematica$^\circledR$.
\end{proof}

\section*{Acknowledgements} 
The authors would like to thank Laurens de Haan for his useful suggestions and support. 

Research partially funded by VolkswagenStiftung Support for Europe -- WEX-MOP and, FCT - Fundac\~ao para a Ci\^encia e a Tecnologia, Portugal, through UID/MAT/00006/2013 and UID/Multi/04621/2013.

\end{document}